\begin{document}

\title[Random growth models]{Random growth models: shape and convergence rate}


\author{Michael Damron}
\address{School of Mathematics, Georgia Institute of Technology, 686 Cherry St., Atlanta, GA, USA\afterpage{\blankpage}}
\email{mdamron6@gatech.edu}
\thanks{This work was done with support from an NSF CAREER grant.}


\subjclass[2010]{60K35, 60K37, 82B43}

\date{\today}

\begin{abstract}
Random growth models are fundamental objects in modern probability theory, have given rise to new mathematics, and have numerous applications, including tumor growth and fluid flow in porous media. In this article, we introduce some of the typical models and the basic analytical questions and properties, like existence of asymptotic shapes, fluctuations of infection times, and relations to particle systems. We then specialize to models built on percolation (first-passage percolation and last-passage percolation) and give a self-contained treatment of the shape theorem, the subadditive ergodic theorem, and conjectured and proven properties of asymptotic shapes. We finish by discussing the rate of convergence to the limit shape, along with definitions of scaling exponents and a sketch of the proof of the KPZ scaling relation.
\end{abstract}

\maketitle


\tableofcontents

\section{Introduction}

\subsection{Some typical growth models}

The typical setting for a random growth model is as follows. We imagine an infection sitting at a vertex $v$ of a connected graph $G = (V,E)$ with vertex set $V$ and edge set $E$. The infection spreads along the edges of the graph according to some random rules, and each vertex of the graph is eventually infected. The infection takes time $T(v,w) < \infty$ to infect a vertex $w$, and at time $t$, a set of vertices
\[
B(t) = \{x \in V : T(v,x) \leq t\}
\]
is infected. \smallskip

\index{Eden model}%
{\bf Eden model.} One of the simplest examples is the Eden model, introduced by Eden \cite{Eden} in '61, and gives a simplified version of cell reproduction. Eden considered the two dimensional square lattice $\mathbb{Z}^2$ with its nearest-neighbor edges, but we could consider any graph, and we will generally take the $d$-dimensional cubic lattice 
\[
\mathbb{Z}^d = \{x = (x_1, \ldots, x_d) : x_i \in \mathbb{Z} \text{ for all }i\}
\]
with the nearest-neighbor edges
\[
\mathcal{E}^d = \{\{x,y\} : x,y \in \mathbb{Z}^d, \|x-y\|_1=1\},
\]
and $\|x\|_1 = \sum_i |x_i|$ is the $\ell^1$-norm. We begin with a cell occupying the site $0 = (0, \ldots, 0)$, so our occupied set at time $0$ is $S_0=\{0\}$. At time $n \geq 1$, we consider the edge boundary of $S_{n-1}$
\[
\partial_e S_{n-1} = \{\{x,y\} \in \mathcal{E}^d : x \in S_{n-1},y \notin S_{n-1}\}
\]
and select some edge $e_n = \{x_n,y_n\}$ uniformly at random from it. Last, put
\[
S_n = S_{n-1} \cup \{x_n,y_n\}.
\]
Thus, in the Eden model, our cluster of cells $S_{n-1}$ replicates on the boundary (uniformly at random) and produces a new cell directly outside of $S_{n-1}$ to form our new cluster $S_n$. It turns out that $\cup_n S_n$ is all of $\mathbb{Z}^d$, and so the infection time from $0$ to $x$ (the first value of $n$ such that $S_n$ contains $x$) is finite. The Eden model can be rephrased in a larger framework, first-passage percolation (FPP), which we will meet soon. As a consequence, one can show a ``shape theorem'' for $S_n$: after proper scaling, the set $S_n$ approaches a limiting shape. Although our selection rule appears not to bias any direction, the limiting shape is expected not to be rotationally invariant (that is, it is expected not to be a Euclidean ball). This statement is proved for high dimensions (see \cite{aspects, AT} and the references in Section~\ref{sec: high_dimensions_FPP} below).\smallskip

\index{diffusion-limited aggregation}%
{\bf Diffusion-limited aggregation.} In the Eden model, we select a boundary vertex $y_n$ by picking a boundary edge uniformly at random. We could change this selection rule, and pick $y_n$ according to some other distribution. In diffusion-limited-aggregation (DLA), introduced in \cite{WS}, we select $y_n$ according to the ``harmonic measure from infinity.'' Roughly speaking, this corresponds to the hitting distribution of a random walk on the boundary of $S_n$ started from a far away point. In this way we again obtain a sequence of growing sets $(S_n)$. Since the DLA model is notoriously hard to analyze, a simplified version, internal DLA (IDLA) was introduced in \cite{MD}, where the selection rule is more straightforward. We begin again with $S_0 = \{0\}$. At time $n$, we run a simple symmetric random walk started at 0, and set $y_n$ (our selected vertex) to be the first vertex adjacent to $S_{n-1}$ that the random walk touches. Precisely, at time $n$, we let $X_1^{(n)}, X_2^{(n)}, \ldots$ be an i.i.d. sequence of random vectors taking values
\[
\mathbb{P}(X_i^{(n)} = \pm e_j) = 1/(2d) \text{ for } i \geq 1,~ j =1, \ldots, d,
\]
where $(e_j)$ are the $j$ standard basis vectors of $\mathbb{R}^d$, set 
\[
Y_0^{(n)} = 0, \text{ and } Y_i^{(n)} = \sum_{k=1}^i X_k^{(n)} \text{ for } i \geq 1,
\]
and let $y_n$ be the first element in the sequence $Y_0^{(n)}, Y_1^{(n)}, Y_2^{(n)}, \ldots$ that is not in $S_{n-1}$. There is no shape theorem proved for DLA, but there is one for IDLA \cite{LBG} (convergence to a Euclidean ball), and even the rate of convergence to the shape is known \cite{JLS}.\smallskip

\index{first-passage percolation (FPP)}%
\index{FPP}%
{\bf First-passage percolation.} FPP can be seen as a generalization of the Eden model, and was introduced \cite{HW} by Hammersley and Welsh in '65. In FPP, the infection spreads across edges according to explicit speeds. We let $(t_e)_{e \in \mathcal{E}^d}$ be a collection of i.i.d. nonnegative random variables. The variable $t_e$ is thought of as the passage time of an edge; that is, the amount of time it takes for an infection to cross the edge. A path $\Gamma$ is a sequence of edges $e_0, \ldots, e_n$ such that each pair $e_i$ and $e_{i+1}$ shares an endpoint, and the passage time of such a $\Gamma$ is $T(\Gamma) = \sum_{e \in \Gamma}t_e$. The infection takes the path of minimal passage time, so we set the infection time, or passage time, from $x$ to $y$, vertices in $\mathbb{Z}^d$, to be
\[
T(x,y) = \inf_{\Gamma : x \to y} T(\Gamma),
\]
where the infimum is over paths $\Gamma$ starting at $x$ and ending at $y$. (It is known that under general assumptions, for instance if $\mathbb{P}(t_e=0)<p_c$, where $p_c$ is the $d$-dimensional bond percolation threshold, then there is a unique minimizing path --- a geodesic --- from $x$ to $y$. However, for some distributions with $\mathbb{P}(t_0=0)=p_c$ in dimensions $d \geq 3$, existence of geodesics is unknown.) In FPP, there is a shape theorem, but the limiting shape depends on the distribution of the $(t_e)$'s. Very little is known about the limit shapes for various distributions, apart from them being convex, compact, and having the symmetries of $\mathbb{Z}^d$. As we will see, it is expected that for most distributions, the 
\index{limit shape}%
limit shape is 
\index{strict convexity}%
strictly convex, and certainly not a polygon, but strict convexity is not proved for any distribution, and there are only some two-dimensional examples of limit shapes that are not polygons. For a recent survey on FPP, see \cite{ADH15}.

If the weights $(t_e)$ has (rate 1) exponential distribution, the evolution of the ball $B(t)$ as $t$ grows can be shown to be exactly the same as the evolution of the sets $(S_n)$ in the Eden model. More precisely, using the ``memoryless property'' of the exponential distribution, one can show that the growth of $B(t)$ is the same as in the following algorithm: begin with $B(0) = \{0\}$ and assign i.i.d. exponential random variables to the edges in the set $\partial_e B(0)$. If $\tau_1$ is the minimum of these variables, and it is assigned to edge $e = \{x,y\}$, then set $B(t) = B(0)$ for $t \in [0,\tau_1)$ and $B(\tau_1) = B(0) \cup\{x,y\}$. Next, generate new i.i.d. exponential random variables assigned to the edges in $\partial_e B(\tau_1)$, and set $\tau_2'$ to be the minimum of these variables, with $\tau_2 = \tau_1 + \tau_2'$. Once again, set $B(t) = B(\tau_1)$ for $t \in [\tau_1,\tau_2)$ and $B(\tau_2) = B(\tau_1) \cup \{x',y'\}$, where $\{x',y'\}$ is the edge in $\partial_e B(\tau_1)$ with minimal weight. We continue, and at each step, we sample i.i.d. exponentials for the boundary edges of our current set, choose the minimal weight edge (of weight $\tau$), and add its endpoints into our set after we wait for time $\tau$. Since the location of the minimum is uniformly distributed on the boundary, the sequence of sets $B(0), B(\tau_1), B(\tau_2), \ldots$ has the same distribution as the sequence of sets in the Eden model. There is no such representation of the DLA models in terms of FPP.\smallskip

\index{last-passage percolation (LPP)}%
\index{LPP}%
{\bf Last-passage percolation.} LPP is a modification of FPP, introduced because of its relationship to the TASEP particle system. The typical setting is $\mathbb{Z}^d$, and one places i.i.d. nonnegative random variables (weights) $(t_v)_{v \in \mathbb{Z}^d}$ on the vertices. A path $\Gamma$ is a sequence of vertices $v_0, \ldots, v_n$ such that $\|v_i-v_{i+1}\|_1 = 1$ for all $i$, and one assigns the passage time $T(\Gamma) = \sum_{v \in \Gamma} t_v$, as in FPP. The difference now is that in LPP we define the passage time between two vertices as the maximal passage time of any path between them. Of course this will generally be infinity unless we restrict ourselves to a finite set of paths, so we consider oriented paths; that is, paths such that all the coordinates of the $v_i$'s are nondecreasing (written $v_i \leq v_{i+1}$). So for any $v \leq w$, we set the infection time $T(v,w)$ to be the maximal passage time of any oriented path rom $v$ to $w$. Once again there is a shape theorem; however, unlike in FPP, the limiting shape is generally compact but not convex. In two dimensions, it is believed that the boundary of the limit shape is the graph of a strictly concave function, but this is again not known. In LPP, however, it is known that the limit shape is not a polygon. For a survey of LPP, see \cite{Martin}.

\subsection{Main questions}

As we mentioned many times in the last section, a fundamental object of study in random growth models is the limit shape. In a certain sense to be described in the next section, we will have almost surely $B(t)/t \to \mathcal{B}$, where $\mathcal{B}$ is the limit shape and $B(t)$ is the set of infected sites at time $t$. Directly related to the limit shape are the following two questions.
\begin{enumerate}
\item {\bf Set of limit shapes.} What does the limit shape look like? Are there explicit descriptions? Is it a Euclidean ball? Is it the $\ell^1$ or $\ell^\infty$ ball? In models (like FPP and LPP) where many limit shapes can arise, what is the collection of all possible limit shapes? What is the dependence in FPP and LPP of the limit shape on the weight distribution?
\item {\bf Convergence to the limit.} What is the convergence rate to the limit shape? Precisely, what is the set of functions $f(t)$ such that one has 
\[
(t-f(t))\mathcal{B} \subset B(t) \subset (t+f(t))\mathcal{B}
\]
for $t$ large?
\end{enumerate}
We will focus here on these two questions, but we mention also some of the questions from the other articles in this proceedings volume.  
\begin{enumerate}
\item {\bf Distributional limits of passage times.} Letting $T(x,y)$ be the infection time of $y$ started at $x$, are there functions $a(x)$ and $b(x)$ such that
\[
\frac{T(0,x) - a(x)}{b(x)} \Rightarrow X
\]
as $\|x\|_1 \to \infty$ for some nondegenerate limiting distribution $X$? In two\hyp{}dimensional FPP/LPP-type models, the answer is expected (and proved in a couple of cases) to be yes, and $X$ should have the 
\index{Tracy-Widom distribution}%
Tracy-Widom distribution from random matrix theory. $b(x)$ should be the order of fluctuations of $T(0,x)$, and embedded in this question is obviously the question: what is the order of fluctuations of $T(0,x)$? In the DLA model, fluctuations are of significantly lower order than in FPP/LPP-type models. Work on these questions has led into concentration of measure, particle systems, integrable probability and exactly solvable systems.
\item {\bf Structure of geodesics.} Optimal infection paths are called geodesics. What is the structure of the set of all geodesics? How different are geodesics from straight lines? Infinite geodesics are infinite paths all whose segments are geodesics. How many infinite geodesics are there? Are there doubly-infinite geodesics? These questions are all related to Busemann functions in metric geometry.
\end{enumerate}

The existence (or nonexistence) of 
\index{geodesic!doubly-infinite}%
doubly-infinite geodesics mentioned above is directly related to the number of ground states of disordered ferromagnetic spin models. We explain the connection to the 
\index{disordered ferromagnet}%
disordered Ising ferromagnet, which is a variant of the usual Ising model from statistical mechanics. We consider dimension two, and define the dual lattice
\[
(\mathbb{Z}_*^2, \mathcal{E}_*^2) = (\mathbb{Z}^2,\mathcal{E}^2) + (1/2,1/2),
\]
which is the usual two-dimensional lattice shifted by $(1/2,1/2)$. Each ``dual edge'' $e^*$ in $\mathcal{E}_*^2$ bisects a unique edge $e$ in $\mathcal{E}^2$, so we say that $e^*$ is the edge dual to $e$. Define a ``spin configuration'' $\sigma = (\sigma_x)_{x \in \mathbb{Z}^2_*}$ to be an assignment of $+1$ and $-1$ to every dual vertex; that is, $\sigma$ is an element of $\{-1,+1\}^{\mathbb{Z}^2_*}$. The interactions between spins are given by ``couplings,'' which are variables assigned to the edges. Accordingly, let $(J_{x,y})_{\{x,y\} \in \mathcal{E}^2_*}$ be a family of independent random variables which are almost surely positive. For any $\sigma$ and any finite $S \subset \mathbb{Z}^2_*$, we define the random energy of $\sigma$ relative to the couplings and the set $S$ as
\[
\mathcal{H}_S(\sigma) = -\sum_{\{x,y\} \in \mathcal{E}^2_*, x \in S} J_{x,y} \sigma_x\sigma_y.
\]

For the standard disordered Ising ferromagnet, the couplings $(J_{x,y})$ are typically chosen to be identically distributed. In this case, it is of great interest to determine the structure and number of ground states for the model. Precisely, a configuration $\sigma$ is called a ground state for the couplings $(J_{x,y})$ if for each configuration $\tilde \sigma$ such that $\tilde \sigma_x \neq \sigma_x$ for only finitely many $x$, one has
\[
\mathcal{H}_S(\sigma) \leq \mathcal{H}_S(\tilde \sigma) \text{ for all finite } S \subset \mathbb{Z}^2_*.
\]
One can think of a ground state as a local minimizer of the energy functional. It is not known how many ground states there are for a given realization of couplings, but it is believed that in this two-dimensional model (and sufficiently low-dimensional analogues), there should be only two almost surely. These two are the all-plus and all-minus states.

If there exists a nonconstant ground state $\sigma$ for couplings $(J_{x,y})$, then we can compare it to the all-plus ground state $\sigma_+$. We claim that $\sigma$ and $\sigma_+$ cannot have any finite regions of disagreement: there is no finite $S$ such that $\sigma_x = -1$ for all $x \in S$ and $\sigma_y = +1$ for all $y \in \partial S$ (here $\partial S$ refers to the set of vertices in $S^c$ with a neighbor in $S$). Indeed, if there were such an $S$, we could apply the energy minimization property to $S$ to find that $\mathcal{H}_S(\sigma)=0$. However, as long as the couplings are continuously distributed, one can argue that almost surely, there are no finite $S$ with $\mathcal{H}_S(\sigma)=0$ for some $\sigma$. This justifies the claim; from it we see that any nonconstant ground state must have a two-sided and circuitless original lattice path of edges whose dual edges $\{x,y\}$ satisfy $\sigma_x \neq \sigma_y$. That is, any nonconstant ground state can be associated to at least one such doubly-infinite path. 

Conversely, one can construct nonconstant ground states if one assumes existence of doubly-infinite geodesics in a related FPP model. Namely, given couplings $(J_{x,y})$ as above, associated to the edges of the dual lattice, one defines a passage-time configuration $(t_e)$ by setting $t_e = J_{x,y}$, where $\{x,y\}$ is the unique edge dual to $e$. Supposing that there is a doubly-infinite geodesic $\Gamma$ (this is a doubly-infinite path each of whose subpaths is an optimizing path for $T$) in this configuration, one can set $\sigma_x = +1$ for all $x$ on one side of $\Gamma$ and $-1$ for all $x$ on the other side of $\Gamma$, and such $\sigma$ will be a nonconstant ground state for $(J_{x,y})$.

The relation between ground states and geodesics allows one to carry results between the two models. For example, if one can rule out existence of doubly-infinite geodesics in FPP models, one can deduce nonexistence of nonconstant ground states for the associated spin models.

\section{A bit of basic probability}

Before we begin, we review some of the notions from probability, and discuss typical limiting behavior, to compare to that which we see in random growth models. The main object of study is a random variable, say $X$, which is formally a function from a probability space to $\mathbb{R}$, but we think of it as a random number. A random variable is characterized by its distribution function $F$, which is a function $F: \mathbb{R} \to \mathbb{R}$ satisfying
\begin{enumerate}
\item (nondecreasing) $F(x) \leq F(y)$ if $x \leq y$,
\item (behavior at infinity) 
\[
\lim_{x \to -\infty} F(x) = 0 \text{ and } \lim_{x \to \infty} F(x) = 1,
\]
\item and (right continuity) $F$ is a right continuous function.
\end{enumerate}
We think of $F(x)$ as being the probability that the random variable $X$ is no bigger than $x$:
\[
F(x) = \mathbb{P}(X \leq x),
\]
and so we can compute the probability that $X$ lies in any given interval as
\[
\mathbb{P}(X \in (a,b]) = F(b) - F(a).
\]
It is a theorem that to every such function $F$ there is a random variable $X$ with these properties.

The multidimensional analogue of a random variable is usually called a random vector. A random vector with values in $\mathbb{R}^n$ is simply an $n$-tuple $X = (X_1, \ldots, X_n)$, each of whose coordinates are random variables (and all these variables are on the same probability space). Similarly to the one-dimensional case, a random vector is characterized by its distribution function, which is a function $F$ on $\mathbb{R}^n$ (representing $F(x) = \mathbb{P}(X \leq x)$, where $X \leq x$ means $X_i \leq x_i$ for all $i$) which is nondecreasing ($F(x) \leq F(y)$ when $x \leq y$), satisfies
\[
\lim_{x_i \to -\infty \text{ for all }i} F(x) = 0 \quad \text{and} \quad \lim_{x_i\to \infty \text{ for all } i} F(x) = 1,
\]
and has the right continuity property $\lim_{x \downarrow y} F(x) = y$ for all $y$. Unlike in the one-dimensional case, an additional positivity property for $F$ is required to guarantee that there is a random vector $X$ with distribution function $F$. See \cite[Section~3.9]{DurrettBook} for more details.

Back to the one-dimensional setting, in many cases of interest, $F$ is actually differentiable, and we write its derivative $F'(x)$ as $f(x)$, calling $f$ the density function of the random variable $X$. In this case, we can use the fundamental theorem of calculus to write 
\[
\mathbb{P}(X \in (a,b]) = F(b) - F(a) = \int_a^b f(x)~\text{d}x.
\]
Note that by the above rules on $F$, one has $\int_{-\infty}^\infty f(x)~\text{d}x = \lim_{x \to \infty} F(x) = \mathbb{P}(X < \infty) = 1$, and $f$ must be nonnegative. Furthermore we can compute the expectation of $X$ (average value of $X$) as
\[
\mathbb{E}X = \int_{-\infty}^{\infty} x~\text{d}F(x) = \int_{-\infty}^{\infty} x f(x)~\text{d}x.
\]
Generally the expectation of a function $g$ of $X$ is calculated as
\[
\mathbb{E}g(X) = \int_{-\infty}^\infty g(x)~\text{d}F(x) = \int_{-\infty}^\infty g(x)f(x)~\text{d}x.
\]
Some typical random variables are the uniform on $[a,b]$, which has density
\[
f(x) = \begin{cases}
\frac{1}{b-a} & \text{ if } x \in [a,b] \\
0 & \text{ otherwise}
\end{cases}
\]
(in this case we say $X$ has uniform distribution on $[a,b]$) and exponential with parameter $\lambda$, which has density
\[
f(x) = \begin{cases}
\lambda e^{-\lambda x} & \text{ if } x \geq 0 \\
0 & \text{ otherwise}
\end{cases}.
\]
From these density formulas, one can easily calculate, for instance, the ``moments'' of a random variable with the exponential distribution. That is, we can find the $m$-th moment of such an $X$ as
\[
\mathbb{E}X^m = \int_0^\infty x^m f(x) ~\text{d}x = \lambda \int_0^\infty x^m e^{-\lambda x}~\text{d}x,
\]
or the $m$-th central moment
\[
\mathbb{E}|X-\mathbb{E}X|^m = \int_0^\infty |x-\mathbb{E}X|^mf(x)~\text{d}x.
\]
We recall that the second central moment is typically called the variance of $X$:
\[
\mathrm{Var}~X = \mathbb{E}(X-\mathbb{E}X)^2 = \int_0^\infty (x-\mathbb{E}X)^2f(x)~\text{d}x.
\]
In a way, central moments measure how likely $X$ is to vary from its mean, as we have Chebyshev's inequality: for $a > 0$,
\[
\mathbb{P}(|X-\mathbb{E}X| \geq a) \leq \frac{1}{a^m} \mathbb{E}|X-\mathbb{E}X|^m.
\]
For example, since the mean of the exponential distribution is $\lambda^{-1}$, we can compute
\[
\mathbb{P}(|X-\lambda^{-1}| \geq a) = \mathbb{P}(|X-\lambda^{-1}|^2 \geq a^2) \leq \frac{1}{a^2}\mathrm{Var}~X = \frac{1}{(a\lambda)^2}.
\]
As $a \to \infty$, the right side converges to 0, and in a sense, this inequality states that $X$ is reasonably approximated by its mean. 

A fundamental notion in probability is that of independence. Informally, two variables $X$ and $Y$ are independent if given information about $Y$, the distribution of $X$ is unchanged. Recalling that the conditional probability of an event $B$ given $A$ is 
\[
\mathbb{P}(B \mid A) = \frac{\mathbb{P}(A \cap B)}{\mathbb{P}(A)}
\]
(when $\mathbb{P}(A) > 0$), we say that $X,Y$ are independent if whenever $a_i \leq b_i$ for $i=1,2$, one has
\[
\mathbb{P}(X \in [a_1,b_1] \mid Y \in [a_2,b_2]) = \mathbb{P}(X \in [a_1,b_1]),
\]
or
\[
\mathbb{P}(X \in [a_1,b_1], Y \in [a_2,b_2]) = \mathbb{P}(X \in [a_1,b_1]) \mathbb{P}(Y \in [a_2,b_2]),
\]
and the variables $X_1, \ldots, X_n$ are independent if whenever $a_i \leq b_i$ for $i = 1, \ldots, n$, one has
\[
\mathbb{P}\left( \cap_{i=1}^n \left\{X_i \in [a_i,b_i] \right\} \right) = \prod_{i=1}^n \mathbb{P}\left( X_i \in [a_i,b_i] \right).
\]
This condition carries over to expectation, and if $X_1, \ldots, X_n$ are independent, and $g_1, \ldots, g_n$ are integrable functions of the $X_i$'s, then
\[
\mathbb{E}g_1(X_1) \cdots g_n(X_n) = \mathbb{E}g_1(X_1) \cdots \mathbb{E}g_n(X_n).
\]
One use of these factoring properties is to compute the variance of a sum of independent random variables (each of which is assumed to have finite second moment) as
\[
\mathrm{Var}(X_1 + \cdots + X_n) = \mathrm{Var}~X_1 + \cdots + \mathrm{Var}~X_n.
\]
This in turn can be used to control the size of the sum of independent random variables using Chebyshev: if the $X_i$'s all have the same distribution, for $a>0$,
\begin{align*}
\mathbb{P}\left( \left| \frac{X_1 + \cdots + X_n}{n} - \mathbb{E}X_1\right| \geq a\right) &= \mathbb{P}\left( \left| \sum_{i=1}^n X_i - \mathbb{E}\sum_{i=1}^n X_i \right| \geq an \right) \\
&\leq \frac{\mathrm{Var}~\sum_{i=1}^n X_i}{(an)^2} \\
&= \frac{\mathrm{Var}~X_1}{a^2 n},
\end{align*}
which converges to $0$ as $n \to \infty$. This is true for any $a>0$ (however small), so we have proved informally that if $X_1, \ldots, X_n$ are i.i.d. (independent and identically distributed), all with finite second moments, then the Weak 
\index{law of large numbers!WLLN}%
\index{WLLN}%
Law of Large Numbers (WLLN) holds: one has $\frac{X_1 + \cdots + X_n}{n} \to \mathbb{E}X_1$ in probability, or, for any $a>0$,
\[
\mathbb{P}\left( \left| \frac{X_1 + \cdots + X_n}{n} - \mathbb{E}X_1 \right| \geq a\right) \to 0.
\]
In many cases, there is even a rate of convergence on the right side. For instance, we would say that $X_1 + \cdots + X_n - n \mathbb{E}X_1$ is ``exponentially concentrated on scale $\psi(n)$'' if for some $c>0$ and all $t \geq 0$,
\[
\mathbb{P}\left( |X_1 + \cdots + X_n - n \mathbb{E}X_1| \geq t\psi(n) \right) \leq e^{-ct}.
\]
If $\psi(n)$ here can be taken to be of smaller order than $n$, then this is an improved WLLN, as we can take $t = n/\psi(n)$, so that the right side converges to 0 quickly. Exponential concentration is often written in the following form, from which the above follows through a Chebyshev-like inequality: for some $\alpha,C>0$ and all $n$,
\[
\mathbb{E}\exp\left( \alpha \frac{|X_1 + \cdots + X_n - n \mathbb{E}X_1|}{\psi(n)}\right) \leq C<\infty.
\]

There are stronger versions of the above convergence, and the most famous is the Strong 
\index{law of large numbers!SLLN}%
\index{law of large numbers!LLN}%
\index{LLN}%
\index{SLLN}%
Law of Large Numbers (SLLN), which gives an almost sure convergence result: if $(X_i)$ are i.i.d. with finite first moment ($\mathbb{E}|X_1| < \infty$), then one has $\frac{X_1 + \cdots + X_n}{n} \to \mathbb{E}X_1$ almost surely; that is,
\[
\mathbb{P}\left( \frac{X_1 + \cdots + X_n}{n} \to \mathbb{E}X_1 \right) = 1.
\]
Strictly speaking, for this last result to hold, we need to know that we can always find a probability space on which there is an infinite sequence of independent random variables, with specified distributions. This fact is given to us by a technical theorem of Kolmogorov:
\begin{theorem}
Given distribution functions $F_1, F_2, \ldots$, there exists a probability space on which there are independent random variables $X_1, X_2, \ldots$ such that
\begin{enumerate}
\item the $X_i$'s are independent: each finite collection $(X_j)_{j \in J}$ {\rm(}where the set $J \subset \{1, 2, \ldots\}$ is finite{\rm)} of the $X_i$'s is independent, and
\item for each $i$, $X_i$ has distribution function $F_i$.
\end{enumerate}
\end{theorem}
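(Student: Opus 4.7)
The plan is a two\hyp{}step construction. First, I would build a single random variable with a prescribed distribution $F$ out of a uniform random variable on $[0,1]$; then I would produce a countable family of independent uniforms on one probability space and apply the first step componentwise.

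\textbf{Step 1: Quantile transform.} Take the probability space $\bigl([0,1],\mathcal{B}([0,1]),\mathrm{Leb}\bigr)$, and let $U(\omega)=\omega$ (so $U$ is uniform on $[0,1]$). For any distribution function $F$, define the generalized inverse
\[
F^{-1}(u) = \inf\{x \in \mathbb{R} : F(x) \geq u\}, \qquad u \in (0,1).
\]
A short verification (using right continuity of $F$) shows the key identity $\{u : F^{-1}(u) \leq x\} = \{u : u \leq F(x)\}$, and therefore $\mathbb{P}(F^{-1}(U) \leq x) = F(x)$. So $F^{-1}(U)$ has distribution function $F$.

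\textbf{Step 2: Independent uniforms via the binary expansion trick.} The output of Step 1 only uses one uniform random variable, so it suffices to build a countable sequence of independent uniforms on a single space. Again on $\bigl([0,1],\mathcal{B},\mathrm{Leb}\bigr)$, write a point $\omega \in [0,1)$ in its binary expansion $\omega = \sum_{k=1}^\infty B_k(\omega)\,2^{-k}$ (using, say, the non\hyp{}terminating expansion to make it unique). A classical computation shows the digit functions $B_1,B_2,\ldots$ are i.i.d.\ Bernoulli$(1/2)$: the event $\{B_1 = b_1, \ldots, B_n = b_n\}$ is a dyadic subinterval of $[0,1)$ of length $2^{-n}$. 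Now fix any bijection $\pi:\mathbb{Z}_{>0}\times\mathbb{Z}_{>0}\to\mathbb{Z}_{>0}$, set $B_{i,j} = B_{\pi(i,j)}$, and define
\[
U_i = \sum_{j=1}^\infty B_{i,j}\,2^{-j}, \qquad i \geq 1.
\]
Each $U_i$ is uniform on $[0,1]$ (its digits are i.i.d.\ Bernoulli$(1/2)$), and disjoint digit rows give independent $U_i$'s, since functions of disjoint collections of independent random variables are independent.

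\textbf{Step 3: Assemble.} Finally set $X_i = F_i^{-1}(U_i)$ on the same space $[0,1]$. By Step 1, $X_i$ has distribution function $F_i$. Independence of any finite subcollection $(X_j)_{j\in J}$ follows by applying the factorization
\[
\mathbb{P}\Bigl(\bigcap_{j\in J}\{U_j \leq u_j\}\Bigr) = \prod_{j\in J}\mathbb{P}(U_j \leq u_j)
\]
and then taking $u_j = F_j(x_j)$ to convert between the $\{U_j \leq F_j(x_j)\}$ and $\{X_j \leq x_j\}$ events via the identity from Step 1.

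The main obstacle is really just Step 2: one must be careful that the binary\hyp{}expansion digits are genuinely i.i.d.\ Bernoulli$(1/2)$ (handling the measure\hyp{}zero set of dyadic rationals where the expansion is non\hyp{}unique is a nuisance but harmless), and that reindexing a sequence of independent variables by a bijection preserves independence of the resulting ``rows.'' Once these verifications are in place, Steps 1 and 3 are routine. An alternative route would invoke the Kolmogorov extension theorem on $\mathbb{R}^{\mathbb{N}}$ with the product of the measures induced by the $F_i$'s, but that requires substantially more measure\hyp{}theoretic overhead (consistency of finite\hyp{}dimensional marginals, Carathéodory extension) and feels out of proportion for this statement.
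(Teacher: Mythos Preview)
Your argument is correct. The quantile transform in Step~1 is the standard device, and the identity $\{F^{-1}(u)\le x\}=\{u\le F(x)\}$ does follow from right continuity of $F$ as you say. Step~2 is the familiar binary\hyp{}expansion construction of countably many independent uniforms on $[0,1]$, and Step~3 is immediate once you observe that each $X_i$ is a measurable function of $U_i$ alone, so independence of any finite subcollection of the $X_i$'s is inherited from independence of the corresponding $U_i$'s.

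As for comparison with the paper: the paper does not prove this statement at all. It is quoted as a ``technical theorem of Kolmogorov'' and used as a black box to justify building the edge\hyp{}weight configurations in FPP. So there is nothing in the paper to compare your argument against; you have supplied a proof where the paper gives none. Your construction is in fact the elementary one given in Durrett's textbook (which the paper cites for background), and it avoids the heavier Kolmogorov extension machinery you mention as an alternative.
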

\noindent
This result is used all throughout probability, and we can use it to build our random growth models, like FPP, by generating independent edge-weights $(t_e)$ with specified distributions, one for each edge on an infinite graph, so long as there are countably many edges.

The SLLN tells us that the asymptotic behavior of a sum $S_n = X_1 + \cdots + X_n$ of i.i.d. random variables with finite first moment satisfies almost surely
\[
S_n = n\mathbb{E}X_1 + o(n),
\]
but how large is this error term? Since it is random, the simplest way to quantify it is to find sequences of numbers $(b_n)$ such that
\[
\mathbb{P}\left( S_n - n \mathbb{E}X_1 \geq b_n\right) \to 0 \text{ or } \to 1.
\]
In the case $\to 1$, we would say that $S_n - n \mathbb{E}X_1$ is typically larger than $b_n$, and in the case $\to 0$, smaller than $b_n$. To find the exact scale of $S_n - n \mathbb{E}X_1$, another way is to search for sequences $(b_n)$ such that the convergence above is to a number strictly between $0$ and $1$. Formally, we can look for and a fixed random variable $Z$ (the ``limiting random variable'') and a sequence $(b_n)$ such that for any $t  \in \mathbb{R}$,
\[
\mathbb{P}\left( \frac{S_n - n \mathbb{E}X_1}{b_n} \geq t \right) \to \mathbb{P}(Z \geq t).
\]
In this case, we say that $\frac{S_n-n\mathbb{E}X_1}{b_n}$ converges in distribution to $Z$, write
\[
\frac{S_n - n \mathbb{E}X_1}{b_n} \Rightarrow Z,
\]
and we content ourselves by saying that $S_n - n \mathbb{E}X_1$ ``fluctuates on the scale $b_n$.'' (A technical point here is that we do not need this convergence at all $t$, but just those at which the distribution function of $Z$ is continuous.) This is precisely what the 
\index{central limit theorem}%
Central Limit Theorem (CLT) says:
\begin{theorem}[CLT]
Let $(X_i)$ be i.i.d. such that the $X_i$'s have finite second moment. Then, writing $\mu = \mathbb{E}X_1$ and $\sigma^2 = \mathrm{Var}~X_1$, one has
\[
\frac{S_n - n \mu}{\sqrt{n}\sigma} \Rightarrow Z,
\]
where $Z$ has the standard Gaussian distribution:
\[
\mathbb{P}(Z \geq t) = \frac{1}{\sqrt{2\pi}} \int_t^\infty e^{-\frac{x^2}{2}}~\text{d}x.
\]
\end{theorem}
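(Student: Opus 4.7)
The plan is to prove the CLT by the method of characteristic functions. By setting $Y_i = (X_i - \mu)/\sigma$, I reduce to the case where $(Y_i)$ are i.i.d. with $\mathbb{E}Y_1 = 0$ and $\mathrm{Var}~Y_1 = 1$, and the target becomes $\bar S_n := (Y_1 + \cdots + Y_n)/\sqrt{n} \Rightarrow Z$. Define the characteristic function $\varphi(t) = \mathbb{E} e^{itY_1}$. The centerpiece of the proof is a second-order Taylor expansion at the origin:
\[
\varphi(t) = 1 - \frac{t^2}{2} + o(t^2) \text{ as } t \to 0.
\]

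To establish this expansion using only the finite second-moment hypothesis, I start from the elementary bound $\bigl| e^{ix} - 1 - ix + x^2/2 \bigr| \leq \min(|x|^2, |x|^3/6)$ and apply it with $x = tY_1$. Taking expectations and using $\mathbb{E}Y_1 = 0$, $\mathbb{E}Y_1^2 = 1$ gives
\[
\bigl| \varphi(t) - 1 + t^2/2 \bigr| \leq t^2\, \mathbb{E}\bigl[ \min(Y_1^2,\, t|Y_1|^3/6) \bigr].
\]
The integrand is dominated by $Y_1^2$, which has finite expectation, and tends to $0$ pointwise as $t \to 0$, so dominated convergence yields the $o(t^2)$ remainder. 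This is the main technical obstacle, since a naive Taylor expansion of $\varphi$ would require a third moment, and handling the rougher $|x|^2$ bound on the tail via truncation/dominated convergence is the one delicate maneuver in the proof.

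Once the expansion is in hand, the remainder is mechanical: independence gives
\[
\mathbb{E} e^{it\bar S_n} = \varphi(t/\sqrt{n})^n = \Bigl( 1 - \frac{t^2}{2n} + o(1/n) \Bigr)^n,
\]
and standard manipulation (for instance, taking logarithms, which is justified once $n$ is large enough that $\varphi(t/\sqrt{n})$ is near $1$) shows this converges to $e^{-t^2/2}$, the characteristic function of the standard Gaussian. Finally, I appeal to L\'evy's continuity theorem --- pointwise convergence of characteristic functions to a function continuous at $0$ implies convergence in distribution --- to conclude that $\bar S_n \Rightarrow Z$. The continuity theorem itself requires a tightness argument, but this is standard measure-theoretic material that I would import as a black box rather than reprove here.
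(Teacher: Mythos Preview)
Your argument is correct and is the standard proof via characteristic functions: reduce to mean zero and unit variance, use the second-order expansion $\varphi(t) = 1 - t^2/2 + o(t^2)$ obtained from the pointwise bound $|e^{ix} - 1 - ix + x^2/2| \le \min(|x|^2, |x|^3/6)$ together with dominated convergence, raise to the $n$th power, and invoke L\'evy's continuity theorem. One small cosmetic point: the remainder bound should carry $|t|$ rather than $t$ in the cubic term, but this has no effect on the argument.

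There is nothing to compare to in the paper itself, however. The CLT appears in the background section on basic probability as a quoted classical result and is stated without proof; the paper's original proofs concern the shape theorem, the subadditive ergodic theorem, and the KPZ relation. So your write-up supplies a proof where the paper deliberately gives none, rather than offering an alternative to something the paper does.
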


In the CLT, we see that sums of i.i.d. random variables ``fluctuate on scale $\sqrt{n}$,'' and the limiting distribution is Gaussian. These two statements are quite typical of classical limit theorems in probability. We will see however,
in the rest of the proceedings volume, 
that in random growth models, scales of fluctuation and limiting distributions are not typically of order $\sqrt{n}$ ``diffusive'' and Gaussian, but rather they are of order $n^{1/2-\epsilon}$ for some $\epsilon>0$ ``subdiffusive'' and related to random matrix theory (the Tracy-Widom distribution).

\index{limit shape}%
\section{Limit shapes}

For the rest of the article, we focus on the first two main growth model questions (limit shapes and convergence rate), and only in the context of FPP and LPP. First we will describe results from FPP including shape theorems and properties of limit shapes. Afterward, we switch to LPP to show which results are similar (shape theorems) and which are different (non-polygonal shapes and exactly solvable cases).
\subsection{FPP}

\index{first-passage percolation (FPP)}%
\index{FPP}%
We begin with the shape theorem in FPP. First let's recall the model. We are given the $d$-dimensional cubic lattice $\mathbb{Z}^d$ with the set of its nearest-neighbor edges $\mathcal{E}^d$. We place i.i.d. nonnegative random variables (edge-weights) $(t_e)_{e \in \mathcal{E}^d}$ with common distribution function $F$ on the edges. A path is a sequence $(v_0, e_0, v_1, e_1, \ldots, e_{n-1},v_n)$ of vertices and edges such that $e_i$ has endpoints $v_i$ and $v_{i+1}$. The passage time of a path $\Gamma$ is 
\[
T(\Gamma) = \sum_{e \in \Gamma} t_e
\]
and the passage time between vertices $x,y \in \mathbb{Z}^d$ is
\[
T(x,y) = \inf_{\Gamma : x \to y} T(\Gamma),
\]
where the infimum is over all paths from $x$ to $y$.

\subsubsection{Shape theorem in FPP}
If $\mathbb{P}(t_e=0) = 0$ then $T$ is almost surely a metric on $\mathbb{Z}^d$, as it is nonnegative, one has $T(x,y) = 0$ only when $x=y$, and $T$ satisfies the triangle inequality $T(x,y) \leq T(x,z) + T(z,y)$. (When edge-weights can be zero, $T$ is a pseudometric.) For convenience, we also extend $T$ to real points; that is, for $x \in \mathbb{R}^d$, we set $[x]$ to be the unique point in $\mathbb{Z}^d$ with $x \in [x] + [0,1)^d$. Then the shape theorem is a type law of large numbers for the set
\[
B(t) = \{x \in \mathbb{R}^d : T(0,x) \leq t\},
\]
saying that $B(t)/t$ approaches a limiting set $\mathcal{B}$. 

In the statement of the shape theorem below, $p_c = p_c(d)$ is the threshold for Bernoulli 
\index{bond percolation}%
bond percolation on $\mathbb{Z}^d$ and can be defined as follows. Let $\mathbb{P}_p$ be the probability measure under which edge-weights are Bernoulli with parameter $1-p$:
\[
\mathbb{P}_p(t_e=0) = p \quad \text{and} \quad \mathbb{P}_p(t_e=1)=1-p.
\]
Putting 
\[
\theta(p) = \mathbb{P}_p(0 \text{ is in an infinite self-avoiding path of edges }e \text{ with } t_e=0),
\]
one can then use a coupling argument to show that $\theta$ is nondecreasing in $p$. Therefore the number
\[
p_c = \sup\{p \in [0,1] : \theta(p)=0\}
\]
has the properties $\theta(p) > 0$ for $p > p_c$ and $\theta(p) = 0$ for $p < p_c$. It is an important result that $p_c \in (0,1)$ for all dimensions $d \geq 2$, and we can quickly argue at least that $p_c > 0$. (The argument for $p_c < 1$ can be found in \cite{grimmett}, for example.). Write $A_n$ for the event that there is a self-avoiding path starting from 0 with $n$ edges $e$, all satisfying $t_e=0$. Then for any $n$,
\[
\theta(p) \leq \mathbb{P}_p(A_n) \leq \sum_{\gamma : \#\gamma = n} \mathbb{P}_p(t_e=0 \text{ for all } e \in \gamma),
\]
where the sum is over all self-avoiding paths $\gamma$ starting from $0$ with $n$ edges. Each such $\gamma$ satisfies
\[
\mathbb{P}_p(t_e=0 \text{ for all } e \in \gamma) = p^{\# \gamma},
\]
and since there are $2d(2d-1)^n$ many such paths with $n$ edges (we first choose an edge touching 0, and then at every subsequent step choose an adjacent edge we have not chosen before), we obtain
\[
\theta(p) \leq 2d(2d-1)^n p^n.
\]
This converges to $0$ if $p < 1/(2d-1)$, and so $p_c \geq 1/(2d-1)$.

Returning to the 
\index{shape theorem}%
shape theorem, the condition $\mathbb{P}(t_e=0)<p_c$ below is there to ensure that there are not so many zero-weight edges that $T(0,x)$ will grow sublinearly in $x$. (This occurs for $\mathbb{P}(t_e=0)\geq p_c$.)

\begin{theorem}[Richardson \cite{Richardson}, Cox-Durrett \cite{CoxDurrett}, Kesten \cite{aspects}]
Assume that $\mathbb{E}\min\{t_1, \ldots, t_{2d}\}^d < \infty$, where $t_i$ are i.i.d. copies of $t_e$ and $\mathbb{P}(t_e=0)<p_c$. There exists a deterministic, convex, compact set in $\mathbb{R}^d$, symmetric about the axes and with nonempty interior, such that for any $\epsilon>0$,
\[
\mathbb{P}\left( (1-\epsilon)\mathcal{B} \subset B(t)/t \subset (1+\epsilon)\mathcal{B} \text{ for all large }t \right) = 1.
\]
\end{theorem}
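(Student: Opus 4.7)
The plan is to follow the classical route via Kingman's subadditive ergodic theorem. First, for each fixed direction $x \in \mathbb{Z}^d$, I would apply Kingman's theorem to the sequence $(T(0, nx))_{n \geq 1}$. The subadditive relation $T(0, (m+n)x) \leq T(0, mx) + T(mx, (m+n)x)$ follows from concatenation of optimal paths, and by translation invariance of the i.i.d.\ weights, $T(mx, (m+n)x)$ has the same distribution as $T(0, nx)$. The moment condition $\mathbb{E}\min\{t_1, \ldots, t_{2d}\}^d < \infty$ is used here to ensure $\mathbb{E}T(0, x) < \infty$: one builds a short path between neighboring vertices by using the quickest outgoing edge from each vertex, giving $T(0, e_j) \leq \min\{t_1, \ldots, t_{2d}\}$ in distribution. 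Kingman's theorem then yields an almost sure deterministic limit $g(x) := \lim_n T(0, nx)/n$, which is the starting point for the limit shape.

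Next I would establish the algebraic properties of $g$. Positive homogeneity $g(qx) = q\, g(x)$ for $q \in \mathbb{Q}_{\geq 0}$ follows by taking subsequences, and subadditivity $g(x + y) \leq g(x) + g(y)$ passes to the limit from path concatenation. Hence $g$ is a seminorm on $\mathbb{Q}^d$, is Lipschitz, and extends continuously to $\mathbb{R}^d$. Symmetry under the reflections of $\mathbb{Z}^d$ is inherited from the weight distribution. The hypothesis $\mathbb{P}(t_e = 0) < p_c$ is exactly what upgrades $g$ to a genuine norm: in the subcritical regime, the cluster of zero-weight edges through $0$ is almost surely finite, and a block/renewal argument forces $\mathbb{E}T(0, nx) \geq c \|nx\|_1$ for some $c > 0$, so $g(x) \geq c\|x\|_1$. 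Setting $\mathcal{B} = \{x \in \mathbb{R}^d : g(x) \leq 1\}$ produces a convex, compact, axis-symmetric set with nonempty interior, as required.

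The real content of the theorem is upgrading the pointwise a.s.\ convergence of $T(0, nx)/n$ at rational points to uniform convergence in $x$, i.e., the Hausdorff-type sandwich $(1 - \epsilon)\mathcal{B} \subset B(t)/t \subset (1 + \epsilon)\mathcal{B}$. The standard strategy is a compactness argument: cover the unit sphere of $g$ by finitely many rational directions $x_1, \ldots, x_K$ such that, by continuity of $g$, every $y$ on the radius-$t$ level set of $g$ lies near some $n x_i$ with $n \approx t / g(x_i)$. Subadditivity gives
\[
|T(0, y) - T(0, nx_i)| \leq T(nx_i, y),
\]
and it suffices to show that
\[
\sup_{\|y - nx_i\|_1 \leq \delta n} T(nx_i, y) = o(n) \quad \text{a.s.}
\]
for each $i$, followed by letting $\delta \to 0$.

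The principal obstacle is precisely this uniform control of the correction term, and it is where the sharp moment hypothesis is essential. In a box of side $O(n)$ there are $O(n^d)$ lattice vertices, and one needs to bound the maximum over these vertices of the time to reach a nearby rational direction. A greedy construction bounds this maximum by a maximum of $O(n^d)$ i.i.d.\ copies of $\min\{t_1, \ldots, t_{2d}\}$; by a standard lemma on maxima of i.i.d.\ variables, the condition $\mathbb{E}\min\{t_1, \ldots, t_{2d}\}^d < \infty$ is exactly what makes this maximum $o(n)$ almost surely via Borel--Cantelli. This is the delicate ingredient isolated by Cox--Durrett in $d = 2$ and by Kesten in general $d$, and it is genuinely sharp: weaker moment assumptions permit anomalously long tendrils of $B(t)$ that prevent the uniform sandwich above.
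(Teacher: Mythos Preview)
Your proposal follows the same classical route as the paper: apply the subadditive ergodic theorem for radial convergence, extend $g$ to a norm on $\mathbb{R}^d$, then upgrade to uniform convergence by a compactness argument on the unit sphere. Two remarks are in order.

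First, your justification of $\mathbb{E}T(0,e_j)<\infty$ is not right as written: taking the quickest outgoing edge from $0$ does not produce a path to $e_j$, so the claimed stochastic bound $T(0,e_j)\leq \min\{t_1,\ldots,t_{2d}\}$ is false. The paper instead constructs $2d$ edge-disjoint deterministic paths $\gamma_1,\ldots,\gamma_{2d}$ from $0$ to $e_1$ and bounds $\mathbb{E}T(0,e_1)\leq \mathbb{E}\min_i T(\gamma_i)$; a short lemma shows the right-hand side is finite once $\mathbb{E}\min\{t_1,\ldots,t_{2d}\}<\infty$. Only the first moment of the minimum is used at this stage, not the $d$-th.

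Second, for the uniform-convergence step the paper does something different from (and weaker than) what you outline: it imposes the simplifying hypothesis $\mathbb{P}(t_e\in[a,b])=1$ with $0<a<b<\infty$, under which $T(x,y)\leq b\|x-y\|_1$ and $g(x)\leq b\|x\|_1$ make the compactness argument immediate. Your sketch of the genuine Cox--Durrett/Kesten mechanism---controlling the correction term by a maximum over $O(n^d)$ vertices of i.i.d.\ copies of $\min\{t_1,\ldots,t_{2d}\}$, which is precisely where the $d$-th moment hypothesis enters---is the correct route to the theorem as stated and goes beyond what the paper actually presents.
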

There is also a version for ergodic distributions by Boivin \cite{B90}, first done in less generality by Derriennic, as reported in \cite[(9.25)]{aspects}. One can show that the above is equivalent to: there is a norm $g$ on $\mathbb{R}^d$ such that $\mathcal{B}$ is the unit ball of $g$, and
\[
\limsup_{x \to \infty} \frac{|T(0,x) - g(x)|}{\|x\|_1} = 0 \text{ almost surely}.
\]
This can be thought of as
\[
T(0,x) = g(x) + o(\|x\|_1) \text{ as } x \in \mathbb{R}^d \to \infty.
\]
We will take this approach in the proof: build the norm $g$ and then show the limsup statement above.

\bigskip
\noindent
\begin{proof}[Proof of shape theorem.] The idea of the proof is to first show ``radial'' convergence; that is, for a fixed $x \in \mathbb{Z}^d$, to show that
\[
g(x) := \lim_n \frac{T(0,nx)}{n} \text{ exists}.
\]
To do this, we appeal to the subadditive ergodic theorem. Then we ``patch'' together convergence in many different directions $x$ to get a uniform convergence. We can see quickly that this limit should at least exist when we take expectations, since by invariance under lattice translations,
\[
\mathbb{E}T(x,y) = \mathbb{E}T(x+z,y+z) \text{ for all } x,y,z \in \mathbb{Z}^d,
\]
and so by the triangle inequality, for $0 \leq m \leq n$,
\[
\mathbb{E}T(0,nx) \leq \mathbb{E}T(0,mx) + \mathbb{E}T(mx,nx) = \mathbb{E}T(0,mx) + \mathbb{E}T(0,(n-m)x).
\]
Thus the sequence $(a_n)$ given by $a_n = \mathbb{E}T(0,nx)$ is subadditive and $a_n/n$ must have a limit from the following standard argument (referred to usually as 
\index{Fekete's lemma}%
Fekete's lemma): for fixed $k$ and all $n \geq k$,
\[
a_n \leq a_k + a_{n-k} \leq \cdots \leq \lfloor n/k \rfloor a_k + a_{n-\lfloor n/k \rfloor k}.
\]
Since $n-\lfloor n/k \rfloor k$ is bounded as $n \to \infty$, we divide by $n$ to obtain
\[
\limsup_n a_n/n \leq \limsup_n \frac{\lfloor n/k \rfloor}{n} a_k = a_k/k.
\]
This is true for all $k$, so
\[
\limsup_n a_n/n \leq \inf_k a_k/k \leq \liminf_k a_k/k,
\]
meaning $\lim_n a_n/n$ exists and equals $\inf_k a_k/k$.

To show the limit without expectation is much harder, and we will have to appeal to some machinery that was invented specifically for this problem. Liggett's version \cite[Theorem~1.10]{LiggettST} of Kingman's 
\index{subadditive ergodic theorem}%
subadditive ergodic theorem states:
\begin{theorem}\label{thm: subadd}
Let $\{X_{m,n} : 0 \leq m < n\}$ is an array of random variables satisfying the following assumptions:
\begin{enumerate}
\item for each $n$, $\mathbb{E}|X_{0,n}| <\infty$ and $\mathbb{E}X_{0,n} \geq -cn$ for some constant $c>0$,
\item $X_{0,n} \leq X_{0,m} + X_{m,n}$ for $0 < m < n$,
\item for each $m \geq 0$, the sequence $\{X_{m+1,m+k+1} :  k \geq 1\}$ is equal in distribution to the sequence $\{X_{m,m+k} : k \geq 1\}$, and
\item for each $k \geq 1$, $\{X_{nk, (n+1)k} : n \geq 1\}$ is a stationary ergodic process.
\end{enumerate}
Then 
\[
g : = \lim_n \frac{1}{n} \mathbb{E}X_{0,n} = \inf_n \frac{1}{n} \mathbb{E}X_{0,n} \text{ exists},
\]
and
\[
\lim_n \frac{1}{n} X_{0,n} =g \text{ a.s. and in }L^1.
\]
\end{theorem}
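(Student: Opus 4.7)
The plan splits into three pieces: (a) existence of the expectation limit via Fekete, (b) the almost sure upper bound $\limsup X_{0,n}/n \le g$ via Birkhoff applied to the block process, and (c) the matching almost sure lower bound, which is the main obstacle.

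Piece (a) is immediate from the argument reproduced just above the statement: by subadditivity (assumption~2) combined with $X_{m,n} \stackrel{d}{=} X_{0,n-m}$ (a consequence of assumption~3), the sequence $a_n = \mathbb{E}X_{0,n}$ is subadditive, and with $a_n/n \ge -c$ from assumption~1 Fekete's lemma gives $g = \lim a_n/n = \inf a_n/n \in [-c,\infty)$.

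For piece (b), fix $k \ge 1$ and for $n = qk + r$ with $0 \le r < k$ iterate subadditivity to obtain
\[
\frac{X_{0,n}}{n} \le \frac{q}{n} \cdot \frac{1}{q}\sum_{i=0}^{q-1} X_{ik,(i+1)k} + \frac{X_{qk,n}}{n}.
\]
By assumption~4 the block process $\{X_{ik,(i+1)k}\}_{i \ge 0}$ is stationary ergodic, so Birkhoff's ergodic theorem forces $\tfrac{1}{q}\sum_{i=0}^{q-1} X_{ik,(i+1)k} \to \mathbb{E}X_{0,k}$ almost surely, while $q/n \to 1/k$. The boundary term $X_{qk,n}/n$ vanishes almost surely via a Borel--Cantelli argument spread across the $k$ possible remainders, using $X_{qk,qk+r} \stackrel{d}{=} X_{0,r}$ from assumption~3 together with the integrability from assumption~1. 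Letting $n \to \infty$ and then minimizing over $k$ yields $\limsup_n X_{0,n}/n \le g$ a.s.; Fatou applied to $X_{0,n}/n + c \ge 0$ then upgrades this to $\mathbb{E}\overline{X} \le g$, where $\overline{X} = \limsup X_{0,n}/n$.

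Piece (c) is the hard part: proving $\underline{X} := \liminf X_{0,n}/n \ge g$ a.s. Since subadditivity points the wrong way, one cannot recycle the previous argument. The strategy is to show the integrated bound $\mathbb{E}\underline{X} \ge g$; combined with $\underline{X} \le \overline{X}$ and $\mathbb{E}\overline{X} \le g$ this forces $\underline{X} = \overline{X} = g$ almost surely. To obtain $\mathbb{E}\underline{X} \ge g$ I would follow Liggett's cutting device: for $\epsilon > 0$, choose $M$ with $\mathbb{E}X_{0,M}/M < g + \epsilon$, then define a greedy partition of $[0,n)$ into successive blocks $[m, m+j)$ of length $j \le M$ chosen by a stopping rule at which $X_{m,m+j}/j$ is close to its local infimum, inserting trivial length-one ``bad'' blocks when the stopping rule fails. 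The density of bad blocks is controlled by Birkhoff's theorem applied to the stationary ergodic cutting process provided by assumption~4, and the resulting expected contribution of good blocks approaches $g$ while bad blocks contribute $O(\epsilon)$; sending $\epsilon \downarrow 0$ completes the lower bound. Finally, the $L^1$ convergence is a soft consequence of uniform integrability: $(X_{0,n}/n)^+$ is dominated by the $L^1$-uniformly integrable Birkhoff block averages, and $(X_{0,n}/n)^- \le c$ is bounded, so Vitali's theorem promotes the almost sure convergence to $L^1$.
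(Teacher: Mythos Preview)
Your pieces (a) and (b) match the paper's argument essentially line for line (Fekete for the mean, Birkhoff on the $k$-block process for the a.s.\ upper bound, with the boundary term handled by an $L^1$/shift argument). For piece (c), however, the paper takes a genuinely different route. Rather than a greedy cutting decomposition, the paper constructs an \emph{additive} process $B_{m,n}$ as a subsequential weak limit of the increments $X_{m,k}-X_{n,k}$ (Ces\`aro-averaged over $k$); subadditivity gives $|B_{m,n}|\le X_{m,n}$, a telescoping computation gives $\mathbb{E}B_{0,1}=g$, and then the ordinary ergodic theorem applied to the additive cocycle $B$ yields $\liminf X_{0,n}/n \ge \lim B_{0,n}/n = g$. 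This is the Busemann-function approach (motivated by FPP geodesic increments), and the paper in fact only carries it out for a weaker version of the theorem with the array indexed by all of $\mathbb{Z}$, symmetric, and nonnegative---enough for the FPP application. Your cutting approach is the more classical Liggett/Steele line and, if executed carefully, handles the one-sided signed case directly; the paper's approach buys a cleaner conceptual picture tied to the limiting Busemann structure that recurs elsewhere in the subject.

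Two small gaps in your sketch are worth flagging. First, in your $L^1$ argument the claim ``$(X_{0,n}/n)^- \le c$'' is not justified: assumption~1 gives only $\mathbb{E}X_{0,n}\ge -cn$, not a pointwise lower bound, so uniform integrability of the negative part needs a separate argument. Second, your cutting device invokes ``Birkhoff applied to the stationary ergodic cutting process provided by assumption~4,'' but assumption~4 only gives ergodicity of each fixed-$k$ block process, not of the full array under the shift; the stopping rule you describe depends on $X_{m,m+j}$ for variable $j\le M$, so making the density-of-bad-blocks step rigorous under just assumptions~3--4 (as opposed to full joint stationarity) requires care.
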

\begin{proof}
We will give a proof of a version of the subadditive ergodic theorem in the next section.
\end{proof}

We apply this theorem for a fixed $x \in \mathbb{Z}^d$ to the sequence
\[
X_{m,n} = T(mx,nx).
\]
Item 2 holds by the triangle inequality, whereas 3 and 4 hold by invariance of the environment under integer translations. In item 1 we can take any $c>0$, since $T \geq 0$ a.s. The only thing to check is that $\mathbb{E}T(0,nx) < \infty$ for each $n$. By subadditivity, and symmetry, it suffices to check that $\mathbb{E}T(0,e_1) < \infty$. To do this, we construct $2d$ edge-disjoint deterministic paths $\gamma_1, \ldots, \gamma_{2d}$ from 0 to $e_1$ and note
\[
\mathbb{E}T(0,e_1) \leq \mathbb{E}\min\{T(\gamma_1), \ldots, T(\gamma_{2d})\}.
\]
Now we can check the following result from \cite{CoxDurrett}:

\begin{lemma}
If $\mathbb{E}\min\{t_1, \ldots, t_{2d}\}<\infty$ for i.i.d. edge-weights $t_i$, then the right side above is finite.
\end{lemma}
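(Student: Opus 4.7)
The plan is to reduce the claim to a tail-integral computation, use that the paths $\gamma_1,\ldots,\gamma_{2d}$ being edge-disjoint makes $T(\gamma_1),\ldots,T(\gamma_{2d})$ independent, and then bound each single-path tail by a single-edge tail via a pigeonhole argument.

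First I would write
\[
\mathbb{E}\min_{1 \leq i \leq 2d} T(\gamma_i) = \int_0^\infty \mathbb{P}\Big(\min_i T(\gamma_i) > s\Big)\, ds
\]
by the layer-cake formula. Since no edge is shared across the $\gamma_i$ and the weights $(t_e)$ are i.i.d., the sums $T(\gamma_i) = \sum_{e \in \gamma_i} t_e$ are independent, so the integrand factors as $\prod_{i=1}^{2d} \mathbb{P}(T(\gamma_i) > s)$.

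Next, let $L = \max_i |\gamma_i|$, a finite constant depending only on $d$. For each $i$, the event $\{T(\gamma_i) > s\}$ forces at least one edge of $\gamma_i$ to carry weight exceeding $s/L$, since otherwise the sum of $|\gamma_i| \leq L$ nonnegative terms each at most $s/L$ would be $\leq s$. A union bound then yields $\mathbb{P}(T(\gamma_i) > s) \leq L\,\mathbb{P}(t_e > s/L)$. Taking the product over $i$ and using that $\mathbb{P}(t_e > s/L)^{2d} = \mathbb{P}(\min\{t_1,\ldots,t_{2d}\} > s/L)$ for i.i.d.\ copies,
\[
\mathbb{P}\Big(\min_i T(\gamma_i) > s\Big) \leq L^{2d}\, \mathbb{P}\big(\min\{t_1,\ldots,t_{2d}\} > s/L\big).
\]

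Finally I would plug this into the tail integral, split at some fixed $A>0$ and use the trivial bound $\leq 1$ on $[0,A]$ to handle the regime where the right-hand side may exceed $1$, and substitute $u = s/L$ on $[A,\infty)$ to conclude
\[
\mathbb{E}\min_i T(\gamma_i) \leq A + L^{2d+1}\,\mathbb{E}\min\{t_1,\ldots,t_{2d}\} < \infty
\]
by hypothesis. The scheme itself has no hard step; the one geometric input we need (and which is where all the work has already been done) is the existence of $2d$ edge-disjoint deterministic paths of uniformly bounded length between $0$ and $e_1$ in $\mathbb{Z}^d$. This is what makes the $T(\gamma_i)$ independent and thus lets the product-of-tails bound kick in, and it is an elementary construction in the cubic lattice.
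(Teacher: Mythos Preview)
Your proof is correct and follows essentially the same approach as the paper: layer-cake formula, independence from edge-disjointness, the pigeonhole/union bound $\mathbb{P}(T(\gamma_i)>s)\leq L\,\mathbb{P}(t_e>s/L)$, and then the identification of $\mathbb{P}(t_e>s/L)^{2d}$ with the tail of $\min\{t_1,\ldots,t_{2d}\}$. The only cosmetic difference is that the paper integrates the bound directly without splitting at a cutoff $A$ (this is harmless since the tail bound, even where it exceeds $1$, is still integrable).
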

\begin{proof}
Let $k$ be the maximal number of edges of any $\gamma_i$ and note that for each $i$,
\[
\mathbb{P}\left( T(\gamma_i) \geq \lambda \right) \leq k \mathbb{P}(t_e \geq \lambda/k),
\]
so that if we put $M = \min\{T(\gamma_1), \ldots, T(\gamma_{2d})\}$,
\begin{align*}
\mathbb{P}(M \geq \lambda) = \prod_{i=1}^{2d} \mathbb{P}(T(\gamma_i) \geq \lambda) &\leq k^{2d} \left( \mathbb{P}(t_e \geq \lambda/k) \right)^{2d} \\
&= k^{2d} \mathbb{P}\left( \min\{t_1, \ldots, t_{2d}\} \geq \lambda/k \right).
\end{align*}
Therefore
\[
\mathbb{E}M = \int_0^\infty \mathbb{P}(M \geq \lambda)~\text{d}\lambda \leq k^{2d} \int_0^\infty \mathbb{P}\left( \min\{t_1, \ldots, t_{2d}\} \geq \lambda/k \right)~\text{d}\lambda < \infty.\qedhere
\]
\end{proof}

By the subadditive ergodic theorem, then, we define
\[
g(x) = \lim_n \frac{T(0,nx)}{n},
\]
which is a.s. constant. We next extend $g$ to $\mathbb{Q}^d$ by taking such a rational $x$ and letting $m \in \mathbb{N}$ be such that $mx$ is an integer point. Then set
\[
g(x) = \lim_n \frac{T(0,mnx)}{mn} = \frac{1}{m} \lim_n \frac{T(0,n(mx))}{n} = \frac{1}{m} g(mx).
\]
$g$ thus defined on $\mathbb{Q}^d$ satisfies the following properties: for $x,y \in \mathbb{Q}^d$,
\begin{enumerate}
\item $g(x+y) \leq g(x) + g(y)$,
\item $g$ is uniformly continuous on bounded sets,
\item for $q \in \mathbb{Q}$, $g(qx) = |q|g(x)$,
\item $g$ is symmetric about the axes.
\end{enumerate}
Item 1 follows from the triangle inequality for $T$, and 4 follows from symmetries of the edge weights. Item 3 is an easy exercise, and 2 follows from 1: for $h=(h_1, \ldots, h_d)$,
\begin{align*}
|g(z) - g(z+h)| &\leq \max\{g(h),g(-h)\} = g(h) = g(h_1e_1 + \cdots + h_de_d)\\ 
&\leq g(e_1)(|h_1| + \cdots + |h_d|) \leq \|h\|_1 \mathbb{E}T(0,e_1).
\end{align*}
Then $g$ has a continuous extension to $\mathbb{R}^d$. The above properties extend to real arguments, so $g$ is a seminorm. (A norm, except it could have $g(x) = 0$ for some $x \neq 0$.) It is a result of Kesten \cite[Theorem~6.1]{aspects} that $g$ is a norm when $\mathbb{P}(t_e=0)<p_c$.

Now that we have ``radial'' convergence to a norm $g$, we need to patch together convergence in every direction to a type of uniform convergence. Here we do this under the unnecessary but simplifying assumption:
\[
\mathbb{P}(t_e \in [a,b]) = 1, \text{ where } 0 < a < b < \infty.
\]
This implies
\[
T(0,x), g(x) \in [a\|x\|_1, b\|x\|_1] \text{ for } x \in \mathbb{Z}^d.
\]
So define the event
\[
\Omega' = \{t_e \in [a,b] \text{ for all }e \} \cap \left\{ \lim_n \frac{T(0,nx)}{n} = g(x) \text{ for all } x \in \mathbb{Z}^d\right\}.
\]
Note that on the above event we actually have
\begin{equation}\label{eq: bellegrande}
\lim_{\alpha \to \infty} \frac{T(0,\alpha x)}{\alpha} = g(x) \text{ for all } x \in \mathbb{Z}^d.
\end{equation}
(Here $\alpha$ is real instead of just being an integer.) The reason is that we can bound
\begin{align*}
&\left| \frac{T(0,\alpha x)}{\alpha} - g(x)\right|\\ 
&\qquad\leq \left| \frac{T(0,\lfloor \alpha \rfloor x)}{\lfloor \alpha \rfloor} - \frac{T(0,\alpha x)}{\lfloor \alpha \rfloor} \right| + \left| \frac{T(0,\alpha x)}{\lfloor \alpha \rfloor} - \frac{T(0,\alpha x)}{\alpha} \right| + \left| \frac{T(0,\lfloor \alpha \rfloor x)}{\lfloor \alpha \rfloor} - g(x) \right| \\
&\qquad\leq \frac{T(\lfloor \alpha \rfloor x,\alpha x)}{\lfloor \alpha \rfloor} + T(0,\alpha x)\left| \frac{1}{\lfloor \alpha\rfloor} - \frac{1}{\alpha} \right|+ \left| \frac{T(0,\lfloor \alpha \rfloor x)}{\lfloor \alpha \rfloor} - g(x)\right|.
\end{align*}
By the assumed inequality $t_e \leq b$, the first two terms go to zero as $\alpha \to \infty$. The last term approaches zero because of condition of the second intersected event in the definition of $\Omega'$.

Fix $\omega \in \Omega'$, a set which has probability one, for the rest of the argument. We will show the equivalent statement
\[
\limsup_{x \to \infty} \frac{|T(0,x) - g(x)|}{\|x\|_1} = 0,
\]
so suppose it fails: for some $\epsilon>0$, there is a sequence $(x_n)$ going to infinity such that
\[
|T(0,x_n) - g(x_n)| \geq \epsilon \|x_n\|_1 \text{ for all } n.
\]
We may assume by compactness that $x_n/\|x_n\|_1 \to z$ for some $z$ with $\|z\|_1=1$. We will show that for some $x$ in a nearby direction to $z$, one cannot have $T(0,nx) / n \to g(x)$. (See Figure~\ref{fig: fig_1} for an illustration.) 

\begin{figure}[!ht]
\centering
\includegraphics[width=4in,trim={0 17cm 10cm 2cm},clip]{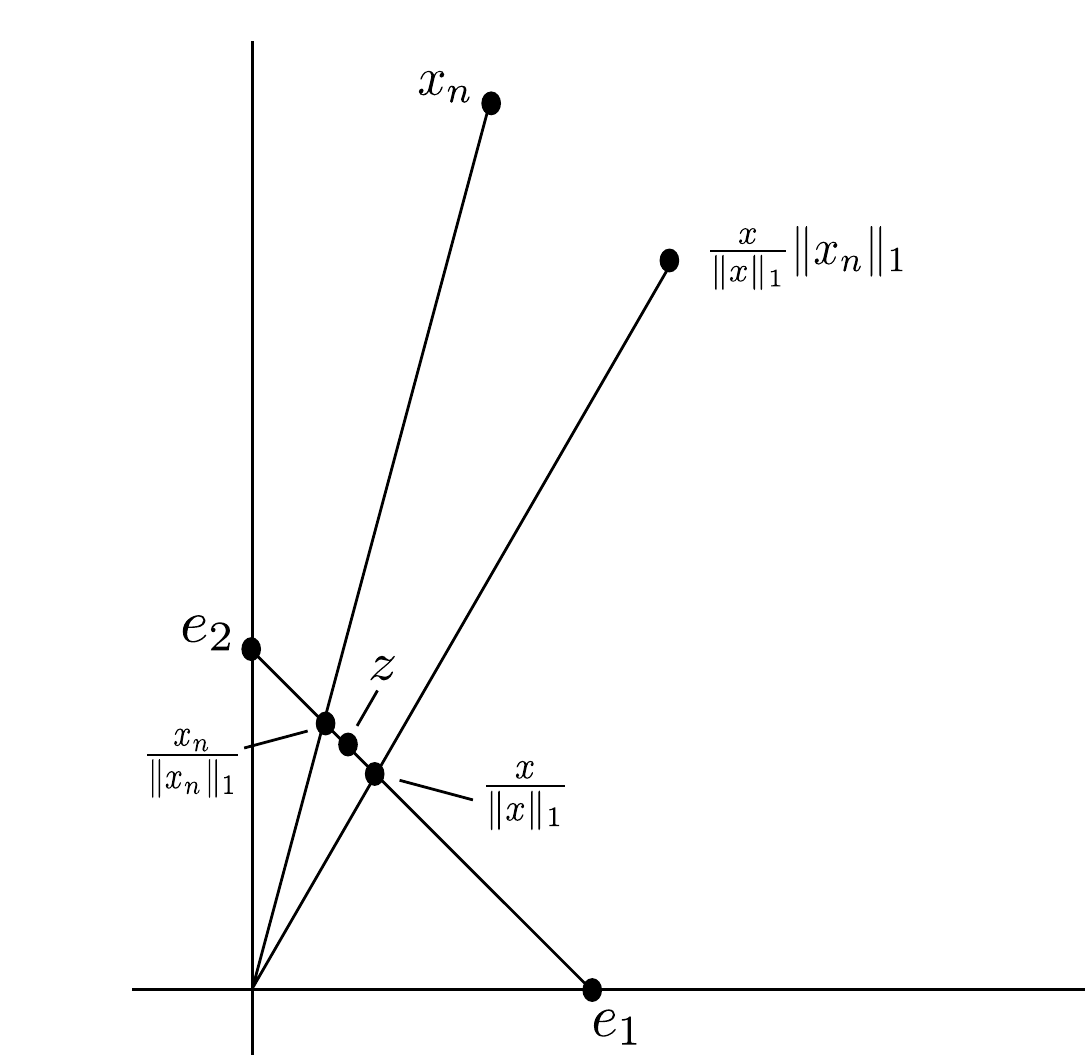}
\caption{Illustration of the argument given in the proof of the shape theorem. The points $\frac{x_n}{\|x_n\|_1}$, $z$, and $\frac{x}{\|x\|_1}$ are all of $\ell^1$-norm 1 and within $\ell^1$-distance $2\delta$ of each other. The point $x$ is chosen in $\mathbb{Z}^d$ so that we have radial convergence of $T(0,nx)/n$ to $g(x)$ in this direction. Because $x_n$ is close in direction to $x$, and our weights are bounded between $a$ and $b$, the passage time $T(0,x_n)$ cannot be too different (on a linear scale) than passage time from $0$ to the ``nearby point'' $\frac{x}{\|x\|_1}\|x_n\|_1$. The latter passage time is controlled, since it is from 0 to a multiple of $x$, so this prohibits the sequence $T(0,x_n)$ from fluctuating too much.}
\label{fig: fig_1}
\end{figure}

\noindent
Fix
\[
\delta \in \left( 0 , \frac{\epsilon}{4b+1} \right)
\]
and choose $x \in \mathbb{Z}^d$ such that
\[
\left\| \frac{x}{\|x\|_1} - z \right\|_1 < \delta,
\]
so that for $n$ large,
\[
\left\| \frac{x_n}{\|x_n\|_1} - \frac{x}{\|x\|_1} \right\|_1 < 2\delta.
\]
We will compare the passage time from $0$ to $x_n$ to the passage time to the ``nearby'' point $\|x_n\|_1 x/\|x\|_1$. Then
\begin{align*}
|T(0,x_n) - g(x_n)| 
&\leq \left|T(0,x_n) - T\left(0, \|x_n\|_1 \frac{x}{\|x\|_1} \right)\right|\\
&\qquad + \left| T\left( 0, \|x_n\|_1 \frac{x}{\|x\|_1} \right) - g\left( \|x_n\|_1 \frac{x}{\|x\|_1} \right) \right| \\
&\qquad + \left| g\left( \|x_n\|_1 \frac{x}{\|x\|_1} \right) - g(x_n) \right|.
\end{align*}
The first and last terms are bounded by
\[
b\left\| \|x_n\|_1 \frac{x}{\|x\|_1} - x_n \right\|_1 = b\|x_n\|_1 \left\| \frac{x}{\|x\|_1} - \frac{x_n}{\|x_n\|_1} \right\|_1 < 2b \delta \|x_n\|_1.
\]
However since we have radial convergence in direction $x$, \eqref{eq: bellegrande} gives
\[
T\left( 0, \|x_n\|_1 \frac{x}{\|x\|_1} \right) = g\left( \|x_n\|_1 \frac{x}{\|x\|_1} \right) + o(\|x_n\|_1),
\]
so for $n$ large, the second term is bounded by $\delta \|x_n\|_1$. In total,
\[
\epsilon \|x_n\|_1 \leq |T(0,x_n) - g(x_n)| \leq (4b+1)\delta \|x_n\|_1 < \epsilon \|x_n\|_1,
\]
a contradiction.
\end{proof}


\index{subadditive ergodic theorem}%
\subsubsection{Proof of subadditive ergodic theorem}
In this section, we prove a version of the subadditive ergodic theorem using ideas from \cite{DH2} (which turn out to be similar to those of Liggett's proof). The current version developed through my discussions with F.\ Rassoul-Agha, and it is his notes we give here.

We prove the following version of the theorem, which is weaker than that stated in the last section, but sufficient for our purposes in FPP.
\begin{theorem}
Let $(X_{m,n} : m,n \in \mathbb{Z})$ be an array of random variables satisfying the following assumptions.
\begin{enumerate}
\item For all $m,n \in \mathbb{Z}$, $X_{n,n} = 0$ and $X_{n,m} = X_{m,n}$,
\item for each $n \in \mathbb{Z}$, $\mathbb{E}|X_{0,n}| < \infty$,
\item for all $m,n,l \in \mathbb{Z}$, $X_{l,n} \leq X_{l,m} + X_{m,n}$,
\item for each $m \in \mathbb{Z}$, the sequence $(X_{m+1,m+k+1} : k \in \mathbb{Z})$ is equal in distribution to the sequence $(X_{m,m+k} : k \in \mathbb{Z})$, and
\item for each $k \geq 1$, the process $(X_{nk,(n+1)k} : n \in \mathbb{Z})$ is stationary and ergodic.
\end{enumerate}
Then
\begin{equation}\label{eq: to_show_erg_1}
g := \lim_{n \to \infty} \frac{\mathbb{E}X_{0,n}}{n} = \inf_{n\geq 1} \frac{\mathbb{E}X_{0,n}}{n} \text{ exists}
\end{equation}
and
\[
\lim_{n \to \infty} \frac{X_{0,n}}{n} = g \text{ a.s. and in }L^1.
\]
\end{theorem}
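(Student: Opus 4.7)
The plan follows the classical Kingman approach. Observe first that combining the symmetry in assumption (1) with the subadditivity in (3) yields $0 = X_{n,n} \leq X_{m,n} + X_{n,m} = 2X_{m,n}$, so $X_{m,n} \geq 0$ for all $m,n$. Existence of $g$ in \eqref{eq: to_show_erg_1} then follows from Fekete's lemma: setting $a_n = \mathbb{E}X_{0,n}$, the shift-invariance in distribution (4) gives $\mathbb{E}X_{m,m+n} = a_n$, subadditivity yields $a_{m+n} \leq a_m + a_n$, and $a_n \geq 0$, so $a_n/n \to \inf_k a_k/k =: g \in [0,\infty)$.

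For the upper bound $\limsup_n X_{0,n}/n \leq g$ almost surely, I would fix $k \geq 1$ and apply Birkhoff's ergodic theorem to the stationary ergodic sequence $(X_{jk,(j+1)k})_{j \geq 0}$ from assumption (5):
\[
\frac{1}{n}\sum_{j=0}^{n-1} X_{jk,(j+1)k} \to \mathbb{E}X_{0,k} \text{ a.s.}
\]
Iterated subadditivity gives $X_{0,nk} \leq \sum_{j=0}^{n-1} X_{jk,(j+1)k}$, so $\limsup_n X_{0,nk}/(nk) \leq \mathbb{E}X_{0,k}/k$. To extend from the subsequence $(nk)$ to all $n$, write $n = qk + r$ with $0 \leq r < k$ and bound $X_{0,n} \leq X_{0,qk} + C_q$, where $C_q := \sum_{i=0}^{k-1} X_{qk+i,\, qk+i+1}$. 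The sequence $(C_q)_{q \geq 0}$ is stationary with $\mathbb{E}C_q \leq k\mathbb{E}X_{0,1} < \infty$, and a first-moment Borel-Cantelli ($\sum_q \mathbb{P}(C_q \geq \epsilon q) \leq \mathbb{E}C_0/\epsilon$) gives $C_q/q \to 0$ a.s. Intersecting the resulting null sets over countably many $k$ and letting $\mathbb{E}X_{0,k}/k \to g$ produces the upper bound.

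The matching lower bound $\liminf X_{0,n}/n \geq g$ a.s.\ is the main obstacle. A direct subadditive comparison (with symmetry) yields $|X_{1,n+1} - X_{0,n+1}| \leq X_{0,1}$, hence $\liminf_n X_{0,n}/n$ is invariant under the shift $\theta:(X_{m,n}) \mapsto (X_{m+1,n+1})$, and so by ergodicity equals a constant $c$ a.s. Suppose for contradiction $c + 2\epsilon < g$. Define stopping times $\tau_m := \min\{k \geq 1 : X_{m,m+k} \leq (c+\epsilon)k\}$; each is a.s.\ finite because shift invariance gives $\liminf_k X_{m,m+k}/k = c$. Truncating at a level $L$, I would cover $[0,N]$ greedily by ``good'' blocks of length $\tau_{n_j}$ whenever $\tau_{n_j} \leq L$ (each contributing at most $(c+\epsilon)(n_{j+1}-n_j)$ to $X_{0,N}$ by subadditivity) and otherwise by ``bad'' single-step blocks. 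Birkhoff applied to the stationary ergodic indicator $\mathbf{1}\{\tau_m > L\}$ shows the density of bad block starts converges to $\mathbb{P}(\tau_0 > L)$, which vanishes as $L \to \infty$. Taking expectations, dividing by $N$, and sending $N \to \infty$ then $L \to \infty$ forces $g \leq c + \epsilon$, the required contradiction. The delicate point is the accounting for the bad-block overshoot contribution, which is integrable thanks to $\mathbb{E}X_{0,1} < \infty$.

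Finally, $L^1$ convergence comes for free: $0 \leq X_{0,n}/n \leq (1/n)\sum_{j=0}^{n-1} X_{j,j+1}$, the right-hand side converges in $L^1$ to $\mathbb{E}X_{0,1}$ by Birkhoff, so $(X_{0,n}/n)$ is uniformly integrable and Vitali upgrades the almost sure limit $g$ to an $L^1$ limit.
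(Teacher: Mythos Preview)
Your nonnegativity, Fekete, upper bound, and $L^1$ arguments are correct and essentially match the paper's. The substantive divergence is in the lower bound: you run the classical Kingman stopping-time/greedy-cover argument, whereas the paper constructs ``Busemann-type'' increments. Concretely, the paper takes weak subsequential limits of the averaged laws of $(X_{m,n},\,X_{r,k}-X_{l,k})$ to produce an \emph{additive} process $(B_{m,n})$ with $|B_{m,n}|\le X_{m,n}$ and $\mathbb{E}_\nu B_{0,1}=g$; Birkhoff on $(B_{k,k+1})$ gives $B_{0,n}/n\to \mathbb{E}_\nu[B_{0,1}\mid\mathcal{I}]$, which combined with the already-proved $\limsup X_{0,n}/n\le g$ and the mean constraint forces the conditional expectation to equal $g$, hence $\liminf X_{0,n}/n\ge g$.

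There is a genuine gap in your lower bound as written. You assert that $c:=\liminf X_{0,n}/n$ is a.s.\ constant ``by ergodicity'', and that $(\mathbf{1}\{\tau_m>L\})_m$ is stationary \emph{ergodic}. But assumption~(5) only gives ergodicity of each marginal process $(X_{nk,(n+1)k})_{n}$; it does not give ergodicity of the full shift $\theta$ on the array, and neither $c$ nor $\tau_m$ is measurable with respect to those marginal $\sigma$-fields. So both invocations of ergodicity are unjustified under the stated hypotheses. This is exactly the subtlety the paper's approach is engineered to avoid: rather than needing any invariant quantity to be constant a priori, the Busemann construction bounds $\mathbb{E}_\nu[B_{0,1}\mid\mathcal{I}]\le g$ pointwise and then uses $\mathbb{E}_\nu B_{0,1}=g$ to force equality.

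Your strategy is salvageable without full-shift ergodicity: keep $c$ random, note $c\le g$ a.s.\ from the upper bound, and take expectations directly in your covering inequality to get $\mathbb{E}X_{0,N}/N \le \mathbb{E}c+\epsilon + \mathbb{E}[X_{0,1}\mathbf{1}\{\tau_0>L\}] + L\mathbb{E}X_{0,1}/N$; then $N\to\infty$, $L\to\infty$ (dominated convergence, since $\tau_0<\infty$ a.s.) yields $g\le\mathbb{E}c+\epsilon$, hence $c=g$ a.s. But you should make this explicit rather than appeal to ergodicity you do not have.
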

\begin{proof}
Note first that \eqref{eq: to_show_erg_1} holds by subadditivity of the sequence $\mathbb{E}X_{0,n}$ (which itself follows from stationarity in item (4)), and 
\index{Fekete's lemma}%
Fekete's lemma (mentioned above Theorem~\ref{thm: subadd}). Also observe that (1) and (3) imply that $0 = X_{n,n} \leq X_{n,m} + X_{m,n} = 2 X_{m,n}$; that is, the random variables are nonnegative.

For $k \geq 1$, let $\mu_k$ be the distribution of 
\[
(X_{m,n}, X_{r,k} - X_{l,k})_{m,n,r,l\in \mathbb{Z}}.
\]
This is a probability measure on the space of sequences 
\[
\widetilde \Omega = \{ (x_{m,n}, b_{r,l})_{m,n,r,l \in \mathbb{Z}}\},
\]
equipped with the product topology and its Borel sigma-field. Define the measure $\nu_L$ as
\[
\nu_L = \frac{1}{L} \sum_{k=1}^L \mu_k.
\]
By subadditivity, we have
\[
X_{m,k} \leq X_{m,n} + X_{n,k} \quad \text{and} \quad X_{n,k} \leq X_{n,m}+X_{m,k} = X_{m,n} + X_{m,k},
\]
so
\begin{equation}\label{eq: erg_eq_1}
|X_{m,k} - X_{n,k}| \leq X_{m,n} \text{ for all }k.
\end{equation}

{\bf Exercise.} Prove that $(\nu_L : L \geq 1)$ is a tight family of probability measures.\medskip

By the above exercise and Prohorov's theorem (see, for instance \cite[Theorem~3.2.7]{DurrettBook}), there is a subsequence $(L_j)$ such that $\nu_{L_j}$ converges weakly to a limiting probability measure $\nu$ on $\widetilde \Omega$. This measure has the property that its marginal on the sequences $(x_{m,n} : m,n \in \mathbb{Z})$ is precisely the distribution of $(X_{m,n} : m,n \in \mathbb{Z})$. Indeed, for bounded continuous $f = f(x_{m,n} : m,n \in \mathbb{Z})$, we have
\begin{align*}
\mathbb{E}_\nu f(X_{m,n} : m,n \in \mathbb{Z}) &= \lim_{j \to \infty} \frac{1}{L_j} \sum_{k=1}^{L_j} \mathbb{E}_{\mu_k} f(X_{m,n} : m,n \in \mathbb{Z}) \\
&= \lim_{j \to \infty} \frac{1}{L_j} \sum_{k=1}^{L_j} \mathbb{E}f(X_{m,n} : m,n \in \mathbb{Z}) \\
&= \mathbb{E}f(X_{m,n} : m,n \in \mathbb{Z}).
\end{align*}

The probability measure $\nu$ has the following properties (writing as above $X_{m,n}$, $B_{m,n}$ as the coordinate random variables on $\widetilde \Omega$)
\begin{enumerate}
\item[(a)] $|B_{m,n}| \leq X_{m,n}$, $\nu$-a.s.,
\item[(b)] $B_{l,n} = B_{l,m} + B_{m,n}$, $\nu$-a.s., and for all $l,m,n \in \mathbb{Z}$,
\item[(c)] for any $N \geq 1$, $(B_{nN,(n+1)N} : n \in \mathbb{Z})$ is stationary, and
\item[(d)] $\mathbb{E}_\nu B_{0,1} = g$.
\end{enumerate}
We will now (mostly) justify the above four points. We will use the following fact:\medskip

{\bf Exercise.} Suppose that $(A_n,B_n)$ converge in distribution to $(A,B)$. Assume also that $A_n \leq B_n$ a.s. Show that $A \leq B$ a.s. Prove the same result for $\leq$ replaced by $=$.\medskip

Property (a) comes from \eqref{eq: erg_eq_1} and applying the above exercise. Property (b) comes by another application of the exercise and telescoping the sum
\[
X_{l,k} - X_{n,k} = X_{l,k} - X_{m,k} + X_{m,k} - X_{n,k}.
\]
Property (c) is a consequence of stationarity (item 4) in the statement of the theorem, and it is why averaging was used to define $\nu_L$. More precisely, if $f = f(b_{nk,(n+1)k} : n \in \mathbb{Z})$ is bounded and continuous, then
\begin{align*}
&\mathbb{E}_\nu f(B_{nN,(n+1)N} : n \in \mathbb{Z}) = \lim_{j \to \infty} \frac{1}{L_j} \sum_{k=1}^{L_j} \mathbb{E}_{\mu_k} f(B_{nN,(n+1)N} : n \in \mathbb{Z}) \\
&\qquad\qquad= \lim_{j \to \infty} \frac{1}{L_j} \sum_{k=1}^{L_j} \mathbb{E}f(X_{nN,k}-X_{(n+1)N,k} : n \in \mathbb{Z}) \\
&\qquad\qquad= \lim_{j \to \infty} \frac{1}{L_j} \sum_{k=1-N}^{L_j-N} \mathbb{E}f(X_{nN,k}-X_{(n+1)N,k} : n \in \mathbb{Z}) \\
&\qquad\qquad= \lim_{j \to \infty} \frac{1}{L_j} \sum_{k=1-N}^{L_j-N} \mathbb{E}f(X_{(n+1)N,k+N}-X_{(n+2)N,k+N} : n \in \mathbb{Z}) \\
&\qquad\qquad= \lim_{j \to \infty} \frac{1}{L_j} \sum_{k=1}^{L_j} \mathbb{E}f(X_{(n+1)N,k} - X_{(n+2)N,k} : n \in \mathbb{Z}) \\
&\qquad\qquad= \lim_{j \to \infty} \frac{1}{L_j} \sum_{k=1}^{L_j} \mathbb{E}_{\mu_k} f(B_{(n+1)N,(n+2)N} : n \in \mathbb{Z}) \\
&\qquad\qquad= \mathbb{E}_\nu f(B_{(n+1)N,(n+2)N} : n \in \mathbb{Z}).
\end{align*}
The third equality follows from the fact that $f$ is bounded, and so changing finitely many terms in the sum does not affect the limit. The fourth equality is from item 4 in the statement of the theorem.

To show item (d), we will give the idea, and the rest is an exercise. If we knew that the coordinates $B_{m,n}$ were bounded, then weak convergence would give us
\begin{align*}
\mathbb{E}_\nu B_{0,1} = \lim_{j \to \infty} \frac{1}{L_j} \sum_{k=1}^{L_j} \mathbb{E}_{\mu_k} B_{0,1} &= \lim_{j \to \infty} \frac{1}{L_j} \sum_{k=1}^{L_j} \mathbb{E}(X_{0,k} - X_{1,k}) \\
&= \lim_{j \to \infty} \frac{1}{L_j} \left( \sum_{k=1}^{L_j} \mathbb{E}X_{0,k} - \sum_{k=1}^{L_j} \mathbb{E}X_{1,k} \right) \\
&= \lim_{j \to \infty} \frac{1}{L_j} \left( \sum_{k=1}^{L_j} \mathbb{E}X_{0,k} - \sum_{k=1}^{L_j} \mathbb{E}X_{0,k-1} \right) \\
&= \lim_{j \to \infty} \frac{1}{L_j} \mathbb{E}X_{0,L_j} \\
&= g.
\end{align*}
(The last limit holds by \eqref{eq: to_show_erg_1}.) However, $f$ is not bounded.\medskip

{\bf Exercise.} Fix the above argument by truncating and using \eqref{eq: erg_eq_1}.\medskip

Now, abbreviate $m_n = \lfloor n/N \rfloor$. Subadditivity implies that
\[
X_{0,n} \leq X_{0,N} + X_{N,2N} + \cdots + X_{(m_n-1)N,m_nN} + X_{m_nN,nN}.
\]

{\bf Exercise.} Prove that if $f$ is an $L^1$ function defined on a probability space whose measure is invariant under a measurable map $\theta$, then $f \circ \theta^n / n$ converges to 0 a.s.\medskip

Since the process $(X_{nN, (n+1)N} : n \in \mathbb{Z})$ is assumed to be ergodic, by the above exercise and the ergodic theorem,
\begin{align*}
\limsup_{n \to \infty} \frac{1}{n} X_{0,n} &\leq \lim_{n \to \infty} \frac{m_n}{n} \frac{X_{0,N} +\cdots + X_{(m_n-1)N,m_nN}}{m_n} \\
&+ \lim_{n \to \infty} \frac{m_n}{n} \frac{\max_{1 \leq l \leq N} X_{0,l} \circ \theta^{m_n N}}{m_n} \\
&= \frac{1}{N} \mathbb{E}X_{0,N}.
\end{align*}
Here, the map $\theta$ acts on a sequence $a=(a_n)_{n \in \mathbb{Z}}$ by shifting to the left: $(\theta a)_n = a_{n+1}$. The maximum comes because $n -N \leq m_n N \leq n$, and so $n-m_n N \in \{1, \ldots, N\}$. Taking infimum over all $N$ and applying \eqref{eq: to_show_erg_1}, we have
\begin{equation}\label{eq: erg_eq_2}
\limsup_{n \to \infty} \frac{1}{n} X_{0,n} \leq g.
\end{equation}
Then by property (a) of the process $(B_{m,n})$, we have $\nu$-a.s.
\[
\lim_{n \to \infty} \frac{B_{0,n}}{n} \leq \limsup_{n \to \infty} \frac{X_{0,n}}{n} \leq g.
\]
Since $(B_{n,n+1})$ is stationary, additivity (item (b)) and the ergodic theorem imply that $\nu$-a.s.,
\[
\lim_{n \to \infty} \frac{B_{0,n}}{n} = \lim_{n \to \infty} \frac{1}{n} \sum_{k=0}^{n-1} B_{k,k+1} = \mathbb{E}_\nu[B_{0,1} \mid \mathcal{I}],
\]
where $\mathcal{I}$ is the invariant sigma-field. Putting these two displays together, we obtain
\[
\mathbb{E}_\nu[B_{0,1} \mid \mathcal{I}] \leq g.
\]
However, property (d) gives
\[
\mathbb{E}_\nu \left[ \mathbb{E}_\nu[B_{0,1} \mid \mathcal{I}]\right] = \mathbb{E}_\nu B_{0,1} = g,
\]
and so
\[
\lim_{n \to \infty} \frac{1}{n}B_{0,n} = g,~\nu\text{-a.s.}.
\]

Apply item (a) again, combined with \eqref{eq: erg_eq_2}, to obtain
\[
g = \lim_{n \to \infty} \frac{1}{n} B_{0,n} \leq \liminf_{n \to \infty} \frac{1}{n} X_{0,n} \leq \limsup_{n \to \infty} \frac{1}{n} X_{0,n} \leq g.
\]
This means that $\lim_{n \to \infty} \frac{1}{n} X_{0,n} = g$ $\nu$-a.s. Since this is the same as $\mathbb{P}$-a.s., we are done.
\end{proof}

\begin{remark}
If we knew the existence of a.s. limits $B_{m,n} = \lim_{k \to \infty} (X_{m,k} - X_{n,k})$, then the proof would be more direct. In other words, there would be no need for the measures $\mu_k$ and $\nu_L$, and one could prove properties (a)-(d) and deduce the result similarly to the above.
\end{remark}

\begin{remark}
In the version of the theorem in the previous section, one only takes an array $(X_{m,n})$ with $n > m$. In this case, one needs an extra assumption which in a sense replaces nonnegativity, and it appears as the second part of condition (1): $\mathbb{E}X_{0,n} \geq -cn$ for a constant $c>0$. Then one needs an additional argument for tightness in the proof.
\end{remark}

\subsubsection{High dimensions}\label{sec: high_dimensions_FPP}

In FPP, the shape theorem gives that $\mathcal{B}$ is convex, compact, with nonempty interior, and with all the symmetries of $\mathbb{Z}^d$. Many different sets have this property, in particular all the $\ell^p$ balls for $p \in [1,\infty]$. So the shape theorem allows in principle a cube ($\ell^\infty$ ball) and a diamond ($\ell^1$ ball), and therefore says nothing about strict convexity, flat edges, corners, or whether the shape is a polygon. 

For $(t_e)$ that are i.i.d. and, say, continuous, the following properties are expected for $\mathcal{B}$.
\begin{enumerate}
\item $\mathcal{B}$ is 
\index{strict convexity}%
strictly convex. That is, for any distinct $x,y \in \mathcal{B}$ and $\lambda \in (0,1)$ the point $\lambda x + (1-\lambda)y$ is in the interior of $\mathcal{B}$. Thus $\mathcal{B}$ should have no flat facets (like a polygon has).
\item $\mathcal{B}$ has no ``corners.'' That is, the boundary of $\mathcal{B}$ should be differentiable. One way to say this is in terms of supporting hyperplanes. A hyperplane in $\mathbb{R}^d$ is a set of the form $\{x = (x_1, \ldots, x_d) : x \cdot y = a\}$ for some $y \in \mathbb{R}^d$ and $a \in \mathbb{R}$ (where `$\cdot$' is the standard dot product $x \cdot y = \sum_{i=1}^d x_iy_i$). A hyperplane $H$ is supporting for $\mathcal{B}$ at $z \in \partial \mathcal{B}$ if $H$ contains $z$ but $\mathcal{B}$ intersects at most one component of $H^c$. By the Hahn-Banach Theorem, since $\mathcal{B}$ is convex and bounded, each $z \in \partial \mathcal{B}$ has a supporting hyperplane. We say that $\partial \mathcal{B}$ is differentiable if each $z \in \partial \mathcal{B}$ has exactly one supporting hyperplane. (This rules out ``corners'' for $\mathcal{B}$, as in the case of a polygon.)
\item Combining the above two cases, but weaker, the set $\mathcal{B}$ should not be a polygon. To state this precisely, we say that $\mathcal{B}$ should have infinitely many extreme points. An extreme point $x \in \mathcal{B}$ is a point which is not on the interior of a line segment with endpoints in $\mathcal{B}$. Precisely, whenever we write $x = \lambda z + (1-\lambda)y$ with $z,y \in \mathcal{B}$ and $\lambda \in (0,1)$, we have $z=y$. So $\mathcal{B}$ has infinitely many extreme points.
\item The boundary $\partial \mathcal{B}$ should have uniformly positive 
\index{curvature}%
curvature. In other words, near every boundary point, the boundary should locally look like the boundary of a Euclidean ball with bounded radius.
\end{enumerate}

Here we state the main high-dimensional asymptotic result. It says that for distributions with no atom at 0 but with 0 in the support (like exponential), for high dimensions, the limit shape is not a Euclidean ball, an $\ell^1$-ball, or an $\ell^\infty$-ball. Since we saw that the Eden model is equivalent to FPP with exponential weights, this shows the same result for the Eden model in high dimensions. For the statement, we define the balls $\mathbf{D}, \mathbf{B},\mathbf{C}$ to be the $\ell^1,\ell^2,$ and $\ell^\infty$-balls of radius $g(e_1)^{-1}$. We will also make the following assumptions from \cite{AT}:
\begin{equation}\label{eq: moment_AT}
\mathbb{E}t_e<\infty,
\end{equation}
and for some $a \geq 0$,
\begin{equation}\label{eq: others_AT}
F(0) = 0 \text{ and } \left|\frac{\mathbb{P}(t_e \leq x)}{x} - a \right| = O(|\log x|^{-1}) \text{ as } x \downarrow 0.
\end{equation}

The following theorem is from Auffinger-Tang \cite{AT}, which weakens various assumptions (widens the class of distributions in particular) of the work of previous authors. Some earlier work was done by Kesten \cite{aspects}, Dhar \cite{Dhar}, Couronn\'e-Enriquez-Gerin \cite{CEG11}, and Martinsson \cite{Martinsson}.
\begin{theorem}
Assume \eqref{eq: moment_AT} and \eqref{eq: others_AT}. For all large $d$, in FPP on $\mathbb{Z}^d$ with fixed weights $(t_e)$, one has
\[
\mathbf{D} \subset \mathcal{B} \subset \mathbf{C} \text{ and } \mathcal{B} \neq \mathbf{B}, \mathbf{C}, \text{ or } \mathbf{D}.
\]
\end{theorem}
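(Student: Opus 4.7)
The proof naturally decomposes into two inclusions $\mathbf{D} \subset \mathcal{B} \subset \mathbf{C}$ and three non-equalities $\mathcal{B}\neq\mathbf{B},\mathbf{C},\mathbf{D}$. The first inclusion is immediate from subadditivity and the coordinate symmetries of $g$: for any $x$ one has $g(x)\leq\sum_i |x_i|g(e_i)=g(e_1)\|x\|_1$, so $\|x\|_1\leq g(e_1)^{-1}$ forces $x\in\mathcal{B}$. The reverse inclusion $\mathcal{B}\subset\mathbf{C}$ is equivalent to the ``box bound'' $g(x)\geq g(e_1)\|x\|_\infty$, which I would cite from Kesten's work, where it is proved by a coordinate projection/monotonicity argument on passage times.

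The heart of the argument is the high-dimensional asymptotic analysis of $g$ in the two distinguished directions $e_1$ and $(1,\ldots,1)$. Under the hypothesis $\mathbb{P}(t_e\leq x)/x\to a$ as $x\downarrow 0$, the plan is to establish $g(e_1)=\Theta(1/d)$ with $d\cdot g(e_1)\to\rho_1(a)\in(0,\infty)$ as $d\to\infty$, and $g((1,\ldots,1))=\Theta(1)$ with $g((1,\ldots,1))\to\rho_2(a)\in(0,\infty)$. The upper bound on $g(e_1)$ is the subtle ingredient: because an edge weight has density roughly $a$ near zero, the minimum of $\Theta(d)$ i.i.d.\ such weights is $\Theta(1/(ad))$; a greedy branching construction exploits the $2d-1$ forward neighbors at each step to concatenate cheap local detours into a near-$e_1$-directed path of length $n(1+o(1))$ and total weight $\sim n\rho_1(a)/d$. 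The matching lower bound and the estimates for $g((1,\ldots,1))$ are obtained by similar but simpler counting/coupling arguments; $g((1,\ldots,1))$ does not decay with $d$ essentially because any path from $0$ to $n(1,\ldots,1)$ must realize a coordinate displacement of $n$ in each of $d$ coordinates, so the cheapest assembly of $nd$ edges still contributes a $\Theta(1)$ rate per unit of progression in $n$.

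Once these asymptotics are in hand, the non-equalities follow by testing against $(1,\ldots,1)$. If $\mathcal{B}=\mathbf{C}$ then $g(x)=g(e_1)\|x\|_\infty$, which forces $g((1,\ldots,1))=g(e_1)$; but $g((1,\ldots,1))=\Theta(1)$ while $g(e_1)=\Theta(1/d)$, contradicting this for all large $d$. If $\mathcal{B}=\mathbf{B}$ then $g(x)=g(e_1)\|x\|_2$, forcing $g((1,\ldots,1))=\sqrt{d}\,g(e_1)=\Theta(1/\sqrt d)\to 0$, again incompatible with $g((1,\ldots,1))=\Theta(1)$. The subtlest case is $\mathcal{B}\neq\mathbf{D}$: the inclusion $\mathbf{D}\subset\mathcal{B}$ already gives $g((1,\ldots,1))\leq d\cdot g(e_1)$, so what must be shown is the strict comparison $\rho_2(a)<\rho_1(a)$, i.e.\ that diagonal transit is asymptotically strictly cheaper than concatenating axis-parallel moves.

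The main obstacle is thus the combination of (a) proving the sharp asymptotic $d\cdot g(e_1)\to\rho_1(a)$ under the weak regularity hypothesis \eqref{eq: others_AT}, and (b) establishing the strict gap $\rho_2(a)<\rho_1(a)$ required for $\mathcal{B}\neq\mathbf{D}$. The sharp axis asymptotic was developed in successive works of Kesten, Dhar, Couronn\'e--Enriquez--Gerin, and Martinsson under stronger moment or regularity assumptions; the Auffinger--Tang contribution is in large part a refinement of the branching construction that permits relaxation to \eqref{eq: moment_AT}--\eqref{eq: others_AT}. The strict gap $\rho_2(a)<\rho_1(a)$ is the genuinely multidimensional content of the theorem, encoding that in high dimensions the optimal route is not merely a concatenation of low-dimensional pieces.
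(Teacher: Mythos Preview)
Your overall architecture is right --- compare the axis and diagonal time constants --- but the central asymptotic is off by a factor of $\log d$, and this inverts your assessment of which case is delicate. The paper's key input is not $d\cdot g(e_1)\to\rho_1(a)\in(0,\infty)$ but rather
\[
\lim_{d\to\infty}\frac{g(e_1)\,d}{\log d}=\frac{1}{2a},
\]
so that $g(e_1)\sim\frac{\log d}{2ad}$. Your heuristic for the upper bound on $g(e_1)$ --- take the cheapest of the $2d-1$ forward neighbors and obtain a near-$e_1$-directed path of length $n(1+o(1))$ --- does not work: that greedy walk wanders, and there is no reason it reaches $ne_1$ in $O(n)$ steps. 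The greedy-minimum construction you describe is in fact the paper's argument for the \emph{diagonal} direction: choosing at each step the minimum of the $d$ positively-oriented edges produces a path reaching the hyperplane $H_n=\{y:\sum_i y_i=n\}$ in exactly $n$ steps, yielding the uniform upper bound $g(1,\dots,1)\le C$. The genuine axis estimate (Dhar, Couronn\'e--Enriquez--Gerin, Martinsson, Auffinger--Tang) is a branching-type analysis and the $\log d$ is unavoidable.

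With the correct scaling the case $\mathcal B=\mathbf D$ becomes the \emph{easiest}, not the subtlest: if $\mathcal B$ were the $\ell^1$-ball one would have $g(1,\dots,1)=d\cdot g(e_1)\sim\frac{\log d}{2a}\to\infty$, immediately contradicting $g(1,\dots,1)\le C$; no delicate strict-gap comparison $\rho_2(a)<\rho_1(a)$ between two finite limits is required. Conversely, ruling out $\mathbf B$ and $\mathbf C$ now needs a uniform \emph{lower} bound on $g(1,\dots,1)$ (since $\sqrt d\,g(e_1)\to 0$ and $g(e_1)\to 0$), which your sketch asserts but does not really justify: the observation that any path to $n(1,\dots,1)$ uses at least $nd$ edges does not by itself give a $d$-independent lower rate, because individual edge weights can be arbitrarily small near zero.
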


\begin{itemize}
\item The proof proceeds by showing the asymptotic
\[
\lim_{d \to \infty} \frac{g(e_1)d}{\log d} = \frac{1}{2a}.
\]
For example, if $\mathcal{B}$ were the $\ell^1$-ball, one would have for $e = (1, \ldots, 1)$,
\[
g(e) = dg(1/d, \ldots, 1/d) = dg(e_1) \sim \frac{\log d}{2a}.
\]
However one can show that $g(e) \leq C$ for some constant $C$. In fact, Combining the work of Martinsson \cite{Martinsson} and Couronn\'e-Enriquez-Gerin \cite{CEG11}, one has can show that $\lim_{d \to \infty} g(e)$ exists for the exponential distribution and is related to the nonzero solution of $\coth \alpha = \alpha$.
\item It is not hard to give the bound for exponential weights $g(e) \leq 1$. Construct a path $\gamma$ with vertices $x_0, x_1, \ldots$ as follows: set $x_0=0$ and for $n \geq 0$, starting from $x_n$, take the minimal weight edge of the $d$ different edges leading in directions $e_1, \ldots, e_d$ (the positive coordinate directions) and call its endpoint $x_{n+1}$. We see that if $X_1, X_2, \ldots$ are the weights of the first edge, second edge, and so on, then $X_i$ is the minimum of $d$ i.i.d. exponential random variables, so is an exponential with mean $1/d$. Thus putting $H_n = \{x = (x_1, \ldots, x_d) : \sum_i x_i = n\}$ one obtains
\[
\mathbb{E}T(0,H_n) \leq \sum_{i=1}^n \mathbb{E}X_i = n/d.
\]
But one can show using the shape theorem that
\[
g(e) = \lim_n \frac{\mathbb{E}T(0,ne)}{n} = \lim_n \frac{\mathbb{E}T(0,H_{dn})}{n} \leq \lim_n \frac{\sum_{i=1}^{nd} \mathbb{E}X_i}{n} = 1
\]
\end{itemize}

\index{flat edges}%
\subsubsection{Flat edges}\label{sec: flat_edge}

A main question in FPP is: which compact convex sets are realizable as limit shapes? The question above is completely open in the i.i.d. setting. Interestingly, though, this is solved by H\"aggstr\"om and Meester '95 \cite{HM} in the case of stationary (not necessarily i.i.d.) passage times. 
\begin{theorem}[H\'aggstr\"om-Meester]\label{th:Hag-Mee} 
Any non-empty compact, convex set $C$ that has the symmetries of $\mathbb{Z}^d$ is a limit shape for some FPP model with weights distributed according to a stationary (under translations of $\mathbb Z^d$) and ergodic measure.
\end{theorem}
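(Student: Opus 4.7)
My plan is to reduce to the case when $C$ is a polytope with the $\mathbb{Z}^d$ symmetries, realize each such polytope as a limit shape by an explicit ``highway'' construction, and then combine polytopal models across well-separated scales to capture an arbitrary $C$. First, since $C$ is convex, compact, and symmetric, I would write $C = \bigcap_{k \geq 1} P_k$ as a decreasing intersection of polytopes $P_k \supset C$ each sharing the symmetries of $\mathbb{Z}^d$: pick a countable dense subset of $\partial C$, take supporting half-spaces there, symmetrize under the lattice symmetry group, and intersect the first $k$ with a bounding cube. Equivalently, letting $g_C$ denote the norm whose unit ball is $C$, we have $g_C(x) = \sup_k g_{P_k}(x)$ pointwise.

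For a single polytope $P$ with the required symmetries, write $g_P(x) = \max_{j \leq m} \langle u_j, x \rangle$, where $u_1,\ldots,u_m$ are the vertices of the polar polytope $P^\circ$, a finite family closed under the symmetry group. I would construct an FPP model whose geodesics travel along ``highways'' aligned with rational directions approximating each $u_j$: via a translation-invariant point process, lay down long straight segments of fast edges with speed chosen to produce marginal cost $1/\|u_j\|_1$ per unit of $\langle u_j, \cdot\rangle$-displacement, and assign all remaining edges some large weight. By tuning the densities of the highway families, the cheapest path from $0$ to $nx$ will use highways in the direction $u_{j^\ast}$ attaining $\max_j \langle u_j, x\rangle$, giving passage time $\sim n\, g_P(x)$. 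Stationarity under $\mathbb{Z}^d$-shifts follows from the point-process construction, and ergodicity from independence across well-separated regions.

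To reach a general $C$, I would superpose the polytopal models at well-separated scales. Fix $L_1 \ll L_2 \ll \cdots$ and for each $k$ install an independent, uniformly shifted copy of the highway environment for $P_k$, rescaled so its highways have characteristic length $L_k$; the combined weight on an edge is the minimum over scales. The random shifts restore $\mathbb{Z}^d$-stationarity, and independence across scales and blocks gives ergodicity. A path can mimic any single $P_k$-geodesic, yielding $g_{\mathrm{new}}(x) \leq g_{P_k}(x)$ for each $k$, hence $g_{\mathrm{new}} \leq g_C$. The main obstacle is the matching lower bound: one must prevent a geodesic from splicing together highways at many scales to beat $g_C$. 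This requires choosing $L_{k+1}/L_k$ large enough that, for paths of length $n$, only scales with $L_k \lesssim n$ contribute meaningfully and finer scales are too sparse to help, while at the matching scale $L_k \approx n$ the passage time is bounded below by $g_{P_k}(x) - o(1)$ via a block-and-subadditivity argument applied to the combined model. Combining these estimates and applying the ergodic shape theorem (Boivin's version, cited in the excerpt) to the combined model then yields limit shape $\bigcap_k P_k = C$.
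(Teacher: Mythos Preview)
The paper does not prove this theorem; it merely states the result and attributes it to H\"aggstr\"om--Meester \cite{HM}, so there is no in-paper proof to compare against. Your outline is in the spirit of the original H\"aggstr\"om--Meester argument: approximate $C$ from outside by symmetric polytopes $P_k$, realize each $P_k$ as a limit shape via a translation-invariant ``highway'' environment, and superpose independent randomly-shifted copies of these environments at widely separated scales $L_k$, taking edgewise minima.

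That said, as written the proposal is a sketch with a real gap at exactly the point you flag. For the single-polytope step, your description ``tune the densities so the cheapest path uses the $u_{j^*}$-highway'' hides a nontrivial construction: you must ensure both that highways in direction $u_j$ are reachable from anywhere at sublinear cost \emph{and} that off-highway edges are expensive enough to force the claimed lower bound $g_P$; this requires care with rational approximation of the $u_j$ and with the connector costs. More seriously, in the multiscale superposition the lower bound $g_{\mathrm{new}} \geq g_C$ is not handled. Your proposed mechanism (``finer scales are too sparse to help'') is correct in spirit, but the actual argument in \cite{HM} requires a quantitative estimate: one shows that a path of length $n$ can save at most $o(n)$ by using highways at scales $L_k \ll n$, because the total length of such highways encountered is $o(n)$, and highways at scales $L_k \gg n$ are too far away to reach. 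Making this rigorous needs explicit control on highway densities at each scale and a Borel--Cantelli-type argument; ``block-and-subadditivity'' alone does not supply it. Without this, the inequality $g_{\mathrm{new}} \geq g_C$ is unproved and the argument is incomplete.
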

From this result, we see for example that there are stationary-ergodic models of FPP that have limit shapes equal to polygons or Euclidean balls. So certainly conjectured properties from the i.i.d. setting like strict convexity are not true in such generality. 

The previous section focused on high dimensions. What can we say about low dimensions? For one special class of distributions, Durrett and Liggett \cite{DurrettLiggett} were able to say much more: that the limit shape is not 
\index{strict convexity}%
strictly convex. Of course we believe strict convexity in the continuous weight case, so these distributions have atoms. Their ``flat edge'' result holds for higher dimensions as well, but we can give a precise description of it in two dimensions, and further work has been done by Marchand \cite{Marchand}, Zhang \cite{Zhangsuper, Zhang}, and Auffinger-Damron \cite{AD12}.

Recalling that $F$ is the distribution of our weights, let $\mathcal{M}_p$ be the set of distributions $F$ that satisfy the following:
\begin{enumerate}
\item $F(x)=0$ for all $x < 1$ and $F(1) = p \geq \vec p_c$, where $\vec p_c$ is the 
\index{oriented percolation}%
oriented bond percolation threshold (approximately .70548), and
\item $\int x ~\text{d}F(x) = \mathbb{E}t_e < \infty$.
\end{enumerate}

In \cite{DurrettLiggett}, it was shown that if $F \in \mathcal{M}_p$ then the limit shape $\mathcal B$ has some flat edges. The precise location of these edges was found in \cite{Marchand}. To describe this, write $\mathbf{B}_1$ for the closed $\ell^1$ unit ball:
\[
\mathbf{B}_1 = \{(x,y)\in \mathbb{R}^2~:~ |x|+|y|\leq 1\}
\]
and write $int ~\mathbf{B}_1$ for its interior. For $p > \vec p_c$ let $\alpha_p$ be the asymptotic speed of oriented percolation (see \cite{Durrett}), define the points
\begin{equation}\label{eq:NP}
M_p = \left(\frac{1}{2} - \frac{\alpha_p}{\sqrt 2}, \frac{1}{2} + \frac{\alpha_p}{\sqrt 2}\right) \text{ and } N_p = \left(\frac{1}{2} + \frac{\alpha_p}{\sqrt 2}, \frac{1}{2} - \frac{\alpha_p}{\sqrt 2}\right)
\end{equation}
and let $[M_p,N_p]$ be the line segment in $\mathbb{R}^2$ with endpoints $M_p$ and $N_p$. For symmetry reasons, the following theorem is stated only for the first quadrant.

\begin{theorem}[Durrett-Liggett \cite{DurrettLiggett},  Marchand \cite{Marchand}]\label{thm:marchand1}
Let $F \in \mathcal{M}_p$ in two dimensions.
\begin{enumerate}
\item $\mathcal B \subset \mathbf{B}_1$.
\item If $p < \vec p_c$ then $\mathcal B \subset int~\mathbf{B}_1$.
\item If $p > \vec p_c$ then $\mathcal B \cap [0,\infty)^2 \cap \partial \mathbf{B}_1 = [M_p,N_p]$.
\item If $p = \vec p_c$ then $\mathcal B \cap [0,\infty)^2 \cap \partial \mathbf{B}_1 = (1/2,1/2)$.
\end{enumerate}
\end{theorem}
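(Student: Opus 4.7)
The plan is to split Theorem \ref{thm:marchand1} into three tasks: the global inclusion $\mathcal B \subset \mathbf{B}_1$ of (1); the identification of the flat segment $[M_p, N_p]$ as a subset of $\partial \mathcal B$ (half of (3) and (4)); and the strict inequality $g(v) > \|v\|_1$ in all remaining directions (item (2) together with the complements inside (3) and (4)). Throughout I will use that $g$ is a norm on $\mathbb R^2$, so that $\mathcal B = \{x : g(x) \leq 1\}$, and that $\partial \mathbf{B}_1 = \{x : \|x\|_1 = 1\}$, so that $v \in \partial \mathcal B \cap \partial \mathbf{B}_1$ is equivalent to $g(v) = \|v\|_1$.

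The first task is immediate from $F \in \mathcal M_p$. Since $t_e \geq 1$ almost surely, any lattice path $\Gamma$ from $0$ to $x$ satisfies $T(\Gamma) \geq |\Gamma| \geq \|x\|_1$, where $|\Gamma|$ is the number of edges. Hence $T(0,x) \geq \|x\|_1$, and the shape theorem gives $g(x) \geq \|x\|_1$, i.e.\ $\mathcal B \subset \mathbf{B}_1$. This observation also has a structural consequence I will exploit repeatedly below: if $v$ lies in the first quadrant and $g(v) = \|v\|_1$, then near-geodesics from $0$ to $[nv]$ must have both passage time and graph length close to $n\|v\|_1$, which on $\mathbb Z^2$ forces them to be asymptotically monotone (``oriented'') paths using almost exclusively weight-$1$ edges.

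For the inclusion of the flat segment, I would couple the FPP weights to an oriented Bernoulli percolation on $\mathbb Z^2$ by declaring an oriented edge $\langle x, x+e_i\rangle$ open iff $t_e = 1$; under $F \in \mathcal M_p$ this is Bernoulli$(p)$ oriented percolation. For $p > \vec p_c$, the restriction of the oriented cluster to the hyperplane $H_n = \{z_1 + z_2 = n\}$ asymptotically fills the interval from $(n/2 - n\alpha_p/\sqrt 2,\, n/2 + n\alpha_p/\sqrt 2)$ to $(n/2 + n\alpha_p/\sqrt 2,\, n/2 - n\alpha_p/\sqrt 2)$. A standard restart argument (reach, at cost $O(1)$ in passage time, a nearby site whose forward open cluster is infinite) produces, for each $v \in [M_p, N_p]$, an oriented open path from $0$ to some $y_n \in H_n$ with $\|y_n - nv\|_1 = o(n)$. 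Such a path has passage time exactly $n$, and the extra $o(n)$ steps needed to reach $[nv]$ from $y_n$ can be paid using the bounded-mean weights, contributing $o(n)$ in expectation (and a.s.\ after the shape theorem). Hence $g(v) \leq 1 = \|v\|_1$, and with the lower bound this gives $[M_p, N_p] \subset \partial \mathcal B$. The critical case $p = \vec p_c$ collapses the segment to $(1/2, 1/2)$ and follows from the same construction along the main diagonal.

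The main obstacle is the strict inequality $g(v) > \|v\|_1$ off the flat segment; this is both item (2) in its entirety and the ``$\subset$'' half of (3), (4). The mechanism is common to both cases: by the structural observation in the first paragraph, $g(v) = \|v\|_1$ in a first-quadrant direction would force near-geodesic paths from $0$ to $[nv]$ to be monotone and to use weight-$1$ edges on all but a vanishing fraction of steps, which in turn would produce oriented open paths reaching $[nv]$. But this is ruled out in the relevant regime: for $p < \vec p_c$ the open cluster from $0$ is a.s.\ finite with exponentially decaying tails; for $p > \vec p_c$ with $v \notin [M_p, N_p]$ the oriented cluster has asymptotic speed exactly $\alpha_p$, so the probability of an oriented open path reaching $[nv]$ decays exponentially in $n$. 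The delicate step is promoting these ``cluster cannot reach'' statements to a quantitative additive gain $g(v) \geq \|v\|_1 + \delta(v)$ with $\delta(v) > 0$. I would carry this out by a block renormalization / coarse-graining argument as in \cite{DurrettLiggett}, showing that in coarse blocks any candidate near-geodesic must traverse a positive density of edges with weight strictly greater than $1$, each contributing an $F$-dependent excess, and summing these excesses gives the required positive defect in $g(v) - \|v\|_1$. The hardest quantitative input is the precise asymptotic speed $\alpha_p$ of oriented percolation with the associated large-deviation bounds (Durrett, refined by Marchand \cite{Marchand}), which is what allows the endpoints to be identified as exactly $M_p$ and $N_p$ rather than some unspecified pair.
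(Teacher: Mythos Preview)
The paper does not actually prove this theorem: it is stated with attribution to Durrett--Liggett and Marchand, and the paragraph that follows is explicitly a heuristic (``The existence of a flat edge for the limit shape can be explained heuristically\ldots''), covering only the easy direction of (3) --- that oriented open paths of weight-$1$ edges are geodesics, so the limit shape agrees with the $\ell^1$ ball inside the percolation cone. There is nothing in the paper addressing (2), (4), or the reverse inclusion in (3), beyond pointing to the references.

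Your outline is consistent with that heuristic and with the strategy of the cited papers. The proof of (1) is complete and correct. Your argument for $[M_p,N_p]\subset\partial\mathcal B$ via the oriented-percolation coupling and a restart is the standard one and matches the paper's heuristic. You are right that the substantive content lies in the strict inequality $g(v)>\|v\|_1$ off the cone, and that this requires the large-deviation inputs for oriented percolation together with a renormalization to force a positive density of non-minimal edges; this is Marchand's contribution and is genuinely hard. One place where your sketch is thinner than it looks is the critical case $p=\vec p_c$: you say the inclusion $(1/2,1/2)\in\partial\mathcal B$ ``follows from the same construction along the main diagonal,'' but at criticality there is no infinite oriented open cluster, so the restart argument as written does not produce oriented open paths of length $n$. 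One instead needs a continuity or monotonicity argument in $p$ (e.g.\ that $g$ is continuous in $p$ from above, or a direct estimate using that at $\vec p_c$ the probability of an oriented open path of length $n$ does not decay exponentially). This is a real extra step, not a triviality.
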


The angles corresponding to points in the line segment $[M_p,N_p]$ are said to be in the 
\index{percolation cone}%
\emph{percolation cone}. 
The existence of a flat edge for the limit shape can be explained heuristically. The definition of the oriented percolation threshold $\vec p_c$ is as follows, taking $\mathbb{P}_p$ to be the distribution of i.i.d. edge-weights $(\eta_e)$ on $\mathcal{E}^2$ with probability $p$ to be 1 and $1-p$ to be 2:
\[
\vec p_c = \sup\{p : \mathbb{P}_p(\exists ~\text{infinite oriented path }\Gamma \text{ with } \eta_e = 1 \text{ for } e \in \Gamma) = 0\}.
\]
(Here, oriented means as usual that the vertices of the path, in order, have non-decreasing coordinates.) By monotonicity of the probability in $p$, one has that for $p>\vec p_c$, if $F \in \mathcal{M}_p$, then there is positive probability of existence of an infinite oriented path of all $1$-weights starting from 0. In fact one can even show the stronger statement that for any angle $\theta$ in the percolation cone, there is positive probability for an infinite oriented path of $1$-weights starting from 0 and going in direction $\theta$ (the angles of the vertices on the path converge to $\theta$). Since no edge-weights have value below 1, any finite segment of such an infinite path must be a minimizing path (geodesic). But distance along these geodesics correspond to the $\ell^1$-distance on $\mathbb{Z}^2$, so in such directions, the limit shape must correspond to the $\ell^1$-ball. It is important to point out that it is unknown (but expected to be false) whether there are other distributions whose limit shapes have flat edges. Even in the case of $\mathcal{M}_p$, the flat part of the percolation cone ends at $M_p$ and $N_p$; however, that does not exclude the limit shape from having further flat spots. This is not expected though.


Let $\beta_p := 1/2 + \alpha_p/\sqrt{2}$, that is, define $\beta_p$ as the $x$ coordinate of $N_p$. Convexity and symmetry of the limit shape imply that $1/g(e_1) \geq \beta_p$. A non-trivial statement about the edge of the percolation cone came in 2002 when Marchand \cite[Theorem 1.4]{Marchand} proved that this inequality is in fact strict:
\[
1/g(e_1) > 1/2 + \alpha_p/\sqrt{2}.
\] 
In other words, Marchand's result says that the line that goes through $N_p$ and is orthogonal to the $e_1$-axis is not a tangent line of $\partial \mathcal B$. The following theorem builds on Marchand's result and technique and says that at the edge of the percolation cone, one cannot have a corner.

\begin{theorem}[Auffinger-Damron \cite{AD12}]\label{thm: diffll}
Let $F \in \mathcal{M}_p$ for $p\in[\vec p_c,1)$. The boundary $\partial \mathcal B$ is differentiable at $M_p$ and $N_p$.
\end{theorem}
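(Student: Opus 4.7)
The plan is to argue by contradiction, combining a one-dimensional reformulation of differentiability with a two-leg path construction exploiting Marchand's strict inequality $1/g(e_1)>\beta_p$. By the $\mathbb{Z}^2$ reflection $(x,y)\mapsto(y,x)$, differentiability at $M_p$ reduces to that at $N_p$, so I focus on $N_p$. Parametrize the first-quadrant portion of $\partial\mathcal{B}$ as the graph of a concave, decreasing function $y=h(x)$ on $[0,1/g(e_1)]$. Since $[M_p,N_p]\subset\{x+y=1\}$, one has $h'(\beta_p^-)=-1$, concavity forces $h'(\beta_p^+)\le -1$, and Marchand's strict inequality combined with $(1/g(e_1),0)\in\partial\mathcal{B}$ forces $h'(\beta_p^+)>-\infty$. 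Writing $-m^*:=h'(\beta_p^+)\in(-\infty,-1]$, differentiability at $N_p$ is equivalent to $m^*=1$.

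Suppose for contradiction that $m^*>1$. For small $\epsilon>0$, set $v_\epsilon:=N_p+\epsilon e_1$. The concave tangent inequality $h(x)\le 1-\beta_p-m^*(x-\beta_p)$ together with the $1$-homogeneity of $g$ yields the lower bound
\[
g(v_\epsilon) \;\ge\; 1+\frac{m^*}{1+(m^*-1)\beta_p}\,\epsilon,
\]
whose coefficient of $\epsilon$ is strictly larger than $1$ because $\beta_p<1$ and $m^*>1$. The plan is to derive the matching upper bound
\[
g(v_\epsilon) \;\le\; 1+\epsilon+o(\epsilon),
\]
yielding the contradiction. By the shape theorem, this upper bound reduces to exhibiting, for each small $\epsilon$ and each large $n$, a path $\Gamma_n$ from $0$ to $\lfloor n v_\epsilon\rfloor$ of expected passage time $(1+\epsilon)n+o(n)$.

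Build $\Gamma_n$ in two legs through a waypoint $\lfloor nw\rfloor$ to be chosen. The first leg, from $0$ to $\lfloor nw\rfloor$, is a Durrett-Liggett-type greedy oriented path through the supercritical oriented percolation cluster of all-$1$-weight edges; it is valid whenever the direction of $w$ from $0$ lies strictly inside the percolation cone, in which case its expected cost is $n\|w\|_1+o(n)$. The second leg, from $\lfloor nw\rfloor$ to $\lfloor nv_\epsilon\rfloor$, is the delicate part: its displacement $v_\epsilon-w$ lies outside the cone, so no direct cone-oriented path suffices. The decisive step is to choose $w$ so that this displacement sits only a vanishing distance $\eta=\eta(\epsilon)\to 0$ outside the cone boundary, and then to realize the leg as a block-renormalized oriented walk whose blocks stay inside the cone except for short deviations whose density tends to zero with $\eta$. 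Marchand's quantitative strict inequality $1/g(e_1)-\beta_p>0$ provides exactly the control needed to drive this deviation density down sufficiently fast to deliver second-leg cost $\|v_\epsilon-w\|_1+o(\epsilon)$.

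The main obstacle is establishing this cone-exit estimate: that for target directions a vanishing distance $\eta$ outside the cone boundary, the first-order excess of $g$ over $\|\cdot\|_1$ in such directions is $o(\eta)$ rather than $\Theta(\eta)$. This is a genuine sharpening of Marchand's technique and requires a careful block argument showing that, at the cone edge, the density of usable oriented all-$1$ path segments remains high enough in arbitrarily small neighborhoods to absorb outward deviations almost for free. Once this estimate is in hand, placing $w$ close enough to $N_p$ along a direction strictly inside the cone ensures the geometric identity $\|w\|_1+\|v_\epsilon-w\|_1=1+\epsilon+o(\epsilon)$; combining with the two leg-cost estimates gives the upper bound on $g(v_\epsilon)$, producing the required contradiction and forcing $m^*=1$.
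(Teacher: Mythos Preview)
The paper itself does not prove this theorem; it only states the result and cites \cite{AD12}. So there is no in-paper proof to compare against, and I will assess your proposal on its own.

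Your reduction to the one-sided derivative $m^*:=-h'(\beta_p^+)$ and the lower bound
\[
g(v_\epsilon)\ge 1+\frac{m^*}{1+(m^*-1)\beta_p}\,\epsilon
\]
are correct, and the overall contradiction strategy matches the approach of \cite{AD12}. The problem is that what you call the ``cone-exit estimate'' is not a technical lemma on the way to the theorem --- it \emph{is} the theorem. For $u_\eta=(\beta_p+\eta,\,1-\beta_p-\eta)$ one computes directly from the right-tangent line that
\[
g(u_\eta)-1 \;=\; \frac{(m^*-1)\,\eta}{1+(m^*-1)\beta_p}+o(\eta),
\]
so $g(u_\eta)-\|u_\eta\|_1=o(\eta)$ holds if and only if $m^*=1$. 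Your two-leg construction through a waypoint $w$ does not sidestep this: by subadditivity the second leg contributes $g(v_\epsilon-w)$, and since $v_\epsilon-w$ is itself a direction just outside the cone you are back to the same estimate. In other words, the waypoint decomposition is a detour, and the entire content of the theorem has been relocated into the sentence ``a careful block argument showing that, at the cone edge, the density of usable oriented all-$1$ path segments remains high enough\ldots,'' which you describe but do not carry out.

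That said, your instinct is correct: the actual proof in \cite{AD12} does proceed by building on Marchand's oriented-percolation/block methods to control $g$ in directions approaching the cone boundary, and this is where all of the work lies. What is missing from your proposal is precisely that work --- an explicit construction (renormalized blocks, comparison with supercritical oriented percolation, and a quantitative large-deviation estimate for the edge speed $\alpha_p$) giving the $o(\eta)$ bound. Without it, the proposal is an accurate outline of the strategy but not a proof.
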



The theorem above shows that any distribution in $\mathcal M_p$ has a 
\index{non-polygonal limit shape}%
non-polygonal limit shape. 
The first example of a non-polygonal limit shape was discovered by Damron-Hochman \cite{DH}.




\subsection{LPP}

\index{last-passage percolation (LPP)}%
\index{LPP}%
LPP is defined similarly to FPP, but taking the maximum over oriented paths, instead of the infimum over all paths. On $\mathbb{Z}^d$, we assign i.i.d. site-weights $(t_v)_{v \in \mathbb{Z}^d}$ with common distribution $F$. These weights no longer need to be nonnegative, but for simplicity we will take them to be so. An oriented path $\Gamma$ with vertices $x_0, x_1, \ldots, x_n$ has the property that all coordinates of $x_i$ are less or equal to the corresponding coordinates of $x_{i+1}$ (this is written $x_i \leq x_{i+1}$); that is, for all standard basis vectors $e_k$, and all $i$, one has $x_i \cdot e_k \leq x_{i+1} \cdot e_k$. By convention, we identify the path $\Gamma$ with its vertices, but we exclude the initial point. The passage time of such a path is $T(\Gamma) = \sum_{v \in \Gamma} t_v$, and the last-passage time $T(x,y)$ between vertices $x \leq y$ is
\[
T(x,y) = \max_{\Gamma : x \to y} T(\Gamma),
\]
where the maximum is over oriented paths from $x$ to $y$. Note that there are only finitely many paths under consideration, so we can take a maximum. It is important to note that when $x,y$ do not satisfy $x \leq y$ or $y \leq x$, then $T(x,y)$ is not defined. Also, by convention, $T(x,x) = 0$ for all $x$.

Due to directedness of the model and the fact that we are taking a maximum, $T$ has somewhat different properties than those in FPP. One still has for $x \leq y$, $T(x,y) \geq 0$ and if $t_v > 0$ for all $v$, then $T(x,y) > 0$ when $x \neq y$. Due to excluding the initial point from all our paths, we have a super-additivity property of $T$ that corresponds to the triangle inequality in FPP:
\[
\text{for } x \leq y \leq z,~ T(x,z) \geq T(x,y) + T(y,z).
\]
Due to this super-additivity, the limiting shape in LPP is not convex, since the corresponding ``shape function'' $g$ will be super-additive. For its definition, we again appeal to the subadditive ergodic theorem, noting that $-T$ is subadditive. The only difficulty is to come up with conditions under which the limit is finite. The following version of ``radial convergence'' for $T$ to a limit comes from Martin \cite{Martin}.
\begin{theorem}
Suppose that 
\begin{equation}\label{eq: shape_condition}
\int_0^\infty (1-F(x))^{1/d}~\text{d}x < \infty.
\end{equation}
Then for each $x \geq 0$ in $\mathbb{R}^d$, the following (deterministic) limit exists a.s. and in $L^1$:
\[
g(x) = \lim_n \frac{T(0,[nx])}{n} < \infty,
\]
where $[nx]$ is the point of $\mathbb{Z}^d$ with $nx \in [nx]+[0,1)^d$. The function $g$ is continuous on $\{x : x \geq 0\}$ (including at the boundaries), satisfies $g(x+y) \geq g(x) + g(y)$, is invariant under permutations of the coordinates, and $g(ax) = ag(x)$ for $a \geq 0$.
\end{theorem}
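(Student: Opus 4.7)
The plan is to mimic the FPP shape theorem, suitably adapted for the superadditive max-plus structure. Fix $x \in \Z^d$ with $x \geq 0$ and set $X_{m,n} = -T(mx, nx)$ for $0 \leq m < n$. Concatenation of oriented paths gives $T(0, nx) \geq T(0, mx) + T(mx, nx)$, so $(X_{m,n})$ is subadditive; the i.i.d.\ structure of $(t_v)$ under integer translations gives the stationarity and ergodicity conditions. Liggett's subadditive ergodic theorem then yields $g(x) := \lim_n T(0, nx)/n$ a.s.\ and in $L^1$, provided one can verify the integrability hypothesis $\E T(0, nx) \leq cn$ for some $c>0$ and all $n$.

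This last estimate is the main obstacle. Unlike in FPP, where a single deterministic path already bounds $\E T(0, nx)$ from above, in LPP one must control the expected maximum over the multinomial-many $\binom{n\|x\|_1}{nx_1, \ldots, nx_d}$ oriented paths from $0$ to $nx$, a count that grows exponentially in $n$. The hypothesis $\int_0^\infty (1-F(y))^{1/d}\, dy < \infty$ is precisely calibrated so that this expected maximum is linear in $n$. I would prove it via an order-statistics argument: the $k$-th largest among $N$ i.i.d.\ samples from $F$ has expectation of order $\int_0^\infty (1-F(y))^{k/N}\, dy$, and by pooling the weights along a maximizing oriented path (of $n\|x\|_1$ vertices) and bounding the contribution of the largest handful of site weights it can pick up, one converts the moment condition into the needed $O(n)$ bound. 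This single estimate is what distinguishes the LPP shape theorem from its FPP analogue.

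Granted radial convergence at integer $x$, I would extend to rational $x \in \Q^d$ with $x \geq 0$ by choosing $m \in \N$ with $mx \in \Z^d$ and setting $g(x) := g(mx)/m$; consistency follows from the fact that the limit along the subsequence $kmn$ equals $g(x)$ whichever $m$ one chose. Superadditivity $g(x+y) \geq g(x)+g(y)$ and positive homogeneity $g(qx) = qg(x)$ for $q \in \Q_{\geq 0}$ are immediate from the corresponding properties of $T$, and permutation invariance follows from the distributional invariance of $(t_v)$ under coordinate permutations. Crucially, superadditivity plus positive homogeneity gives concavity: for $\lambda \in [0,1]$,
\[
g(\lambda x + (1-\lambda)y) \geq g(\lambda x) + g((1-\lambda)y) = \lambda g(x) + (1-\lambda)g(y).
\]

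A finite concave positively homogeneous function on $\Q^d \cap [0,\infty)^d$ extends uniquely to a continuous concave function on $[0,\infty)^d$: continuity on the interior is automatic from concavity, while continuity up to the boundary (including the coordinate axes) follows from the Lipschitz-type bound $0 \leq g(x) \leq C\|x\|_1$ obtained in Step~2 by comparing $T(0,x)$ with the passage time along a single path of $\|x\|_1$ vertices. All the listed properties of $g$ then transfer to $\R^d_{\geq 0}$ by approximation. The only genuinely hard input is the moment estimate; the rest of the argument is essentially a careful bookkeeping exercise in applying the subadditive ergodic theorem and exploiting the algebraic properties of superadditive positively homogeneous functions.
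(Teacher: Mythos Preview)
Your overall framework matches the paper's treatment: the paper does not actually prove this theorem but cites Martin, noting (as you do) that one applies the subadditive ergodic theorem to $-T$ and that the crux is the linear bound $\E T(0,nx) \leq cn$, which is precisely where the hypothesis $\int_0^\infty (1-F)^{1/d}\,dy < \infty$ enters. Your identification of this estimate as the one genuinely hard input agrees with the paper's discussion, though your order-statistics sketch is too vague to constitute a proof (Martin's argument proceeds via greedy-lattice-animal bounds, and your displayed formula for the $k$-th order statistic is not quite right).

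There are, however, two concrete gaps. First, you claim the upper bound $g(x) \leq C\|x\|_1$ comes from ``comparing $T(0,x)$ with the passage time along a single path.'' That is the FPP direction; in LPP, $T$ is a \emph{maximum} over oriented paths, so a single path yields only a \emph{lower} bound $T(0,x)\geq T(\gamma)$. The upper bound is exactly the hard linear-mean estimate, not a triviality. Second, and more seriously, your assertion that ``continuity up to the boundary follows from the Lipschitz-type bound $0 \leq g(x) \leq C\|x\|_1$'' is false. This is a growth bound, not a Lipschitz bound, and concavity plus positive homogeneity plus such a bound do \emph{not} force boundary continuity: take $g(x_1,x_2) = x_1 \mathbf{1}_{\{x_2>0\}}$ on $[0,\infty)^2$, which is concave, superadditive, positively homogeneous, and satisfies $0 \leq g \leq \|x\|_1$, yet is discontinuous along the $x_1$-axis. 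The FPP Lipschitz estimate $|g(z+h)-g(z)| \leq g(h)$ is unavailable here because superadditivity gives only the one-sided bound $g(z+h)-g(z) \geq g(h)$, and $-h$ need not lie in the positive orthant. Continuity of $g$ at the boundary of the orthant is a genuinely nontrivial, model-specific part of Martin's result requiring a separate probabilistic argument; it is not ``bookkeeping.''
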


Although $\mathbb{E}t_v < \infty$ is sufficient to guarantee that $\mathbb{E}T(0,x) < \infty$ for all $x \geq 0$ (just bound $T(0,x)$ above by the sum of all weights $t_y$ with $0 \leq y \leq x$), the subadditive ergodic theorem this time gives
\[
g(x) = \sup_{n \geq 1} \frac{\mathbb{E}T(0,nx)}{n},
\]
and to deduce that $g(x)<\infty$, we need to know that the means $\mathbb{E}T(0,nx)$ do not grow faster than linearly. The corresponding issue in FPP is that the means $\mathbb{E}T(0,nx)$ do not grow sublinearly, which is guaranteed by $F(0) < p_c$. Here the corresponding condition is not on $F(0)$ but, as we are in LPP, it is on the tail of $F$, and is the integrability condition \eqref{eq: shape_condition}.

As in FPP, one can upgrade radial convergence to a sort of 
\index{shape theorem}%
shape theorem \cite{Martin}. As before, put
\[
\mathcal{B} = \{x \geq 0 : g(x) \leq 1\}.
\]
\begin{theorem}[LPP shape theorem]
Assume \eqref{eq: shape_condition} and put $B(t) = \{x \geq 0 : T(0,x) \leq t\}$. For any $\epsilon>0$, one has
\[
\mathbb{P}\left( (1-\epsilon)\mathcal{B} \subset B(t)/t \subset (1+\epsilon)\mathcal{B} \text{ for all large }t \right) = 1.
\]
\end{theorem}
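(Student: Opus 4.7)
The plan is to follow the same template as the FPP shape theorem, upgrading the radial convergence supplied by the previous theorem (which applies because \eqref{eq: shape_condition} is assumed) to a uniform statement via compactness and a sandwich argument. First I would reformulate the containment $(1-\epsilon)\mathcal{B}\subset B(t)/t\subset (1+\epsilon)\mathcal{B}$ as the single asymptotic equivalence
\[
\lim_{\substack{x\in\mathbb{Z}^d_{\geq 0}\\ \|x\|_1\to\infty}} \frac{T(0,x)-g(x)}{\|x\|_1}=0\quad\text{a.s.},
\]
which is legitimate because $g$ is continuous, $1$-homogeneous on $\{x\geq 0\}$, and (by radial convergence along each axis, where $g(e_k)=\mathbb{E}t_v<\infty$ by \eqref{eq: shape_condition}) bounded above and below by constant multiples of $\|x\|_1$; hence $\mathcal{B}\cap\{x\geq 0\}$ is compact with nonempty relative interior, and the two containments are equivalent to the asymptotic above.

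The two ingredients I would use to extract uniform convergence from the radial one are $(i)$ compactness of the ``unit sphere'' $S=\{x\geq 0:\|x\|_1=1\}$, which lets me cover $S$ by finitely many $\ell^1$-balls of radius $\delta$ about rational centers $z_1,\dots,z_N\in S$ chosen with all coordinates strictly positive (any boundary direction with $z_j=0$ is approximated to accuracy $\delta$ by such an interior rational, and by uniform continuity of $g$ on $S$ this costs at most $o(1)$ in $g$), together with a full-measure event $\Omega'$ on which $T(0,[n z_i])/n\to g(z_i)$ holds simultaneously for every $i$; and $(ii)$ the crucial \emph{monotonicity} of the last-passage time, namely $T(0,y)\leq T(0,w)$ whenever $0\leq y\leq w$ coordinatewise (any oriented path to $y$ extends to an oriented path to $w$, and all weights are nonnegative). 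With these in hand I would argue by contradiction as in the FPP proof: given a sequence $x_n\to\infty$ with $|T(0,x_n)-g(x_n)|\geq\epsilon\|x_n\|_1$ and $x_n/\|x_n\|_1\to z$, pick the center $z_i$ (rescaled to have integer coordinates) closest to $z$, set
\[
\alpha_n^-=\min_{1\leq j\leq d}\Bigl\lfloor\frac{x_{n,j}}{z_{i,j}}\Bigr\rfloor,\qquad \alpha_n^+=\max_{1\leq j\leq d}\Bigl\lceil\frac{x_{n,j}}{z_{i,j}}\Bigr\rceil,
\]
so that $\alpha_n^- z_i\leq x_n\leq \alpha_n^+ z_i$ in $\mathbb{Z}^d$, and use monotonicity to sandwich
\[
T(0,\alpha_n^- z_i)\;\leq\;T(0,x_n)\;\leq\;T(0,\alpha_n^+ z_i).
\]
Because $z_i$ has strictly positive entries uniformly bounded below by some $\eta>0$ (depending on how we took the cover), the ratio $\alpha_n^+/\alpha_n^-$ is $1+O(\delta/\eta)$ and $\alpha_n^{\pm}\|z_i\|_1/\|x_n\|_1\to 1$. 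Radial convergence along $z_i$ then converts the sandwich into $T(0,x_n)/\|x_n\|_1=g(z_i)/\|z_i\|_1+O(\delta)+o(1)$, and continuity of $g$ gives $g(x_n)/\|x_n\|_1=g(z)+o(1)=g(z_i)/\|z_i\|_1+O(\delta)$, producing the contradiction once $\delta$ is chosen $\ll\epsilon$.

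The hard part will be handling directions $z$ that lie on the boundary of the orthant (some $z_j=0$), because then no interior rational $z_i$ can be close to $z$ in an $\ell^1$ sense that keeps $\alpha_n^+/\alpha_n^-$ bounded — the ratios $x_{n,j}/z_{i,j}$ need not be comparable across coordinates. My strategy is to treat such directions by a two-stage perturbation: replace $z$ by $z_\eta=(1-\eta)z+(\eta/d)\mathbf{1}$, which is interior, apply the sandwich argument for $z_\eta$ using an interior rational center, and then let $\eta\to 0$, absorbing the error using continuity of $g$ and the fact that monotonicity gives $T(0,x_n)\leq T(0,x_n+\lfloor\eta\|x_n\|_1\rfloor\mathbf{1})$ for the upper bound and $T(0,x_n)\geq T(0, x_n')$ for any $x_n'\leq x_n$ for the lower bound, where $x_n'$ is the projection of $x_n$ onto the nonzero coordinates of $z$. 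The contribution from the added $\mathbf{1}$-direction padding is controlled by the integrability assumption \eqref{eq: shape_condition}, which is exactly what makes $\mathbb{E}T(0,\eta\|x_n\|_1\mathbf{1})$ linear rather than superlinear. Unlike in FPP, there is no metric providing a direct Lipschitz bound on $|T(0,x)-T(0,y)|$ via $T(x,y)$; all comparisons must go through coordinatewise monotonicity, and this is the structural reason boundary directions require extra care.
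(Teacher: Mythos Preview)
The paper does not actually supply a proof of the LPP shape theorem: it states the result, cites Martin, and remarks ``As in FPP, one can upgrade radial convergence to a sort of shape theorem.'' So your sketch is not competing with an in-text argument; it is filling in what the paper only gestures at. Your overall template --- radial convergence plus compactness plus a local comparison --- is exactly what ``as in FPP'' means here, and the key structural observation you make, that coordinatewise monotonicity $T(0,y)\le T(0,w)$ for $y\le w$ replaces the Lipschitz bound $|T(0,x)-T(0,y)|\le b\|x-y\|_1$ used under bounded weights in the FPP proof, is the right substitute. The sandwich $T(0,\alpha_n^- z_i)\le T(0,x_n)\le T(0,\alpha_n^+ z_i)$ with $\alpha_n^\pm$ defined via coordinate ratios is clean and correct for directions bounded away from the boundary of the orthant.

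There is one genuine gap in your boundary treatment, specifically in the lower bound. Your padding argument for the upper bound is fine: $x_n+\lfloor\eta\|x_n\|_1\rfloor\mathbf{1}$ lies in a direction with all coordinates at least $\eta/(1+\eta d)$, the interior sandwich applies, and continuity of $g$ absorbs the $O(\eta)$ error. But for the lower bound you write $T(0,x_n)\ge T(0,x_n')$ with $x_n'$ the projection onto the support of $z$, and then stop. The point $x_n'$ is not a scalar multiple of a fixed lattice vector, so radial convergence at $z$ alone does not give $T(0,x_n')/\|x_n'\|_1\to g(z)$; you need the \emph{uniform} statement on the lower-dimensional face containing $z$. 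In that face, $z$ is an interior direction and your sandwich argument applies --- but this is exactly the theorem in dimension $k<d$. So the argument should be organized as an induction on $d$ (base case $d=1$ is the strong law), and the projection step invokes the inductive hypothesis. Once you make that explicit, the sketch closes. Your remark about \eqref{eq: shape_condition} controlling $\mathbb{E}T(0,\eta\|x_n\|_1\mathbf{1})$ is actually not needed for the upper bound as you have set it up (continuity of $g$ already handles the error); the role of \eqref{eq: shape_condition} is entirely in ensuring $g<\infty$ and continuous, which you are already using.
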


\subsubsection{Near the boundary}
\index{non-polygonal limit shape}%
Just as in FPP, not much is known about the limiting shape $\mathcal{B}$. It is expected as before to have differentiable boundary (at least when the weights are continuous) with positive curvature, and certainly not to be a polygon. The flat edge result from FPP carries over to LPP: the analogous condition on our distribution $F$ is
\[
F(1) =1,~ F(1^-) = 1-p, \text{ where } p > \vec p_c.
\]
Such distributions $F$ of weights $t_v$ have $\mathbb{P}(t_v>1)=0$ but $\mathbb{P}(t_v = 1)=p>\vec p_c$. The value $c$ is the highest weight of an edge, and any oriented path of all $c$-weights will be an optimal infection path. Thus again in the percolation cone, the boundary of the limit shape will agree with that of the $\ell^1$-ball.

Somewhat surprisingly, in two dimensions, the limit shape is shown not to be a polygon for most distributions. This is in contrast to the situation in FPP, where this is only known for distributions in $\mathcal{M}_p$. The LPP result is a corollary of a ``universality'' of the shape function $g$ near the boundary. The following result, proved by Martin \cite{Martinboundary}, shows that the asymptotics of $g$ near the boundary of the quarter plane $\{x : x \geq 0\}$ are explicit, and only depend on the mean and variance of $F$. From these asymptotics we can extract non-polygonality of the limit shape.
\begin{theorem}
Consider $d=2$. Write $\mu$ for the mean of $F$ and $\sigma^2$ for the variance of $F$. If $F$ satisfies \eqref{eq: shape_condition}, then
\[
g(1,a) = \mu + 2\sigma \sqrt{a} + o(\sqrt{a}) \text{ as } a \downarrow 0.
\]
\end{theorem}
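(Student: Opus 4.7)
After replacing $t_v$ by $(t_v-\mu)/\sigma$ (which shifts $T$ by $(n+m)\mu$ and rescales by $\sigma$), the claim reduces to showing $g(1,a)=2\sqrt{a}+o(\sqrt{a})$ as $a\downarrow 0$ for centered, unit-variance weights. The plan is to approximate the discrete LPP by \emph{Brownian LPP} (BLPP),
\[
L_m^{\mathrm{Br}}:=\max_{0=\tau_{-1}\le\tau_0\le\cdots\le\tau_m=1}\;\sum_{j=0}^m\bigl[B_j(\tau_j)-B_j(\tau_{j-1})\bigr],
\]
with $(B_j)$ i.i.d.\ standard Brownian motions on $[0,1]$, and then to invoke $\mathbb{E} L_m^{\mathrm{Br}}/\sqrt{m}\to 2$ as $m\to\infty$. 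The latter follows from Baryshnikov's identification of $L_m^{\mathrm{Br}}$ in distribution with the top eigenvalue of an $(m+1)\times(m+1)$ GUE matrix, combined with the Wigner-type asymptotic $\lambda_{\max}/\sqrt{N}\to 2$.

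Any oriented path from $(0,0)$ to $(n,m)$ is parameterized by the columns $0\le s_0\le\cdots\le s_{m-1}\le s_m=n$ at which its upward steps occur, giving
\[
T(0,(n,m))=\max_{s}\sum_{j=0}^m\bigl[S_j(s_j)-S_j(s_{j-1}-1)\bigr],\quad s_{-1}:=0,
\]
where $S_j(k):=\sum_{i=1}^k t_{(i,j)}$ and the $m+1$ row partial-sum walks $(S_j)_{j=0}^m$ are mutually independent. Donsker's theorem applied jointly across rows, combined with continuous mapping for the BLPP supremum functional (continuous in the uniform topology), yields $n^{-1/2}T(0,(n,m))\Rightarrow L_m^{\mathrm{Br}}$ as $n\to\infty$ with $m$ fixed; uniform integrability, obtained by a truncation argument using the finite second moment, upgrades this to convergence of means.

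For the lower bound on $g(1,a)$, fix a large integer $M$ and chain LPP along the diagonal. With $k=\lfloor m/M\rfloor$ and waypoints $P_i=(i\lfloor n/k\rfloor,iM)$, super-additivity yields $T(0,(n,m))\ge\sum_{i=1}^k T(P_{i-1},P_i)$, a sum of $k$ i.i.d.\ copies of $T(0,(\lfloor n/k\rfloor,M))$ supported on disjoint sub-rectangles. With $m=\lfloor na\rfloor$, one has $k/n\to a/M$ and $n/k\to M/a$, so sending $n\to\infty$ at fixed $a$ gives $g(1,a)\ge(a/M)\,\mathbb{E} T(0,(\lceil M/a\rceil,M))$; applying the $M$-fixed Donsker scaling as $a\downarrow 0$, $\mathbb{E} T(0,(\lceil M/a\rceil,M))\sim\sqrt{M/a}\,\mathbb{E} L_M^{\mathrm{Br}}$, whence
\[
\liminf_{a\downarrow 0}\frac{g(1,a)}{\sqrt{a}}\;\ge\;\frac{\mathbb{E} L_M^{\mathrm{Br}}}{\sqrt{M}}\;\xrightarrow{M\to\infty}\;2.
\]

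The matching upper bound is the main obstacle. A crude union bound over the $\binom{n+m}{m}$ paths combined with a Chernoff estimate delivers only $g(1,a)\le\sqrt{2a\log(1/a)}$, which overshoots the sharp constant $2$ by a $\sqrt{\log(1/a)}$ factor. To recover the correct constant, I would coarse-grain the path space: partition the $n$ columns into $N$ blocks and classify each path by the \emph{sequence of blocks} in which its $m$ upward steps occur, reducing the number of equivalence classes from $\binom{n+m}{m}$ to something depending only on $(N,m)$, while controlling residual fluctuations within a block class via CLT-based concentration of block sums. Tuning $N=N(M,a)$ so that the block sums satisfy a Donsker limit and the path-class entropy contributes only $o(\sqrt{a})$ after normalization should give $\limsup_{a\downarrow 0}g(1,a)/\sqrt{a}\le\mathbb{E} L_M^{\mathrm{Br}}/\sqrt{M}$ for each fixed $M$; sending $M\to\infty$ then closes the gap. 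The hardest step is precisely this coarse-graining: unlike in the lower bound, the $n\to\infty$ and $m\to\infty$ limits in $g(1,a)$ are not decoupled by an obvious sub-additivity, and the tuning of $N$ must trade off path entropy against concentration so as to preserve the universal constant $2$.
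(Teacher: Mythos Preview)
The paper under review does not itself prove this theorem; it merely states the result and cites Martin (2004). So there is no ``paper's own proof'' to compare against directly, only Martin's original argument.

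Your overall strategy---reduce to centered unit-variance weights, compare to Brownian LPP, and identify the constant $2$ via $\mathbb{E}L_m^{\mathrm{Br}}/\sqrt{m}\to 2$---is exactly the architecture of Martin's proof. Your lower bound is essentially correct and matches his: chaining via super-additivity into i.i.d.\ blocks of fixed height $M$, applying the law of large numbers across blocks, then Donsker within a block as its width tends to infinity, and finally sending $M\to\infty$. (One nuisance you glossed over: uniform integrability of $n^{-1/2}T(0,(n,M))$ requires the moment hypothesis $\int(1-F)^{1/2}<\infty$, not just finite variance; Martin handles this by truncation.)

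The upper bound, however, is not a proof as written---you say so yourself. The coarse-graining plan you sketch (partition columns into $N$ blocks, control path-class entropy against block-sum concentration, tune $N$) is plausible in spirit but the crucial step, that the entropy--concentration tradeoff can be tuned to recover the sharp constant $2$ rather than $2+\varepsilon$ or a spurious logarithm, is asserted rather than established. Martin's actual upper bound does not proceed via a union bound over path classes at all. Instead he first truncates the weights to be bounded, shows the truncation error is $o(\sqrt{a})$ using the moment condition, and then for bounded weights couples the row partial sums to Brownian motions via a strong invariance principle (KMT-type embedding), so that $T(0,(n,m))$ is uniformly close to the Brownian LPP value $\sqrt{n}\,L_m^{\mathrm{Br}}$. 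This sidesteps the entropy problem entirely: once the discrete walks are uniformly close to Brownian paths, the maximum over all up-right paths is automatically controlled by the Brownian maximum, with no combinatorial counting. Your proposed route may be salvageable, but the strong-approximation route is both what Martin does and considerably cleaner.
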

Here, $g(1,a)$ is the function $g$ evaluated at the point $(1,a)$, and as $a \downarrow 0$, this point approaches the boundary of the quarter plane. Note that $g(1,0) = \mu$, since the passage time from $0$ to $ne_1$ must be achieved along the one oriented path connecting these points, and its passage time is the sum of $n+1$ i.i.d. random variables with distribution $F$. Thus the law of large numbers gives the value of $g$ at $(1,0)$. Therefore the above result says:
\[
g(e_1 + ae_2) - g(e_1) = 2\sigma \sqrt{a} + o(\sqrt{a}) \text{ as } a \downarrow 0.
\]

To see that the above result implies that the limit shape is not a polygon, suppose for a contradiction that the limit shape is a polygon. Then it must have finitely many extreme points, and the boundary of the shape between the extreme points consists of straight line segments. If the limit shape is a triangle, then by symmetry, $g$ must be a multiple of the $\ell^1$-norm, but then $g(e_1 + ae_2) - g(e_1) = a$ for $a > 0$ and the above asymptotics are violated. Otherwise, there is a closest extreme point $w$ to the point $(1,a)$, and the limit shape boundary must be a line segment between $w$ and $(1,a)$. In this case, $g(e_1 + ae_2) - g(e_1) = ca$ for some real $c$ and $a$ small enough. But this again violates the above asymptotics.

\subsubsection{Exactly solvable cases in two dimensions}

The most famous case of LPP is when the distribution $F$ of the site-weights is exponential in two dimensions. Here, there is a direct mapping from the growth of $B(t)$ to a particle system called the Totally Asymmetric Simple Exclusion Process 
\index{TASEP}%
(TASEP). TASEP is defined loosely as follows. We imagine that at each site $z$ of $\mathbb{Z}$ with $z \leq 0$, there sits a particle at time 0. Associated to each particle is a Poisson process, and when the process increments, the particle attempts to move to the site directly to the right. If there is already a particle there, the move is suppressed, and the particle stays in its current location. The particle that is initially at site 0 is allowed to move unrestricted (since there are never any particles to the right of it), but the other particles may sometimes be blocked by particles to their rights. Our convention is that the particle at 0 immediately moves to the right at time zero. That is, at time zero, there is a particle at site 1, and particles at sites $-k$ for $k \geq 1$.

What is the relation between TASEP and LPP with exponential weights? To begin, the procession of the first particle in TASEP is the same as the infection in LPP along the positive $e_1$-axis from 0. Indeed, the infection appears at site 0 at time 0, just as the first particle in TASEP moves to the right. It then infects $e_1$ after an independent exponential time, just as the same particle in TASEP moves again to the right. Generally, the infection time from 0 to $ne_1$ is achieved through the path that proceeds directly down the positive $e_1$-axis, and occurs when the first particle in TASEP reaches site $n+1$. 

\begin{figure}[!ht]
\centering
\includegraphics[width=4.5in,trim={4cm 19cm 8cm 7cm},clip]{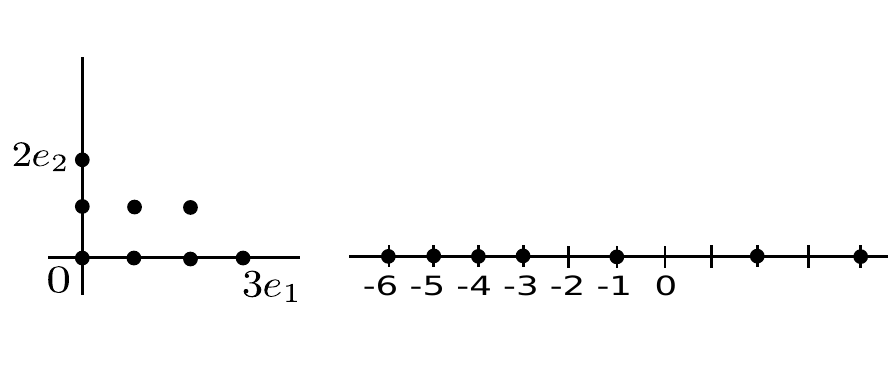}
\caption{Illustration of the correspondence between LPP and TASEP. In the left (LPP), the infection has moved 4 steps to the right on the $e_1$-axis, and so on the right (TASEP) first particle has moved four steps to the right, from 0 to 4. Similarly, the infection has taken three steps at the second level, and the second particle has moved three steps to the right from $-1$ to $2$. Last, the third TASEP particle has taken one step from $-2$ to $-1$.}
\label{fig: fig_2}
\end{figure}

At the second level, the infection of site $e_2$ occurs an independent exponential time after the infection appears at 0. This corresponds to the second particle in TASEP moving into the space left open after the first particle moves. Generally, the $n$-th step of the $k$-th particle in TASEP corresponds to the site $ke_2 + (n-1)e_1$ being infected from $0$. To see this, we can derive the following relation in LPP: for $x_1,x_2>0$, one has
\[
T(0,(x_1,x_2)) = t_{(x_1,x_2)} + \max\left\{ T(0,(x_1-1,x_2)), T(0,x_1,x_2-1)\right\}.
\]
This is because the infection from 0 reaches $(x_1,x_2)$ through either $(x_1-1,x_2)$ or $(x_1,x_2-1)$ (whichever is infected last), and after the one of these sites with maximal passage time from 0 is infected, $(x_1,x_2)$ must wait $t_{(x_1,x_2)}$ additional time. Similarly, in TASEP, for the $k$-th particle to make its $n$-th step, it must wait an independent exponential time after both of the following events occur: (a) the $k-1$-st particle makes its $n$-th step and (b) the $k$-th particle makes its $n-1$-st step. See Figure~\ref{fig: fig_2} for an example.

Because of this coupling, we can represent the passage times in LPP in terms of ``currents'' in TASEP. For example, 
\[
T(0,(n,n)) = \text{ time for }n\text{-th particle to make }n+1\text{-st step}.
\]
But this is exactly the time needed for $n$ particles to pass through the origin. Thus if we define $c_t$ as the ``current through zero at time $n$''; that is, the number of particles having passed through 0 by time $t$, one has
\[
T(0,(n,n)) \leq t \Leftrightarrow c_t \geq n.
\]
In a pioneering work, Rost \cite{Rost} showed in '81 exact asymptotics for variables like this current, and this translates directly to an exact formula for the limit shape in exponential LPP.
\begin{theorem}
Let $(t_v)$ be exponentially distributed with mean 1. Then 
\[
g(x) = g(x_1,x_2) = (\sqrt{x_1} + \sqrt{x_2})^2.
\]
\end{theorem}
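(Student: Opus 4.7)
The plan is to use the \emph{stationary exponential LPP} construction, which exploits the memoryless property of the exponential to produce a one-parameter family of explicit upper bounds for $g$ that are sharp in a single, characteristic direction. By the $1$-homogeneity $g(ax) = ag(x)$ and the continuity of $g$ on $\{x \geq 0\}$, it suffices to verify $g(n,m) = (\sqrt n + \sqrt m)^2$ for positive integers $n, m$; the boundary value $g(n, 0) = n$ (a sum of $n$ mean-one exponentials) already matches $(\sqrt n)^2$, and the shape-theorem hypothesis $\int_0^\infty (1 - F(x))^{1/2}\, \mathrm{d}x < \infty$ is obvious for $F(x) = 1 - e^{-x}$.

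For each $\alpha \in (0, 1)$, I would augment the lattice by replacing the weight at $(i, 0)$ for $i \geq 1$ with an $\mathrm{Exp}(1-\alpha)$ variable, the weight at $(0, j)$ for $j \geq 1$ with an $\mathrm{Exp}(\alpha)$ variable, and keep $\mathrm{Exp}(1)$ weights at all other sites (with $t_{(0,0)} = 0$). Let $T^{\mathrm{st}}$ denote the resulting last-passage times. The memoryless property of the exponential (the LPP analogue of Burke's theorem) implies that, for every horizontal line $\{(i, N) : i \geq 1\}$, the increments $T^{\mathrm{st}}(0, (i,N)) - T^{\mathrm{st}}(0, (i-1,N))$ are i.i.d.\ $\mathrm{Exp}(1-\alpha)$, and similarly the vertical increments are i.i.d.\ $\mathrm{Exp}(\alpha)$. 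Summing vertical increments up the first column and then horizontal increments across the $m$-th row yields
\[
\mathbb{E}\, T^{\mathrm{st}}(0, (n, m)) \;=\; \frac{n}{1 - \alpha} + \frac{m}{\alpha}.
\]
A quantile coupling ($U \mapsto -\log U$, dividing by the respective rate) dominates every $\mathrm{Exp}(1)$ variable by its $\mathrm{Exp}(\lambda)$ counterpart for $\lambda < 1$, and hence $T(0, (n,m)) \leq T^{\mathrm{st}}(0, (n, m))$ almost surely. Dividing by a common scale and applying the shape theorem gives
\[
g(n, m) \;\leq\; \frac{n}{1-\alpha} + \frac{m}{\alpha} \quad \text{for every } \alpha \in (0,1),
\]
and an elementary calculus optimization pins the minimum at $\alpha^* = \sqrt{m}/(\sqrt n + \sqrt m)$ with value $(\sqrt n + \sqrt m)^2$.

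The main obstacle is the matching lower bound. The inequality above is generically strict: at a non-characteristic $\alpha$, an optimizing path for $T^{\mathrm{st}}$ uses a macroscopic number of boundary sites and $\mathbb{E}(T^{\mathrm{st}} - T) = \Theta(n+m)$. The decisive claim is that at $\alpha = \alpha^*$ the \emph{exit point}, i.e., the last boundary vertex used by the optimal $T^{\mathrm{st}}$-path, is $o(n + m)$ in probability, so that the stationary and bulk passage times agree to leading order. This is the Cator--Groeneboom-style exit-point estimate, typically proved via a second-class-particle / competition-interface argument and the remarkable fact that $\alpha^*$ is the unique $\alpha$ making the ``characteristic line'' point toward $(n, m)$. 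Granted this estimate, combining with the upper bound at $\alpha^*$ forces $g(n, m) = (\sqrt n + \sqrt m)^2$. As an alternative that avoids the exit-point analysis entirely, one may instead invoke Rost's hydrodynamic limit for TASEP from step initial condition: its rarefaction-fan density profile $\rho(\xi) = (1 - \xi)/2$ on $[-1, 1]$, inverted via the cumulative count $N(\xi t, t)/t \to (1-\xi)^2/4$ and translated through the LPP--TASEP correspondence from the preceding subsection, yields the same formula directly.
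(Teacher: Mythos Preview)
The paper does not give a self-contained proof of this theorem: it states the result and attributes it to Rost's hydrodynamic limit for TASEP, relying on the LPP--TASEP dictionary developed just before the statement. Your alternative at the end (invert the rarefaction-fan profile and translate through the correspondence) is therefore exactly the paper's route.

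Your primary approach via the stationary exponential model is a genuinely different and more modern argument. The upper bound is clean and correct: the Burke property makes $T^{\mathrm{st}}(0,(n,m))$ an exact sum of i.i.d.\ increments with mean $n/(1-\alpha)+m/\alpha$, the quantile coupling gives $T\le T^{\mathrm{st}}$ pathwise, and optimizing over $\alpha$ yields $(\sqrt n+\sqrt m)^2$. You are also right that the lower bound is where the work lies, and that the exit-point estimate at the characteristic $\alpha^*$ is the correct mechanism. Two remarks worth making explicit. First, there is no circularity: the characteristic direction $(\,(1-\alpha)^{-2},\alpha^{-2}\,)$ is read off from the stationary increment laws, not from $g$, and only the law-of-large-numbers statement ``exit point is $o(n+m)$'' is needed here, not the $O(N^{2/3})$ fluctuation bound of Bal\'azs--Cator--Sepp\"al\"ainen; the weaker statement follows from the competition-interface LLN in the stationary model. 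Second, once you have $T^{\mathrm{st}}-T=o(n+m)$ in probability at $\alpha^*$, you still need uniform integrability (or an a.s.\ version) to pass to expectations and conclude $g(n,m)\ge (\sqrt n+\sqrt m)^2$; this is routine via the exponential tails but should be said. What your approach buys over the paper's is that it stays entirely inside LPP, avoids the particle-system hydrodynamics machinery, and makes transparent why the one-parameter family $n/(1-\alpha)+m/\alpha$ is the Legendre-type envelope of the shape function.
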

Therefore the limit shape boundary in the above case is
\[
\left\{ (x,y) \in \mathbb{R}^2 : x,y \geq 0 \text{ and } \sqrt{x} + \sqrt{y} = 1 \right\}.
\]
So we can see directly that the limit shape is not a polygon, contains no flat segments, and has no ``corners.''

In the case of geometrically distributed weights with parameter $p$ \cite{CEP,JPS,S98}, there is an exact formula as well, showing
\[
g(x_1,x_2) = \frac{1}{p} (x_1 + x_2 + 2 \sqrt{x_1x_2(1-p)}).
\]
In both of these cases, finer asymptotics are available \cite{Joha} by relating the law of the passage time to the largest eigenvalue of a random matrix ensemble. Specifically for $x = (x_1,x_2)$ with $x_1>0$ and $x_2>0$,
\[
\frac{T(0,nx) - ng(x)}{n^{1/3}} \Rightarrow Z \text{ as } n \to \infty
\]
for a nondegenerate variable $Z$. This shows that the fluctuations of $T$ about $g$ are of order $n^{1/3}$, and this is expected for general distributions in both LPP and FPP in two dimensions. Unfortunately, despite decades of work, the best available bounds for $|T(0,nx) - ng(x)|$ are of order $\sqrt{n}$.

\index{rate of convergence}%
\section{Rate of convergence and scaling exponents}

In this section, we restrict to the case of FPP for simplicity, although similar theorems are provable in LPP.

\subsection{Decomposition of error and strengthened shape theorems in FPP}

The question we address here is: is it possible to improve the shape theorem to one with $\epsilon$ that depends on $t$? In other words, can we find $\epsilon_t$ such that $\epsilon_t \downarrow 0$ quickly and
\[
\mathbb{P}\left( (1-\epsilon_t)\mathcal{B} \subset B(t)/t \subset (1+\epsilon_t)\mathcal{B} \text{ for all large } t \right) = 1?
\]
This is a question about the rate of convergence in the shape theorem. Restating it in terms of the norm $g$, we can ask the question in the following form: writing $T(0,x) = g(x) + o(\|x\|)$, how large is the term $o(\|x\|)$? The standard way to study this is to decompose
\[
o(\|x\|) = \left[T(0,x) - \mathbb{E}T(0,x)\right] ~+~ \left[\mathbb{E}T(0,x) - g(x)\right]
\]
into a random fluctuation term and a 
\index{nonrandom fluctuation}%
nonrandom fluctuation term. 

The random fluctuation term is typically analyzed using techniques from concentration of measure -- there many techniques (transportation inequalities, entropy methods, influence inequalities, isoperimetry, etc.) have been developed to study deviations of a function $f(X_1, X_2, \ldots)$ of independent variables $(X_n)$ away from its mean or its median. Despite all these methods, however, current bounds on the first term are far from the predictions 
(and we will see this in the articles on fluctuations).   

The nonrandom term is purely deterministic, and can be written as 
\[
\|x\| \left[ \frac{\mathbb{E}T(0,x)}{\|x\|} - g\left( \frac{x}{\|x\|} \right) \right] \geq 0.
\]
The term in the parenthesis is of the form $\mathbb{E}T(0,nx)/n - g(x)$ for $x$ on the unit circle. We have seen that for $x \in \mathbb{Z}^d$, the sequence $(\mathbb{E}T(0,nx))$ is subadditive and so, when divided by $n$, converges. So quantifying this nonrandom error is really a problem of estimating the rate of convergence of $a_n/n$ to its limit for a subadditive sequence $(a_n)$. Unfortunately there are no general methods for this, but in the context of lattice models (like FPP), techniques have been developed to bound these nonrandom fluctuations in terms of the random ones. Specifically, if one has a concentration inequality of the form
\[
\mathbb{P}\left( T(0,x) - \mathbb{E}T(0,x) < - \lambda \|x\|^\alpha \right) \leq \exp\left( - C \lambda^{\beta}\right),~ \lambda \geq 0
\]
for constants $C,\alpha,\beta > 0$, then one can show an upper bound of the type
\[
\mathbb{E}T(0,x) - g(x) \leq C \|x\|^\alpha (\log \|x\|)^\delta,
\]
implying that nonrandom fluctuations should be no larger than random fluctuations. Indeed, using Gaussian concentration inequalities, one has the following version of Alexander's \cite{Alexander} result from Damron-Kubota \cite{DKubota}.
\begin{theorem}\label{thm: taco_taco}
Assume that $\mathbb{E}\min\{t_1, \ldots, t_d\}^2 < \infty$ and $\mathbb{P}(t_e=0)<p_c$. Then for some $C>0$ one has
\[
g(x) \leq \mathbb{E}T(0,x) \leq g(x) + C \sqrt{\|x\| \log \|x\|} \text{ for all } \|x\| > 1.
\]
\end{theorem}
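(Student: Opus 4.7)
The lower bound $g(x) \leq \mathbb{E}T(0,x)$ is immediate from Fekete's lemma applied to the subadditive sequence $a_n = \mathbb{E}T(0,nx)$: one has $g(x) = \inf_n a_n/n \leq a_1$. So the substance is the upper bound, and the plan is to combine a sharp concentration inequality with Alexander's scheme for controlling nonrandom fluctuations by random ones.

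The first ingredient is a Gaussian-type concentration inequality: under the hypotheses $\mathbb{E}\min\{t_1,\ldots,t_{2d}\}^2 < \infty$ and $\mathbb{P}(t_e=0) < p_c$, I would prove
\begin{equation*}
\mathbb{P}\bigl(|T(0,x) - \mathbb{E}T(0,x)| > \lambda\sqrt{\|x\|}\bigr) \leq C \exp(-c\lambda^2), \quad \lambda \geq 0.
\end{equation*}
I would obtain this via the martingale method: enumerate the edges $e_1,e_2,\ldots$, form the Doob martingale $M_k = \mathbb{E}[T(0,x) \mid t_{e_1},\ldots,t_{e_k}]$, and bound the squared increments using that changing $t_{e_k}$ affects $T(0,x)$ only if $e_k$ lies on some near-optimal path. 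The second-moment hypothesis enters through an averaged $L^2$ bound on a single increment after truncating by a minimum over $2d$ incident edges, which replaces Kesten's bounded-increment assumption; the hypothesis $\mathbb{P}(t_e=0)<p_c$ controls the number of edges in a geodesic by $C\|x\|$ with high probability. An Azuma--Hoeffding-type inequality for subgaussian differences then delivers the Gaussian tail.

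The second ingredient is Alexander's approximation scheme. Fix $x$ with $L := \|x\| > 1$ and a scale $N$ to be chosen. Subadditivity gives the easy direction $\mathbb{E}T(0,Nx) \leq N\,\mathbb{E}T(0,x)$ and the equally easy $\mathbb{E}T(0,Nx) \geq Ng(x)$; the content is to reverse the first inequality with an additive error. For this, consider a geodesic $\gamma^*$ from $0$ to $Nx$ and its natural decomposition into $N$ sub-paths $\gamma^*_j$ joining intermediate points $y_{j-1}, y_j$ close to $(j-1)x, jx$. Each sub-path satisfies $T(y_{j-1},y_j) \leq T(\gamma^*_j)$, while $\sum_j T(\gamma^*_j) = T(0,Nx)$. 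Applying concentration to the $T(y_{j-1},y_j)$ and picking a typical index yields some pair $(y_{j-1},y_j)$ with
\begin{equation*}
\mathbb{E}T(y_{j-1},y_j) \leq \mathbb{E}T(0,Nx)/N + O(\sqrt{L\log L}),
\end{equation*}
and translation-invariance plus an a priori moment bound on passage times over short distances lets one transfer this to a bound on $\mathbb{E}T(0,x)$. Choosing $N \asymp \log L$ and $\lambda \asymp \sqrt{\log L}$ and using $\mathbb{E}T(0,Nx)/N \to g(x)$ completes the estimate $\mathbb{E}T(0,x) \leq g(x) + C\sqrt{L\log L}$.

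The hard step is this last reverse-subadditivity transfer. The intermediate points $y_j$ are random and can fluctuate laterally on scale $\sqrt{NL}$, so $y_j - y_{j-1}$ is not literally $x$. Handling this requires a two-stage chaining: first a coarse deviation estimate confines $y_j$ to a controlled tube around $jx$, then one discretizes over a polynomial-in-$L$ collection of candidate lattice translates near $x$ and applies the Gaussian bound along with a union bound. This union bound over $\operatorname{poly}(L)$-many candidates is precisely what manufactures the logarithmic factor and is the genuine obstacle; everything else is packaging around the concentration inequality and the identity $g(x) = \inf_k \mathbb{E}T(0,kx)/k$.
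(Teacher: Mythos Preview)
The paper does not actually prove this theorem; it only states it, attributing the result to Alexander and Damron--Kubota, after a one-line sketch: a lower-tail concentration inequality of the form $\mathbb{P}(T(0,x)-\mathbb{E}T(0,x)<-\lambda\|x\|^{1/2})\le e^{-c\lambda^2}$ yields, via Alexander's method, the bound $\mathbb{E}T(0,x)-g(x)\le C\sqrt{\|x\|\log\|x\|}$. Your two-ingredient plan (Gaussian concentration under a low-moment hypothesis, then an Alexander-type transfer from random to nonrandom fluctuations) matches that sketch exactly at the level the paper gives.

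Where your write-up departs from the actual argument is in the description of ``Alexander's scheme,'' and the departure introduces a real gap. You propose to bound $\mathbb{E}T(0,x)$ by decomposing a geodesic from $0$ to $Nx$ into $N$ pieces and then invoking ``$\mathbb{E}T(0,Nx)/N\to g(x)$'' with $N\asymp\log L$. But to close the estimate you would need a \emph{quantitative} version of that convergence at scale $\|Nx\|=NL$, which is precisely the theorem you are proving, at a larger scale --- so the induction runs the wrong way. A second, related problem is your control on the intermediate points $y_j$: nothing in the hypotheses bounds geodesic wandering a priori, so the claim that $y_j-y_{j-1}$ differs from $x$ only on scale $\sqrt{NL}$ (and hence that the ``short-distance transfer'' $h(y_j-y_{j-1})\to h(x)$ costs only $O(\sqrt{L\log L})$) is unsupported.

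Alexander's actual mechanism avoids both issues by never looking at a long geodesic. Instead one proves a \emph{convex-hull approximation property}: using only the one-sided concentration bound and convexity of $g$, one shows that every $x$ lies in the convex hull of boundedly many lattice vectors $v_1,\dots,v_k$ (with $\|v_i-x\|$ controlled) for which $h(v_i)\le g(v_i)+C\sqrt{\|x\|\log\|x\|}$ can be established directly. The existence of such ``good'' $v_i$ comes from a pigeonhole/covering argument on supporting half-spaces of the $g$-ball, not from any assumed rate for $h(Nx)/N$. Subadditivity of $h$ then transfers the bound to $x$. Your intuition that a union bound over polynomially many candidates manufactures the logarithm is correct; what needs to be replaced is the geodesic-decomposition step by this convexity/skeleton argument.
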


On the other hand, evidence from \cite{ADH} implies that random fluctuations should be no larger than nonrandom fluctuations. Therefore if we posit the 
\index{scaling exponents}%
existence of exponents such that
\[
T(0,x) - \mathbb{E}T(0,x) \sim \|x\|^{\chi} \text{ and } \mathbb{E}T(0,x) - g(x) \sim \|x\|^\gamma
\]
then one should have $\chi = \gamma$. (Note that Theorem~\ref{thm: taco_taco} is a version of $\gamma \leq 1/2$. Also, since the first term is random, it may be measured as $\mathrm{Var}~T(0,x) \sim \|x\|^{2\chi}$ or in terms of a concentration inequality.) Unfortunately this is far from the state of art, as under various general assumptions (exponential moments for the passage times, for instance), the best existing bounds are
\[
0 \leq \chi \leq 1/2 \text{ and } -1/2 \leq \gamma \leq 1/2.
\]
(Under strong assumptions on existence of a fluctuation exponent $\chi < 1/2$, one can show $\chi = \gamma$ \cite{ADH}.) As we saw in the section on exactly solvable models of LPP, we believe that in two dimensions, $\chi = \gamma = 1/3$. It is reasonable to expect that these (equal) numbers decrease strictly with dimension, and approach 0 as dimension tends to infinity. Much more on the random fluctuation term and concentration estimates will be given in article \cite{ch:Sosoe} 

In summary, we have a strengthened shape theorem of the following type. Some improvements to the assumptions have been made by Tessera \cite{Tessera} and Damron-Kubota \cite{DKubota} more recently, in particular generally replacing the log with $\sqrt{\log}$. The term $\sqrt{t}$ comes from the bounds $\chi, \gamma \leq 1/2$.
\begin{theorem}[Rate of convergence bound in the shape theorem]
Assume that $\mathbb{E}e^{\alpha t_e} < \infty$ for some $\alpha > 0$. There is $C>0$ such that 
\[
\mathbb{P}\left( (t-C\sqrt{t \log t}) \mathcal{B} \subset B(t) \subset (t+C\sqrt{t \log t}) \text{ for all large } t \right) = 1.
\]
\end{theorem}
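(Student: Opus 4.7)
The plan is to decompose the error as $T(0,x)-g(x)=[T(0,x)-\mathbb{E}T(0,x)]+[\mathbb{E}T(0,x)-g(x)]$ and establish that almost surely, for all $x \in \mathbb{Z}^d$ with $\|x\|_1$ large enough,
\[
|T(0,x)-g(x)| \leq C\sqrt{\|x\|_1 \log \|x\|_1},
\]
and then convert this lattice statement to the claimed shape inclusion. Since $\mathbb{P}(t_e=0)<p_c$ is implicit in the statement (it is needed for $\mathcal B$ to be the unit ball of a norm), Kesten's theorem gives a constant $c_1>0$ with $g(x)\geq c_1\|x\|_1$, and Theorem~\ref{thm: taco_taco} directly yields the nonrandom bound $0\leq \mathbb{E}T(0,x)-g(x)\leq C\sqrt{\|x\|_1\log\|x\|_1}$.

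For the random fluctuation, the first step is a Gaussian-type concentration inequality of the form
\[
\mathbb{P}\left(|T(0,x)-\mathbb{E}T(0,x)|\geq \lambda\sqrt{\|x\|_1}\right) \leq e^{-c\lambda^2},
\]
valid for $\lambda$ in a suitable range. Under the exponential moment hypothesis $\mathbb{E}e^{\alpha t_e}<\infty$, this is Kesten's concentration estimate (proved via a martingale difference bound on a truncated geodesic, combined with an Azuma-type inequality after controlling tail contributions of the weights). Once this is in hand, I would take $\lambda = K\sqrt{\log \|x\|_1}$ for a large $K$, obtain a failure probability $\|x\|_1^{-cK^2}$, and sum over $x\in \mathbb{Z}^d$ with $2^n \leq \|x\|_1 \leq 2^{n+1}$ (there are $O(2^{nd})$ such lattice points). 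Choosing $cK^2 > d+2$ makes the total failure probability at each dyadic scale summable in $n$, so Borel--Cantelli produces the almost-sure uniform bound above.

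To pass from the passage-time bound to the shape inclusion, I would argue separately for each direction. For the outer inclusion $B(t)\subset(t+C\sqrt{t\log t})\mathcal B$, if $T(0,x)\leq t$ then the almost-sure lower bound $T(0,x)\geq g(x)-C'\sqrt{\|x\|_1\log\|x\|_1}\geq \tfrac{c_1}{2}\|x\|_1$ (valid for $\|x\|_1$ large) forces $\|x\|_1\leq 2t/c_1$; hence $g(x)\leq t+C'\sqrt{\|x\|_1\log\|x\|_1}\leq t+C''\sqrt{t\log t}$, which gives $x\in (t+C''\sqrt{t\log t})\mathcal B$. For the inner inclusion, if $g(x)\leq t-C\sqrt{t\log t}$ then $\|x\|_1\leq t/c_1$ by the norm lower bound, and
\[
T(0,x)\leq g(x)+C'\sqrt{\|x\|_1\log\|x\|_1}\leq t-C\sqrt{t\log t}+C''\sqrt{t\log t}\leq t
\]
provided $C$ is chosen large enough relative to $C''$. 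The extension from $x\in \mathbb{Z}^d$ to real $x\in \mathbb{R}^d$ is handled by the rounding convention $T(0,x)=T(0,[x])$ and the fact that $g$ is Lipschitz, which introduces only an additive constant.

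The main obstacle is the random-fluctuation step: establishing Gaussian concentration on scale $\sqrt{\|x\|_1}$ for $T(0,x)$. The difficulty is that $T(0,x)$ is a function of infinitely many i.i.d. weights whose almost-optimal path (the geodesic) is itself random, so a naive bounded-differences argument fails. One either works on a deterministic box $[-K\|x\|_1,K\|x\|_1]^d$ (controlling the rare event that some geodesic leaves it) and invokes Azuma, or uses entropy/log-Sobolev methods adapted to unbounded weights via truncation against the exponential moment. A secondary, more routine obstacle is stitching dyadic scales and passing from discrete $t$ to continuous $t$, but this follows from the monotonicity $B(s)\subset B(t)$ for $s\leq t$ and the already-obtained uniform bound in $\|x\|_1$.
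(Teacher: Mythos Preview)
Your proposal is correct and follows essentially the same approach the paper outlines in the discussion preceding the theorem: decompose $T(0,x)-g(x)$ into the random fluctuation $T(0,x)-\mathbb{E}T(0,x)$ and the nonrandom fluctuation $\mathbb{E}T(0,x)-g(x)$, control the latter by Alexander's technique (the paper's Theorem~\ref{thm: taco_taco}), and control the former by Kesten's Gaussian concentration inequality under the exponential moment assumption, then combine via Borel--Cantelli. The paper does not give a formal proof block for this theorem---it is presented as a summary of the preceding discussion---so your write-up is in fact more detailed than what appears there, but the ingredients and the architecture are the same.
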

It is reasonable to believe, as we saw above, that a stronger shape theorem may hold, with $\sqrt{t}$ replaced by $t^{\chi} = t^{\gamma}$ for the fluctuation exponents explained above.

\index{scaling exponents}%
\subsection{Scaling exponents and the 
\index{Kardar-Parisi-Zhang (KPZ)!scaling relation}%
KPZ relation}\label{sec:scale+KPZ}

There are no accepted definitions of the exponents $\chi$ and $\gamma$ from the last section. One can define, as in \cite{ADH}, directional $p$-fluctuation exponents ($p \geq 1$) in direction $x \in \mathbb{Z}^d$ as
\[
\underline{\chi}_p(x) = \liminf_{n \to \infty} \frac{\log \|T(0,nx) - \mathbb{E}T(0,nx)\|_p}{\log n}
\]
and
\[
\overline{\chi}_p(x) = \limsup_{n \to \infty} \frac{\log \|T(0,nx)- \mathbb{E}T(0,nx)\|_p}{\log n},
\]
where $\|X\|_p = \left(\mathbb{E}|X|^p\right)^{1/p}$, and 
\index{nonrandom fluctuation}%
nonrandom fluctuation exponents
\[
\underline{\gamma}(x) = \liminf_{n \to \infty} \frac{\log (\mathbb{E}T(0,nx) - g(nx))}{\log n}
\]
and
\[
\overline{\gamma}(x) = \limsup_{n \to \infty} \frac{\log (\mathbb{E}T(0,nx) - g(nx))}{\log n}.
\]
At the moment, it is not known if $\underline{\chi}_p(x) = \overline{\chi}_p(x)$ for any $p$ or $x$ generally, or if $\underline{\gamma}(x) = \overline{\gamma}(x)$. We can precisely state the bounds from the last section in terms of these exponents: it is known, combining work of Alexander \cite{Alexander}, Kesten \cite{kestenspeed}, and Auffinger-Damron-Hanson \cite{ADH} that under an exponential moment assumption (this can be weakened in various cases) that
\[
0 \leq \underline{\chi}_p(x) \leq \overline{\chi}_p(x) \leq 1/2 \text{ for all } p \geq 1 \text{ and } x \neq 0
\]
and
\[
-1 \leq \underline{\gamma}(x) \text{ and } -1/2 \leq \overline{\gamma}(x) \leq 1/2 \text{ for all } x \neq 0.
\]
Our lack of information on the model leads to other (stronger) direction-independent definitions. We will present these below while discussing the so-called KPZ scaling relation.

The last exponent we need is the ``wandering exponent,'' which measures the maximal displacement of the (random) geodesics from Euclidean straight lines. Following Chatterjee \cite{Sutav}, for any nonzero $x$, let $D(0,x)$ be the maximal Euclidean distance between any point on a geodesic (minimizing path for $T$) from $0$ to $x$ and the line segment connecting $0$ and $x$. It is reasonable to believe that
\[
\mathbb{E}D(0,x) \sim \|x\|^\xi
\]
for some (dimension-dependent) $\xi = \xi(d)$. The predictions are that in two dimensions, $\xi = 2/3$, that $\xi$ decreases with $d$ to 1/2, but might always be $>1/2$. In fact, these statements can be read directly off of the similar predictions for $\chi$, along with the conjectured 
\index{Kardar-Parisi-Zhang (KPZ)!scaling relation}%
KPZ scaling relation
\[
\chi = 2\xi - 1.
\]
This relation is expected to be ``universal.'' That is, it does not depend on the distribution of the $(t_e)$'s, and does not even depend on the dimension $d$, as long as the distribution of the $(t_e)$'s is reasonable, say with no atoms, and enough moments. There are heuristic arguments from physics for this relation in \cite{Krug}, but to date, there is no proof that is valid in full generality; that is, there is no proof under only edge-weight assumptions. One major difficulty is that there is no accepted definition of exponents which is usable. For instance, the exponents $\overline{\chi}_p$ and $\underline{\chi}_p$ defined above always exist, but unless one can show they are equal, they are not so helpful. Furthermore, the current bounds on the exponent $\xi$ depend even on which definition is taken! In Newman-Piza \cite{NP} and Licea-Newman-Piza \cite{LNP}, the only work giving general bounds on $\xi$, one has versions of
\[
\xi \geq 1/(d+1), ~ \xi \geq 1/2,~ \xi \geq 3/5,~ \text{ and } \xi \leq 3/4,
\]
depending on the definition of $\xi$. For example, the first bound is valid for a quite general definition of $\xi$, the second for a point-to-line geodesic wandering exponent, the third for a more restricted point-to-line exponent, and the fourth only in directions of ``positive curvature'' of the limiting shape. So, for instance, it is not even known at this point if
\[
\mathbb{E}D(0,ne_1) = o(n).
\]

Regardless of the precise definition, Newman-Piza provided the first rigorous argument for the inequality $\chi \geq 2\xi-1$, and it is essentially from this inequality and the known bound $\chi \leq 1/2$ that they derive $\xi \leq 3/4$. It took another 16 years for a rigorous argument, due to Chatterjee, for the other inequality, $\chi \leq 2\xi-1$, even under strong assumptions of existence of exponents. A couple of months later, a simplified proof due to Auffinger-Damron \cite{AD} appeared, which removed a technical assumption on the valid class of distributions. We stress though that these inequalities are still conditional, in that they assume existence of exponents $\chi$ and $\xi$.

Below we will give a proof sketch for the KPZ relation. We begin with Chatterjee's exponents. 
\index{scaling exponents}%
\begin{definition}
$\chi_u$ is the smallest number such that for any $\chi' > \chi_u$, there exists $\alpha>0$ such that
\[
\sup_{x \neq 0} \mathbb{E}\exp\left( \alpha \frac{|T(0,x) - \mathbb{E}T(0,x)|}{\|x\|^{\chi'}} \right) < \infty,
\]
and $\chi_\ell$ is the largest number such that for any $\chi'' < \chi_{\ell}$, one has
\[
\inf_{x \neq 0} \frac{\mathrm{Var}~T(0,x)}{\|x\|^{\chi''}} > 0.
\]
$\xi_u$ is the smallest number such that for any $\xi' > \xi_u$, there exists $\beta>0$ such that
\[
\sup_{x \neq 0} \mathbb{E}\exp\left( \beta \frac{D(0,x)}{\|x\|^{\xi'}} \right) < \infty
\]
and $\xi_\ell$ is the largest number such that for any $\xi'' < \xi_\ell$,
\[
\inf_{x \neq 0} \frac{\mathbb{E}D(0,x)}{\|x\|^{\xi''}} > 0.
\]
\end{definition}
There are two important things to notice about these definitions. First, they are not directional, as they all take supremum or infimum over all directions $x$. So, for instance, if we assume that $\chi_u = \chi_\ell$ (as we will in the theorem below), then we are assuming that random fluctuations are the same in all directions, which rules out the case of the class $\mathcal{M}_p$ from Section~\ref{sec: flat_edge} (see \cite{Zhang} for more details). For continuous distributions, though, this should be true. Next, the upper exponents are somewhat stronger than we might want, as they incorporate information about exponential concentration of the variables $T(0,x)$ and $D(0,x)$. In other words, these definitions imply (by an application of Markov's inequality for the exponential): for $\chi' > \chi_u$, there exists $C_1,C_2>0$ such that for all $x$ 
\[
\mathbb{P}\left( |T(0,x) - \mathbb{E}T(0,x)| > \|x\|^{\chi'} \right) \leq C_1\exp\left( - \|x\|^{C_2} \right),
\]
and for $\xi' > \xi_u$, there are $C_3,C_4$ such that for all $x$,
\begin{equation}\label{eq: geo_concentration}
\mathbb{P}\left( D(0,x) \geq \|x\|^{\xi'} \right) \leq C_3\exp\left( - \|x\|^{C_4} \right).
\end{equation}
In fact, by using Alexander's technique (as shown in \cite{Sutav}), the first inequality can be upgraded to
\begin{equation}\label{eq: time_concentration}
\mathbb{P}\left( |T(0,x) - g(x)| > \|x\|^{\chi'} \right) \leq C_1 \exp\left( - \|x\|^{C_2} \right).
\end{equation}

Now we state the KPZ relation that has been rigorously proved to date, combining the results of Chatterjee \cite{Sutav} and Auffinger-Damron \cite{AD}.
\index{Kardar-Parisi-Zhang (KPZ)!scaling relation}%
\begin{theorem}[Scaling relation]\label{thm: KPZ}
Assume that $\mathbb{P}(t_e=0) < p_c$. Then if $\chi := \chi_\ell = \chi_u$ and $\xi:= \xi_\ell = \xi_u$, one has $\chi = 2\xi-1$.
\end{theorem}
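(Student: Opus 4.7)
The plan is to prove the two inequalities $\chi \geq 2\xi-1$ and $\chi \leq 2\xi-1$ separately. Both arguments rest on a common geometric input: a ``directional curvature'' statement for the limit shape norm $g$ saying that, for $x$ of large norm and $h$ transverse to $x$ with $\|h\| \ll \|x\|$, one has $g(x+h)+g(x-h)-2g(x) \asymp \|h\|^2/\|x\|$. This is the uniform convexity needed to convert spatial displacements into first-order passage-time differences. As a preparatory step I will invoke Alexander's technique applied to the exponential concentration in the definition of $\chi_u$: this promotes \eqref{eq: time_concentration} to a bound of the form $|T(0,x) - g(x)| \leq \|x\|^{\chi'}$ with stretched-exponential probability, for any $\chi'>\chi_u$. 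I may therefore treat $g(x)$ and $\mathbb{E}T(0,x)$ interchangeably up to negligible errors.

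\textbf{Lower bound $\chi \geq 2\xi-1$ (Newman-Piza).} Argue by contradiction: suppose $\chi_\ell < 2\xi_\ell - 1$, pick $\alpha$ strictly between these, and set $\|h\|=\|x\|^{(\alpha+1)/2}$ so that $\|h\| < \|x\|^{\xi''}$ for some $\xi''<\xi_\ell$. Curvature then gives $g(x+h)+g(x-h)-2g(x) \geq c\|x\|^\alpha$. On the other hand, because geodesics from $0$ to $x$ wander at the scale $\|x\|^{\xi''}$ (which exceeds $\|h\|$), with positive probability the geodesic from $0$ to $x$ passes within $\|h\|$ of a point on the segment to either $x+h$ or $x-h$, so one can splice to obtain
\[
T(0,x)+T(0,x) \geq T(0,x+h)+T(0,x-h) - O(\|h\|)
\]
up to small error. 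Combining with the curvature lower bound forces $|T(0,y)-g(y)| \gtrsim \|x\|^\alpha$ for some $y \in \{x\pm h, x\}$ on a set of positive probability, so $\mathrm{Var}~T(0,y) \gtrsim \|x\|^{\alpha}$, contradicting the assumed $\chi_\ell$. This gives $\chi_\ell \geq 2\xi_\ell-1$.

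\textbf{Upper bound $\chi \leq 2\xi-1$ (Chatterjee/Auffinger-Damron).} Fix $\alpha > 2\xi_u -1$ and set $r = \|x\|^{(\alpha+1)/2}$, which exceeds $\|x\|^{\xi_u}$. Introduce an average
\[
\overline T(x) = \frac{1}{|D_r|} \sum_{y \in D_r(x)} T(0,y),
\]
where $D_r(x)$ is a disk of radius $r$ around $x$ transverse to the direction of $x$. Because the geodesics to all endpoints $y \in D_r(x)$ already fluctuate transversally on scale $\|x\|^{\xi_u} \ll r$, a pointwise passage-time difference $T(0,y)-T(0,x)$ is, up to a low-probability bad event captured by \eqref{eq: geo_concentration}, close to the deterministic difference $g(y)-g(x)$. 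Consequently $\overline T(x) - T(0,x)$ is essentially deterministic and, by curvature, equals $g(x)+\Theta(r^2/\|x\|)+\text{error}=g(x)+\Theta(\|x\|^\alpha)+\text{error}$. An Efron-Stein/martingale bound on $\mathrm{Var}\,\overline T(x)$, exploiting that only edges within an $\|x\|^{\xi_u}$-tube contribute nontrivially, then gives $\mathrm{Var}\,\overline T(x) \lesssim \|x\|^\alpha$. Transferring back to $T(0,x)$ using the deterministic relation above yields $\mathrm{Var}\,T(0,x) \lesssim \|x\|^\alpha$, so $\chi_u \leq 2\xi_u - 1$.

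\textbf{Main obstacle.} The upper bound is the hard direction, and the most delicate step is making rigorous the claim that averaging endpoints over a disk of radius $r > \|x\|^{\xi_u}$ effectively removes the ``wandering'' contribution to fluctuations. This requires: (i) the strong concentration estimate \eqref{eq: geo_concentration} to control a bad event where geodesics misbehave; (ii) a variance decomposition that attributes fluctuations to edge weights in a narrow tube of width $\|x\|^{\xi_u}$; and (iii) careful bookkeeping to show that the curvature-driven term $\|x\|^\alpha$ dominates the cross-terms when passing from the averaged quantity $\overline T(x)$ back to $T(0,x)$. The use of the assumption $\chi_\ell = \chi_u$ enters precisely here, in order to upgrade concentration estimates into matching variance bounds. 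This is the content of Chatterjee's original proof and of the simplified version of Auffinger-Damron.
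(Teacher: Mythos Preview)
Your lower bound argument has a real gap. The inequality you write,
\[
2T(0,x) \geq T(0,x+h)+T(0,x-h) - O(\|h\|),
\]
is just the triangle inequality $T(0,x\pm h) \leq T(0,x) + T(x,x\pm h)$ and uses no wandering input at all. The error term is then of order $\|h\| = \|x\|^{(\alpha+1)/2}$, which \emph{dominates} the curvature gain $\|x\|^\alpha$ since $\alpha < 2\xi-1 \leq 1$. So nothing forces $|T(0,y)-g(y)| \gtrsim \|x\|^\alpha$, and the argument collapses. The paper's (Newman--Piza) route is different and cleaner: assuming $\xi' \in ((1+\chi)/2,\xi)$, one shows $\mathbb{P}(D(0,ne_1) > n^{\xi'})$ is stretched-exponentially small, contradicting $\xi' < \xi_\ell$. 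The mechanism is that if a geodesic passes through a point $z$ at transverse distance $n^{\xi'}$, then $T(0,ne_1) = T(0,z) + T(z,ne_1)$; by the concentration \eqref{eq: time_concentration} each summand is within $n^{\chi'}$ of its $g$-value, and by positive curvature $g(z)+g(ne_1-z)-g(ne_1) \gtrsim n^{2\xi'-1}$. Since $2\xi'-1 > \chi$, this is incompatible with the upper concentration bound on $T(0,ne_1)$. No endpoint perturbation or variance lower bound is needed.

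Your upper bound sketch is closer in spirit to Chatterjee's original averaging argument than to the paper's approach, but as written it has a circular step: you claim that $T(0,y)-T(0,x)$ is close to $g(y)-g(x)$ for $y$ in a transverse disc of radius $r \gg \|x\|^{\xi_u}$, justified by geodesic localization. But the wandering bound only confines geodesics to tubes; for $y$ at transverse distance $r$, the geodesics to $x$ and to $y$ lie in essentially disjoint tubes and $T(0,y)-T(0,x)$ has fluctuations of order $\|x\|^\chi$, which is exactly what you are trying to bound. The paper's (Auffinger--Damron) argument avoids this by comparing $T=T(0,ne_1)$ to a \emph{translated} copy $T'=T(n^{\xi'}e_2,\,ne_1+n^{\xi'}e_2)$ with $\xi'>\xi$: geodesic localization then makes $T$ and $T'$ nearly independent, so $\mathrm{Var}\,T \lesssim \mathbb{E}(T-T')^2$, and the latter is bounded by splicing the two geodesics at hyperplanes at distance $n^\beta$ from the endpoints ($\xi'<\beta<1$). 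This reduces $|T-T'|$ to endpoint differences like $T(0,w)-T(n^{\xi'}e_2,w)$ with $\|w\|\asymp n^\beta$, which by concentration and an \emph{upper} curvature bound are $\lesssim n^{\beta\chi''}+n^{2\xi'-\beta}$; optimizing over $\beta$ gives $\chi\leq 2\xi-1$.
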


Before the sketch, we mention some comments. The above relation has been extended to a positive temperature model, directed polymers in a random environment \cite{AD13}. Next, curvature exponents $\kappa$ for the boundary of the limit shape have been defined in \cite{AD}, and it is believed that $\kappa = 2$. This corresponds to a boundary which is locally like Euclidean ball. For other curvature exponents, arguments from \cite{AD} suggest that a different relation holds: $\chi = \kappa \xi - (\kappa - 1)$. Last, versions of this relation have been proved in two dimensions in the exactly solvable version of LPP and other continuum models, where it is known that $\chi = 1/3$ and $\xi = 2/3$. In fact, the argument of \cite{AD} for the upper bound on $\chi$ is quite similar to the one by W\"uthrich \cite{Wuthrich} and later by Johansson \cite{Joha1}.

\begin{proof}[Sketch of proof of Theorem~\ref{thm: KPZ}]
We will give the proofs in the $e_1$ direction for simplicity. For the lower bound, suppose for a contradiction that $\xi > \frac{1+\chi}{2}$ and choose $\xi' \in \left( \frac{1+\chi}{2}, \xi \right)$. We will show that
\begin{equation}\label{eq: to_show_bound}
\mathbb{P}(D(0,ne_1) > n^{\xi'}) \leq e^{-n^c}
\end{equation}
for some $c>0$. This itself suggests a contradiction, since $\xi' < \xi$. A more careful argument would actually give $\mathbb{E} D(0,ne_1) = O(n^{\xi'})$, which contradicts $\xi' < \xi = \xi_\ell$.

To show \eqref{eq: to_show_bound}, put $L$ to be the line segment connecting $0$ and $ne_1$, and consider the set 
\[
S = \{x \in \mathbb{Z}^d : \|x - u\| < n^{\xi'} \text{ for some }u \in L\},
\]
with
\[
\partial S = \{y \in \mathbb{Z}^d \setminus S : \|y-x\|_1 = 1 \text{ for some } x \in S\}.
\]
If $D(0,ne_1) > n^{\xi'}$, then there is $z \in \partial S$ such that $z$ is on a geodesic from $0$ to $ne_1$, and so $A_z =\{T(0,ne_1) = T(0,z) + T(z,ne_1)\}$ occurs. Therefore
\[
\mathbb{P}(D(0,ne_1) > n^{\xi'}) \leq \sum_{z \in \partial S} \mathbb{P}(A_z).
\]
One can now show that there is $c>0$ depending on $\chi'$ such that
\begin{equation}\label{eq: wandering_bound}
\mathbb{P}(A_z) \leq e^{-n^c} \text{ for all } z \in \partial S.
\end{equation}
Summing over all $z$, we then obtain \eqref{eq: to_show_bound}.

\begin{figure}[!ht]
\centering
\includegraphics[width=4in,trim={.5cm 18cm 6.5cm 6cm},clip]{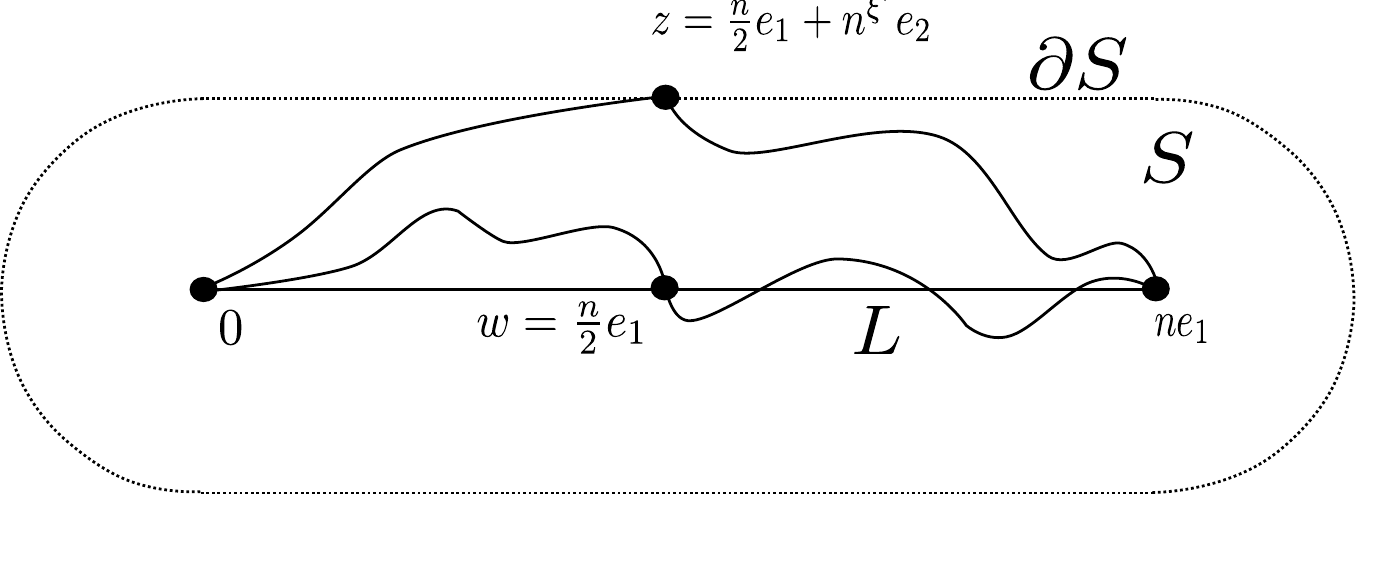}
\caption{Illustration of the argument for the inequality $\chi \geq 2\xi-1$. $S$ is the set of vertices within distance $n^{\xi'}$ of $L$, the line segment connecting $0$ and $ne_1$, and its boundary $\partial S$ contains $z$. The idea is to show that with high probability, the passage time from $0$ to $ne_1$ through $z$ is larger than the passage time through $(n/2)e_1$. When this occurs, $z$ cannot be on a geodesic from $0$ to $ne_1$.}
\label{fig: fig_3}
\end{figure}

We will not show \eqref{eq: wandering_bound} for all $z$, but only for $z = (n/2)e_1 + n^{\xi'}e_2$, which may not be an integer point, but we will pretend it is, to avoid notation. In this case, if $A_z$ occurs, we write
\begin{align}
0 &\geq T(0,ne_1) - T(0,(n/2)e_1) - T((n/2)e_1,ne_1) \nonumber \\
&= \left[ T(0,z) - T(0,(n/2)e_1) \right] + \left[ T(z,ne_1)-T((n/2)e_1,ne_1) \right] \label{eq: taco_1}.
\end{align}
(See Figure~\ref{fig: fig_3}.) These bracketed terms have the same distribution, so we will only bound one of them. Write $w=(n/2)e_1$ and estimate
\begin{equation}\label{eq: taco_taco}
T(0,z) - T(0,w) = g(z)-g(w) + \left[ T(0,z) - g(z) \right] + \left[ T(0,w) - g(w) \right].
\end{equation}
To estimate $g(z)-g(w)$, we write it as
\[
g\left( (n/2)e_1+n^{\xi'}e_2 \right) - g((n/2)e_1) = \frac{n}{2} \left[ g\left( e_1 + \frac{2n^{\xi'}}{n} e_2 \right) - g(e_1) \right].
\]
Here we must use information about the limit shape, namely that in certain directions it is known to have 
\index{curvature}%
``positive curvature.'' It is known (see \cite{Sutav}) that there exists a point on the limit shape boundary near which the boundary is locally positively curved, and instead of doing the argument in that direction, we assume this condition in direction $e_1$. It amounts to the statement that there exists $\delta,C_1>0$ such that if $v$ is a vector with $\|v\| < \delta$ and $v \perp e_1$, then
\[
g(e_1 + v) - g(e_1) \geq C_1\|v\|^2.
\]
We apply this in the above equation with $v = \frac{2n^{\xi'}}{n} e_2$. This is less than $\delta$ in norm for $n$ large because we may assume that $\xi' < 1$. If not, then $\xi \geq 1$, and in this case, the KPZ inequality reads $\chi \geq 1$, which is false, as known exponential concentration bounds imply that $\chi \leq 1/2$. So we obtain
\[
g(z) - g(w) \geq C_1 \frac{n}{2} \left( \frac{2n^{\xi'}}{n} \right)^2 = C_2 n^{2\xi' -1}.
\]
Returning to \eqref{eq: taco_taco}, we obtain
\[
T(0,z) - T(0,w) \geq C_2n^{2\xi'-1} + [T(0,z) - g(z)] + [T(0,w)-g(w)].
\]

The same development works for the other term of \eqref{eq: taco_1}, and we find that if $A_z$ occurs, then
\begin{align*}
0 
&\geq [T(0,z) - g(z)] + [T(0,w) - g(w)] + [T(z,ne_1)-g(ne_1-z)]\\
&\qquad+ [T(w,ne_1) - g(ne_1-w)] + 2C_2n^{2\xi'-1}.
\end{align*}
This means that at least one of the four bracketed terms is at least $(C_2/2) n^{2\xi'-1}$ in absolute value. Thus using symmetry,
\[
\mathbb{P}(A_z) \leq 2\mathbb{P}(|T(0,z) - g(z)| \geq (C_2/2) n^{2\xi'-1}) + 2\mathbb{P}(|T(0,w) - g(w)| \geq (C_2/2)n^{2\xi'-1}).
\]
By our choice of $\xi'$, we have $2\xi'-1 > \chi$. We have assumed exponential concentration above scale $\chi'$ (see \eqref{eq: time_concentration}), so these probabilities are (stretched) exponentially small in $n$. In other words, each one is smaller than $e^{-n^c}$ for some $c>0$. This shows \eqref{eq: wandering_bound} and completes the sketch of the bound $\chi \geq 2\xi-1$.


We turn to the other inequality, $\chi \leq 2\xi-1$. For technical reasons, we assume that $\chi>0$; the other case can be proved using a different argument \cite{Sutav}. Suppose it is false and chose $\chi', \chi'', \xi'$ such that
\begin{equation}\label{eq: chi_prime}
2\xi-1 < 2\xi'-1 < \chi' < \chi < \chi''.
\end{equation}
We define the variable $\delta T$, which first appeared in Licea-Newman-Piza \cite{LNP}: $\delta T = T - T'$, where
\[
T = T(0,ne_1) \text{ and } T' = T(n^{\xi'}e_2, ne_1 + n^{\xi'}e_2).
\]
Because $\xi' > \xi$, these passage times are nearly independent, as they are with high probability equal to two passage times restricted to disjoint sets of edges (``tubes'' of width $n^{\xi'}/2$ centered on the straight lines connecting their endpoints). Using the exponential concentration assumption on $D(0,ne_1)$ (from \eqref{eq: geo_concentration}), we then obtain if $T''$ is an independent copy of $T$,
\[
\mathrm{Var}~T = \frac{1}{2} \mathbb{E}(T-T'')^2 \leq C_3 \mathbb{E}(T-T')^2.
\]
Since $\chi' < \chi$, we find
\begin{equation}\label{eq: half_inequality}
n^{2\chi'} \leq C_4 \mathbb{E}(T-T')^2.
\end{equation}

\begin{figure}[!ht]
\centering
\includegraphics[width=4in,trim={.5cm 16cm 4cm 2cm},clip]{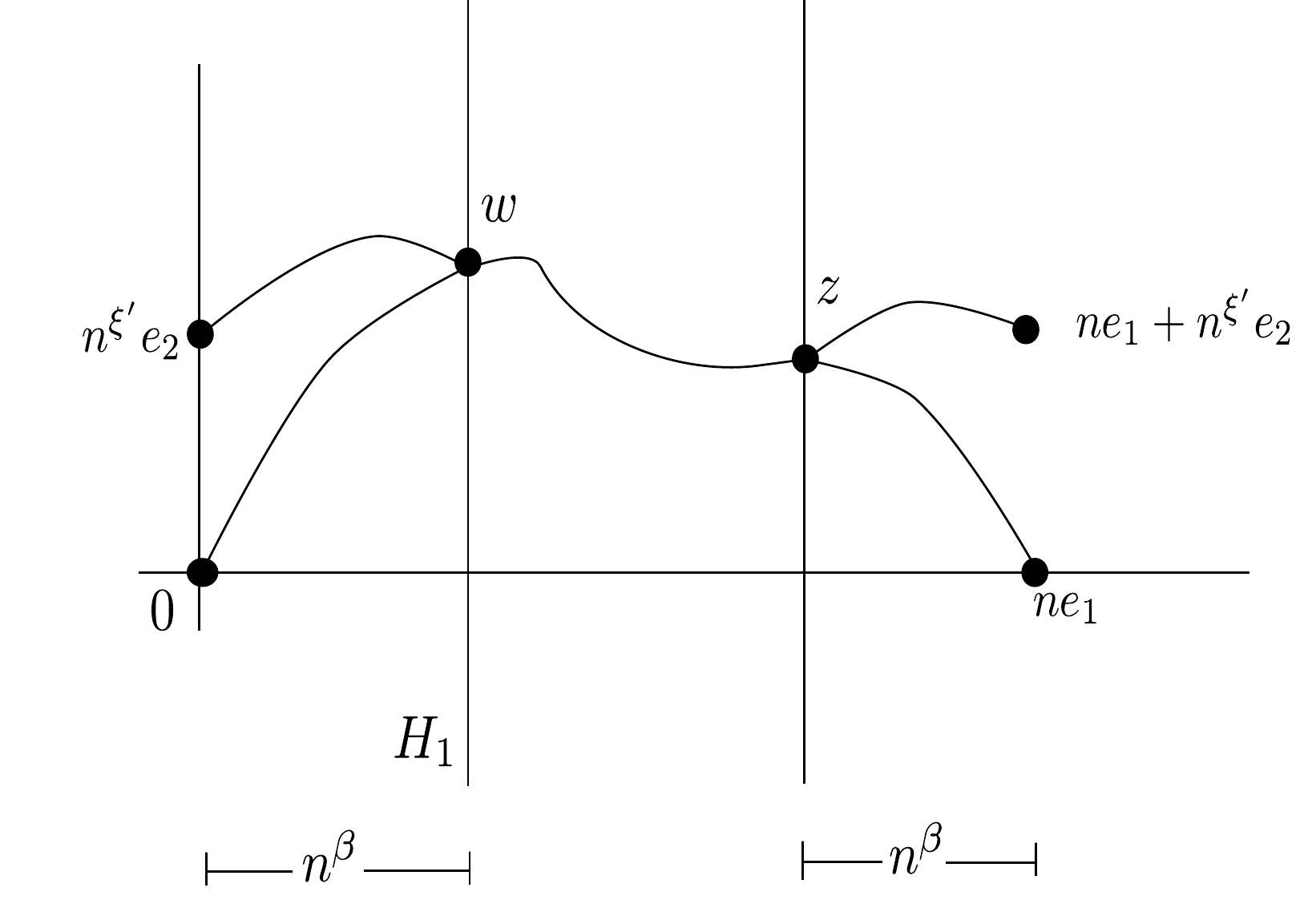}
\caption{Illustration of the argument for the inequality $\chi \leq 2\xi-1$. The path that connects $n^{\xi'}e_2$ to $ne_1 + n^{\xi'}e_2$ through $w$ (its first intersection with $H_1$) and $z$ is a geodesic. The path that connects $0$ to $ne_1$ through $w$ and $z$ is a possibly suboptimal path, and this produces inequality \eqref{eq: inequality_oh}.}
\label{fig: fig_4}
\end{figure}

The next step is to upper bound $(T-T')^2$. Let $w$ be the first intersection of a geodesic from $n^{\xi'} e_2$ to $n^{\xi'}e_2 + ne_1$ with the hyperplane $H_1 = \{y : y \cdot e_1 = n^{\beta}\}$ (for some $\beta < 1$ but very close to 1), and let $z$ be the first intersection of this geodesic with the hyperplane $\{y : y \cdot e_1 = n-n^{\beta}\}$. Note that
\begin{align*}
T = T(0,ne_1) &\leq T(0,w) + T(w,z) + T(z,ne_1) \\
&= T(0,w) - T(n^{\xi'}e_2,w) + T(n^{\xi'}e_2, ne_1 + n^{\xi'}e_2)\\
&\qquad + T(z,ne_1) - T(z,ne_1+n^{\xi'}e_2),
\end{align*}
so that
\begin{equation}\label{eq: inequality_oh}
T-T' \leq \left[ T(0,w) - T(n^{\xi'}e_2,w) \right] + \left[T(z,ne_1) - T(z,ne_1+n^{\xi'}e_2) \right].
\end{equation}
With (exponentially) high probability, $w$ and $z$ are not further than distance $C_5 n^{\xi'}$ from the $e_1$-axis, since $\xi' > \xi$ (see \eqref{eq: geo_concentration}). So using symmetry, we obtain with high probability
\[
T-T' \leq A+B,
\]
where $A$ and $B$ have the same distribution as
\[
D = \max\left\{ |T(0,w) - T(n^{\xi'}e_2,w)| : w \in H_1, \|w\| \leq C_5 n^{\xi'} \right\}.
\]
Reversing the roles of $T$ and $T'$, we obtain the same inequality for $T'-T$, and so
\[
\mathbb{E}(T-T')^2 \leq C_6\mathbb{E}D^2.
\]

We last have to bound $\mathbb{E}D^2$. This is a maximum over many different passage times (to all $w \in H_1$ with $\|w\| \leq C_5 n^{\xi'}$). However, since we have assumed that the fluctuation exponent $\chi$ exists in a strong sense (there is exponential concentration --- see \eqref{eq: time_concentration}), it is possible to replace this maximum with simply one passage time, and $\mathbb{E}D^2$ will increase by only a logarithmic factor. Thus we can write
\begin{equation}\label{eq: other_half_inequality}
\mathbb{E}(T-T')^2 \leq C_7 (\log n) \mathbb{E}\left[ T(0,w) - T(n^{\xi'}e_2,w) \right]^2,
\end{equation}
where $w = n^\beta e_1 + n^{\xi'}e_2$. As in the proof of the other inequality, we decompose this difference as
\begin{align*}
T(0,w) - T(n^{\xi'}e_2,w) &= [T(0,w) - \mathbb{E}T(0,w)] - [T(n^{\xi'}e_2,w) - \mathbb{E}T(n^{\xi'}e_2,w)] \\
&+ [\mathbb{E}T(0,w) - g(w)] - [\mathbb{E}T(n^{\xi'}e_2,w) - g(w-n^{\xi'}e_2)] \\
&+ g(w) - g(w-n^{\xi'}e_2).
\end{align*}
And once again, our exponential concentration assumption \eqref{eq: time_concentration} allows us to upper bound all the terms in the first two lines (with high probability) by $\|w\|^{\chi''}$. If $\xi \geq 1$, then our main inequality $\chi \leq 2\xi-1$ is simply $\chi \leq 2\xi -1$, where $2\xi-1$ is $\geq 1$, and we already know this to be true (as $\chi \leq 1/2$), so we can assume that $\xi < 1$. In this case, we can also enforce
\begin{equation}\label{eq: xi_prime}
\xi < \xi' < \beta < 1,
\end{equation}
and we obtain that $\|w\| = \|n^\beta e_1 + n^{\xi'}e_2\| \leq C_8 n^\beta$. Therefore from \eqref{eq: half_inequality} and \eqref{eq: other_half_inequality},
\begin{equation}\label{eq: equality_so_far}
n^{2\chi'} \leq C_9 (\log n) \left( C_{10}n^{\beta \chi''} + g(w) - g\left( w-n^{\xi'} e_2 \right)\right)^2.
\end{equation}
Again, we analyze the difference in $g$ by mandating a curvature assumption. We calculate
\begin{align*}
g(w) + g(w-n^{\xi'}e_2) &= g\left( n^\beta e_1 + n^{\xi'}e_2\right) - g(n^\beta e_1) \\
&= n^\beta \left( g\left( e_1 + n^{\xi'-\beta}e_2\right) - g(e_1) \right).
\end{align*}
Our curvature condition here is the opposite as in the previous inequality $\chi \leq 2\xi-1$. That is, we assume that there are $C_{11},\delta>0$ such that if $u$ satisfies $\|u\| < \delta$ and $u \perp e_1$, then
\[
g(e_1 + u) - g(e_1) \leq C_{11}\|u\|^2.
\]
Fortunately since the limit shape is convex, one can show that this inequality holds in almost every direction, so we will assume it in the $e_1$ direction, as it is written. Since $\xi' < \beta$, the term $n^{\xi'-\beta} < \delta$ for large $n$, and we obtain
\[
g(w) - g(w-n^{\xi'}e_2) \leq C_{11} n^\beta n^{2\xi'-2\beta'} = C_{11}n^{2\xi'-\beta}.
\]
Last we plug this back into \eqref{eq: equality_so_far} for
\[
n^{2\chi'} \leq C_9 (\log n) \left( C_{10} n^{2\beta \chi''} + C_{11} n^{2(2\xi'-\beta)} \right).
\]
This is true for all $n$ large, so
\[
2\chi' \leq \max\left\{ 2\beta \chi'', 2(2\xi'-\beta)\right\}.
\]
This holds for all $\beta, \chi', \chi'', \xi'$ satisfying \eqref{eq: chi_prime} and \eqref{eq: xi_prime}.

So take $\chi'' \downarrow \chi$ and $\chi' \uparrow \chi$ for fixed $\beta, \xi'$ for
\[
2\chi \leq \max\{2\beta \chi, 2(2\xi' -\beta)\}.
\]
As $\beta< 1$, we find
\[
\chi \leq 2\xi'-\beta.
\]
Now take $\beta \uparrow 1$ and $\xi' \downarrow \xi$ to obtain $\chi \leq 2\xi - 1$.
\end{proof}

\bigskip
\noindent
{\bf Acknowledgements.} I thank F.\ Rassoul-Agha for discussions about the subadditive ergodic theorem, and for the notes on the proof from his course that appeared in this article. 

\bibliographystyle{amsplain}

\begin{thebibliography}{1}

\bibitem{Alexander} 
K. Alexander. (1997). Approximation of subadditive functions and convergence rates in limiting-shape results. {\it Ann. Probab.}, 1, 30-55.

\bibitem{AD12}  
A. Auffinger. and M. Damron. (2013). Differentiability at the edge of the limit shape and related results in first passage percolation. {\it Probab. Theory Relat. Fields}, 156, 193-227.

\bibitem{AD13}
A. Auffinger and M. Damron. (2013). The scaling relation $\chi = 2\xi-1$ for directed polymers in a random environment. {\it ALEA Lat. Am. J. Probab. Math. Stat.}, 10, 857-880.

\bibitem{AD} 
A. Auffinger and M. Damron. (2014). A  simplified proof of the relation between scaling exponents in first-passage percolation. \textit{Ann. Probab.}, 42, 1197-1211. 

\bibitem{ADH} 
A. Auffinger, M. Damron, and J. Hanson. (2015). Rate of convergence of the mean for sub-additive ergodic sequences. {\it Adv. Math.}, 285, 138-181.

\bibitem{ADH15}
A. Auffinger, M. Damron, and J. Hanson. (2017). 50 years of first-passage percolation. 
University Lecture Series, 68. {\it American Mathematical Society, Providence, RI}. 

\bibitem{AT}
A. Auffinger and S. Tang. (2015). On the time constant in high dimensional first passage percolation. {\it Electron. J. Probab.}, 24, 1-23.

\bibitem{B90}
D. Boivin. (1990). First passage percolation: the stationary case. {\it Probab. Theory Relat. Fields}, 86, 491-499.

\bibitem{Sutav} 
S. Chatterjee. (2013). The universal relation between scaling exponents in first-passage percolation. {\it Ann. Math.}, 177, 663-697.

\bibitem{CEP}
H. Cohn, N. Elkies, and J. Propp. (1996). Local statistics for random domino tilings of the Aztec diamond. {\it Duke. Math. J.}, 85, 117-166.

\bibitem{CEG11} 
O. Couronn\'{e}., N. Enriquez, and L. Gerin. (2011). Construction of a short path in high-dimensional first passage percolation. {\it Electron. Commun. Probab.}, 16, 22-28.

\bibitem{CoxDurrett} 
J. T. Cox and R. Durrett. (1981). Some limit theorems for percolation with necessary and sufficient conditions. {\it Ann. Probab.}, 9, 583-603.

\bibitem{DH2}
M. Damron and J. Hanson. (2014). Busemann functions and infinite geodesics in two-dimensional first-passage percolation. {\it Commun. Math. Phys.}, 325, 917-963.

\bibitem{DH}
M. Damron and M. Hochman. (2013). Examples of non-polygonal limit shapes in i.i.d. first-passage percolation and infinite coexistence in spatial growth models. {\it Ann. Appl. Probab.}, 23, 1074-1085.

\bibitem{DKubota} 
M. Damron and N. Kubota. (2014). Rate of converge in first-passage percolation under low moments. {\it Stoch. Proc. Appl.}, 126, 3065-3076.

\bibitem{Dhar} 
D. Dhar. (1986). Asymptotic shape of Eden clusters, {\it On growth and form. Ed. H. E. Stanley and N. Ostrowsky, Martinus Nijhoff.}, 288-292.

\bibitem{Durrett} 
R. Durrett. (1984). Oriented percolation in two dimensions. {\it Ann. Probab.}, 12, 999-1040.

\bibitem{DurrettBook}
R. Durrett. Probability: theory and examples. Fourth edition. Cambridge Series in Statistical and Probabilistic Mathematics, 31. {\it Cambridge University Press, Cambridge}, 2010. x+428 pp. ISBN: 978-0-521-76539-8.

\bibitem{DurrettLiggett} 
R. Durrett and T. Liggett. (1981). The shape of the limit set in Richardson's growth model. {\it Ann. Probab.}, 9, 186-193.

\bibitem{Eden}
M. Eden. A two-dimensional growth process. 1961 {\it Proc. 4th Berkeley Sympos. Math. Statist. and Prob., Vol IV,} 223-239, {\it Univ. California Press, Berkeley, Calif.}

\bibitem{grimmett}
G. Grimmett. (1999). {\it Percolation}. 2nd edition. {\it Springer, Berlin.}


\bibitem{HM} 
O. H\"aggstr\"om and  R. Meester. (1995). Asymptotic shapes for stationary first passage percolation. {\it Ann. Probab.}, 23, 1511-1522. 

\bibitem{HW}
J. Hammersley and D. Welsh. First-passage percolation, subadditive processes, stochastic networks, and generalized renewal theory. 1965 {\it Proc. Internat. Res. Semin., Statist. Lab., Univ. California, Berkeley, Calif.,} 61-110, {\it Springer-Verlag, New York.}

\bibitem{JLS}
D. Jerison, L. Levine, and S. Sheffield. (2012). Logarithmic fluctuations for internal DLA. {\it J. Amer. Math. Soc.}, 25, 271-301.

\bibitem{JPS}
W. Jockusch, J. Propp, and P. Shor. (1998). Random domino tilings and the arctic circle theorem. preprint available at {\it arXiv:9801068}.

\bibitem{Joha} 
K. Johansson. (2000). Shape fluctuations and random matrices. {\it Commun. Math. Phys.}, 209, 437-476.

\bibitem{Joha1} 
K. Johansson. (2000). Transversal fluctuations for increasing subsequences on the plane. {\it Probab. Theory Relat. Fields}, 116, 445-456.

\bibitem{aspects}
H. Kesten. (1986). Aspects of first passage percolation. {\it \'Ecole d'\'Et\'e de Probabilit\'es de Saint Flour XIV,} Lecture Notes in Mathematics, 1180, 125-264.

\bibitem{kestenspeed}
H. Kesten. (1993). On the speed of convergence in first-passage percolation. {\it Ann. Appl. Probab.}, 3, 296-338.

\bibitem{Krug} 
J. Krug. (1987). Scaling relation for a growing surface. {\it Phys. Rev. A}, 36, 5465-5466.

\bibitem{LBG}
G. F. Lawler, M. Bramson, and D. Griffeath. (1992). Internal diffusion limited aggregation. {\it Ann. Probab.}, 20, 2117-2140.

\bibitem{LNP} 
C. Licea, C. Newman, and M. Piza. (1996). Superdiffusivity in first-passage percolation. {\it Probab. Theory Relat. Fields} 106, 559-591.

\bibitem{LiggettST} 
T. Liggett. (1985). An Improved Subadditive Ergodic Theorem. {\it  Ann. Probab.}, 13, 1279-1285.

\bibitem{Marchand} 
R. Marchand. (2002). Strict inequalities for the time constant in first passage percolation. {\it Ann. Appl. Probab.}, 12, 1001-1038.

\bibitem{Martin}
J. B. Martin. (206). Last-passage percolation with general weight distribution. {\it Markov Process. Related Fields}, 12, 273-299.

\bibitem{Martinboundary}
J. B. Martin. (2004). Limiting shape for directed percolation models. {\it Ann. Probab.}, 32, 2908-2937.

\bibitem{Martinsson}
A. Martinsson. (2015). First-passage percolation on cartesian power graphs. {\it To appear in Ann. Probab}.

\bibitem{MD}
P. Meakin and J. M. Deutch. (1986). The formation of surfaces by diffusion-limited annihilation. {\it J. Chem. Phys.}, 85, 2320.

\bibitem{NP} 
C. Newman and M. Piza. (1995). Divergence of shape fluctuations in two dimensions. {\it Ann. Probab.}, 23, 977-1005.

\bibitem{Richardson} 
D. Richardson. (1973). Random growth in a tessellation. {\it Proc. Cambridge Philos. Soc.}, 74, 515-528.

\bibitem{Rost} 
H. Rost. (1981). Nonequilibrium behaviour of a many particle process: density profile and local equilibria. {\it Z. Wahrsch. Verw. Gebiete}, 58, 41-53.

\bibitem{S98}
T. Sepp\"al\"ainen. (1998). Hydrodynamic scaling, convex duality and asymptotic shapes of growth models. {\it Markov Process. Related Fields}, 4, 1-26.

\bibitem{ch:Sosoe}
P. Sosoe. (2018). Fluctuations in first-passage percolation. {\it Proc. Amer. Math. Soc.}, To appear.

\bibitem{Tessera} 
R. Tessera. (2014). Speed of convergence in first passage percolation and geodesicity of the average distance. {\it To appear in Annals de l'Institut Henri Poincar\'e}.

\bibitem{WS}
T. A. Witten Jr and L. M. Sander. (1981). Diffusion-Limited Aggreation, a kinetic critical phenomenon. {\it Phys. Rev. Lett.}, 47, 1400.

\bibitem{Wuthrich} 
M. V. W\"uthrich. (1998). Scaling identity for crossing Brownian motion in a Possonian potential. {\it Probab. Theory Relat. Fields,} 112, 299-319.

\bibitem{Zhangsuper} Y. Zhang. (1995). Supercritical behaviors in first-passage percolation. {\it Stoch. Proc. Appl.}, 59, 251-266.

\bibitem{Zhang} 
Y. Zhang. (2008). Shape fluctuations are different in different directions. {\it Ann. Probab.}, 36, 331-362.

\end{thebibliography}

\end{document}